\def\NAT@def@citea{\def\@citea{\NAT@separator}}
\theoremstyle{plain}
\newtheorem{theorem}{Theorem}[section]
\newtheorem{lemma}[theorem]{Lemma}
\newtheorem{corollary}[theorem]{Corollary}
\newtheorem{proposition}[theorem]{Proposition}
\theoremstyle{definition}
\newtheorem{definition}[theorem]{Definition}
\newtheorem{example}[theorem]{Example}
\theoremstyle{remark}
\newtheorem{remark}[theorem]{Remark}
\renewcommand {\theenumi} {\rm\roman{enumi}}
\newcommand{\bx}{\bar x}
\newcommand{\eps}{\varepsilon}
\newcommand {\B} {\mathbb B}
\def\es{\emptyset}
\def\Fr{Fr\'echet}
\def\EVP{Ekeland variational principle}
\newcommand{\vertiii}[1]{\left\vert\kern-0.25ex\left\vert\kern-0.25ex\left\vert #1\right\vert\kern-0.25ex\right\vert\kern-0.25ex\right\vert}
\newcommand{\qdtx}[1]{\quad\mbox{#1}\quad}
\newcommand{\folgt}{$\Rightarrow$}
\newcommand {\R} {\mathbb R}
\newcommand {\N} {\mathbb N}
\newcommand{\iv}{^{-1} }
\newcommand {\diam} {{\rm diam}\,}
\newcommand{\ang}[1]{\left\langle #1 \right\rangle}
\newcommand {\dom} {{\rm dom}\,}
\newcommand {\epi} {{\rm epi}\,}
\newcommand {\sd} {\partial}
\newcommand{\de}{\delta}
\newcommand{\al}{\alpha}
\newcommand{\be}{\beta}
\newcommand{\ga}{\gamma}
\newcommand{\la}{\lambda}
\renewcommand{\iff}{$\Leftrightarrow$}
\newcommand{\AND}{\quad\mbox{and}\quad}
\newcommand{\abs}[1]{\left\vert#1\right\vert}
\newcommand{\norm}[1]{\left\Vert#1\right\Vert}
\def\lsc{lower semicontinuous}
\newcommand {\Int} {{\rm int}\,}
\newcommand{\NDC}[1]{\todo[inline,color=green!40]{NDC {#1}}}
\begin{document}

\title{
Sequential Extremal Principle and Necessary Conditions for Minimizing Sequences}

\author{
\name{Nguyen Duy Cuong\textsuperscript{a}, Alexander Y. Kruger\textsuperscript{b}}
\thanks{CONTACT Alexander Y. Kruger. Email: alexanderkruger@tdtu.edu.vn}
\thanks{Dedicated to Prof Michel Th\'era, a great scholar and friend}
\affil{\textsuperscript{a} Department of Mathematics, College of Natural Sciences, Can Tho University, Can Tho {City}, Vietnam;
\textsuperscript{b}~Optimization Research Group,
Faculty of Mathematics and Statistics,
Ton Duc Thang University, Ho Chi Minh City, Vietnam}
}

\maketitle

\begin{abstract}
The conventional definition of extremality of a finite collection of sets
is extended by replacing a fixed point (extremal point) in the intersection of the sets by a collection of sequences of points in the individual sets with the distances between the corresponding points tending to zero.
This allows one to consider collections of unbounded sets with empty intersection.
{Building on the ideas of} the conventional extremal principle, we derive an extended sequential version of the latter result in terms of Fr\'echet and Clarke normals.
Sequential versions of the related concepts of stationarity, approximate stationarity and transversality of collections of sets are also studied.
As an application, we establish sequential necessary conditions for minimizing (and more general firmly stationary, stationary and approximately stationary) sequences in a constrained optimization problem.

\end{abstract}

\begin{keywords}
extremal principle; separation; stationarity; transversality; optimality conditions
\end{keywords}

\begin{amscode}
49J52; 49J53; 49K40; 90C30; 90C46
\end{amscode}

\setcounter{tocdepth}{2}

\section{Introduction}

We continue studying extremality and stationarity properties of collections of sets and the corresponding generalized separation statements in the sense of the \emph{extremal principle} \cite{KruMor80,MorSha96,Mor06.1}.
Such statements are widely used in variational analysis and naturally translate into necessary
optimality conditions (multiplier rules) and various subdifferential and coderivative calculus results in nonconvex settings \cite{KruMor80,Kru85.1,MorSha96,Kru03,Mor06.1,Mor06.2,BorZhu05}.

Throughout the paper we consider a collection of $n>1$ arbitrary nonempty subsets $\Omega_1,\ldots,\Omega_n$ of a normed space $(X,\|\cdot\|)$ and write $\{\Omega_1,\ldots,\Omega_n\}$ to denote the collection of these sets as a single object.

The conventional extremal principle gives necessary conditions for the \emph{extremality} of a collection of sets.

\begin{definition}
[Extremality]
\label{D1.1}
The collection $\{\Omega_1,\ldots,\Omega_n\}$
is extremal at $\bx\in\bigcap_{i=1}^n\Omega_i$ if there is a $\rho\in(0,+\infty]$ such that,
for any $\varepsilon>0$, there exist $a_1,\ldots,a_n\in\eps\B_X$ such that
\begin{gather}
\label{D1.1-2}
\bigcap_{i=1}^n(\Omega_i-a_i)\cap B_\rho(\bx)=\emptyset.
\end{gather}
\end{definition}

Here, symbols $\B_X$ and $B_\rho(\bx)$ denote the open unit ball in $X$ and open ball with centre $\bx$ and radius $\rho$, respectively.
For brevity, we combine in the above definition the cases of local ($\rho<+\infty$) and global ($\rho=+\infty$) extremality.
In the latter case, the point $\bx$ plays no role apart from ensuring that $\bigcap_{i=1}^n\Omega_i\ne\es$.

The extremal principle gives necessary conditions for extremality in terms of elements of the (topologically) dual space $X^*$, and assumes that $X$ is Asplund and the sets are closed.

\begin{theorem}
[Extremal principle]
\label{EP}
Let $X$ be Asplund, and $\Omega_1,\ldots,\Omega_n$ 
be closed.
If $\{\Omega_1,\ldots,\Omega_n\}$
is extremal at $\bx\in\bigcap_{i=1}^n\Omega_i$, then,
for any $\varepsilon>0$, there exist $x_i\in\Omega_i\cap B_\eps(\bx)$ and $x_i^*\in N^F_{\Omega_i}(x_i)$
$(i=1,\ldots,n)$ such that
\begin{gather}
\label{EP-3}
\Big\|\sum_{i=1}^nx_i^*\Big\|<\eps,\quad
\sum_{i=1}^{n}\|x_i^*\|=1.
\end{gather}
\end{theorem}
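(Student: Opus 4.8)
The plan is to convert the emptiness of the shifted intersection into a separation statement through a product-space penalization, an application of a variational principle, and a final normalization. Fix $\eps>0$; shrinking it if necessary, Definition~\ref{D1.1} supplies shifts $a_1,\dots,a_n\in\eps\B_X$ with $\bigcap_{i=1}^n(\Omega_i-a_i)\cap B_\rho(\bx)=\es$. Write $C_i:=\Omega_i-a_i$ (closed, since $\Omega_i$ is). The emptiness is the one quantitative input: no single point $z\in B_\rho(\bx)$ can lie in every $C_i$.

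Next I would pass to the product space $X^n$ and consider the lower semicontinuous function $f=g+\iota_{\prod_i C_i}$, where $g(x_1,\dots,x_n)=\big(\sum_{i=1}^{n-1}\|x_i-x_n\|^2\big)^{1/2}$ measures the spread of the components and $\iota$ is the indicator of the product set, with the $x_n$-coordinate constrained to $B_\rho(\bx)$. At the feasible reference point $x_i^0:=\bx-a_i\in C_i$ one has $g=O(\eps)$, while $g(x)=0$ would force all components to coincide at some $z\in\bigcap_i C_i\cap B_\rho(\bx)=\es$; hence $f>0$ on its domain and the reference point is an $O(\eps)$-approximate minimizer. Applying a smooth (\Fr) variational principle then yields a \Fr-smooth perturbation $\psi$ with $\|\psi\|_\infty$ and $\|\psi'\|_\infty$ small, such that $f+\psi$ attains a genuine minimum at a point $\hat x=(\hat x_1,\dots,\hat x_n)$ close to the reference point, and crucially with $g(\hat x)>0$, so $g$ is locally smooth at $\hat x$ and its kink at the origin is avoided.

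The decisive structural feature is that $g$ depends only on the differences $x_i-x_n$, hence is invariant under a common translation of all coordinates, so $\sum_{i=1}^n\nabla_{x_i}g(\hat x)=0$. Peeling the smooth summands $\psi$ and $g$ off exactly at the minimizer, and using that the \Fr\ normal cone to a product splits as $\hat\partial\iota_{\prod_iC_i}(\hat x)=\prod_iN^F_{C_i}(\hat x_i)$, I obtain $x_i^*:=-\nabla_{x_i}\psi(\hat x)-\nabla_{x_i}g(\hat x)\in N^F_{C_i}(\hat x_i)=N^F_{\Omega_i}(\hat x_i+a_i)$. Setting $x_i:=\hat x_i+a_i\in\Omega_i\cap B_\eps(\bx)$, translation invariance gives $\sum_i x_i^*=-\sum_i\nabla_{x_i}\psi(\hat x)$, whose norm is controlled by $\|\psi'\|_\infty$ and is therefore $<\eps$; since $g(\hat x)>0$ its gradient is nonzero, so $\sum_i\|x_i^*\|$ is bounded below, and rescaling all $x_i^*$ by the common factor $\theta=\big(\sum_i\|x_i^*\|\big)^{-1}$ (which keeps each in the cone $N^F_{\Omega_i}(x_i)$ and does not increase $\|\sum_i x_i^*\|$) enforces $\sum_i\|x_i^*\|=1$. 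This is exactly \eqref{EP-3}.

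The main obstacle is the phrase ``$g$ is locally smooth'': the norm of a general Asplund space need not be \Fr\ smooth, so neither is $g$, and the smooth variational principle is not available on $X$ itself. This is where the Asplund hypothesis enters essentially. I would resolve it by separable reduction: restrict the entire construction to a separable subspace $Y$ containing $\bx$, the shifts, and countably many points witnessing the relevant distances; by the characterization of Asplund spaces, $Y$ has separable dual and therefore admits an equivalent \Fr-smooth norm and a \Fr-smooth bump, so both the smooth variational principle and the smoothness of $g$ hold on $Y$. The functionals produced in $Y^*$ are then lifted to $X^*$ by Hahn--Banach while preserving the defining \Fr-normal inequalities. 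Managing this reduction---choosing $Y$ so that the normal-cone and distance data are faithfully reproduced, and controlling all constants as $\eps\downarrow0$---is the technical heart of the argument; the translation-invariance and normalization bookkeeping above then goes through verbatim.
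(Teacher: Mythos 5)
You should first note how the paper itself handles Theorem~\ref{EP}: it does not prove it. The result is quoted from \cite{MorSha96} (building on \cite{KruMor80}), with the remark that the proof employs the Ekeland variational principle and Fabian's fuzzy Fr\'echet subdifferential sum rule \cite{Fab89}; within the paper it is recovered as the special case $x_i^k:=\bx$ of the sequential extremal principle (Corollary~\ref{C3.10}), which in turn rests on the generalized separation Theorem~\ref{C3.9} imported from \cite{CuoKru25}. Your first three paragraphs --- the product-space penalization with the spread function $g$, the variational principle producing a minimizer where $g>0$, the exact Fermat rule for a differentiable function plus an indicator, the translation invariance giving $\sum_{i}\nabla_{x_i}g(\hat x)=0$, the lower bound $\sum_i\|\nabla_{x_i}g(\hat x)\|\ge 1$, and the final normalization --- are sound, and they essentially reproduce the original argument of \cite{KruMor80}, which is a complete proof in spaces admitting an equivalent Fr\'echet-smooth norm (in particular, in reflexive spaces). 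The small slips there are repairable: use a closed ball of slightly smaller radius to keep lower semicontinuity and choose the variational-principle parameters so the minimizer stays interior to it (otherwise the ball constraint contributes an extra normal in the $x_n$-slot), and note that your rescaling factor $\theta$ need not be $\le 1$, only bounded (say $\theta\le 2$ once $n\|\psi'\|_\infty\le 1/2$), which still yields $\|\sum_i x_i^*\|<\eps$ after adjusting constants.

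The genuine gap is the last paragraph, i.e.\ exactly the step that makes the theorem an \emph{Asplund-space} result. Your proposed mechanism --- compute Fr\'echet normals inside a separable subspace $Y$ with a smooth renorm, then extend the functionals to $X^*$ ``by Hahn--Banach while preserving the defining Fr\'echet-normal inequalities'' --- fails as stated. If $y^*\in Y^*$ satisfies the Fr\'echet-normal inequality at $\hat x_i$ for the set $(\Omega_i-a_i)\cap Y$, that inequality constrains $\langle y^*,u-\hat x_i\rangle$ only for $u$ in $\Omega_i\cap(Y+a_i)$; a Hahn--Banach extension $x^*\in X^*$ gives no control whatsoever on nearby points $u\in\Omega_i$ off the subspace, and in general \emph{no} extension of $y^*$ lies in $N^F_{\Omega_i}(\hat x_i+a_i)$: the cone $N^F_{(\Omega_i-a_i)\cap Y}(\hat x_i)$ computed in $Y$ is typically much larger than the set of restrictions to $Y$ of genuine Fr\'echet normals in $X$. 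Moreover, an arbitrary separable $Y$ ``containing countably many witnesses'' need not reproduce the variational data at all ($\Omega_i\cap(Y+a_i)$ can be too thin, altering both the infima and the emptiness condition). What actually closes this gap in the literature is Fabian's separable reduction: a transfinite/countable saturation construction of a \emph{rich} subspace for which Fr\'echet subdifferentiability transfers between $Y$ and $X$ --- a substantial theorem in its own right, not a Hahn--Banach lift. This is precisely why the standard proof (in \cite{MorSha96}, and behind Theorem~\ref{C3.9} of the present paper) instead applies the Ekeland variational principle to the nonsmooth penalized function and then splits the resulting subdifferential using Fabian's fuzzy sum rule \cite{Fab89}, which packages the separable reduction once and for all. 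As written, your argument proves Theorem~\ref{EP} only in Fr\'echet-smooth spaces; for general Asplund spaces the decisive step is asserted but not supplied, and in its stated form it is false.
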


Theorem~\ref{EP} employs \emph{\Fr\ normal cones}
(see definition~\eqref{NC}).
The dual conditions are formulated in a \emph{fuzzy} form and can be interpreted as \emph{generalized separation}; cf. \cite{BuiKru19}.
The statement naturally yields the limiting version of the extremal principle in terms of \emph{limiting normal cones} \cite{KruMor80,Kru85.1,MorSha96,Kru03, Mor06.1}.
In finite dimensions, the limiting version comes for free, while in infinite dimensions, additional \emph{sequential normal compactness} type assumptions are required to ensure that the equality in \eqref{EP-3} is preserved when passing to limits (as $\eps\downarrow0$); cf. \cite{Mor06.1}.

The conventional extremal principle was established in this form in \cite{MorSha96} as an extension of the original result from \cite{KruMor80}, which had been formulated in \emph{\Fr\ smooth} spaces and referred to as the \emph{generalized Euler equation}.
The proof employs
the \emph{\EVP} \cite{Eke74} and fuzzy \emph{\Fr\ subdifferential sum rule} due to Fabian \cite{Fab89}.
It was also shown in \cite{MorSha96} that the necessary conditions in Theorem~\ref{EP} are equivalent to the Asplund property of the space (see also \cite[Theorem~2.20]{Mor06.1}).
Recall that a Banach space is {Asplund} if every continuous convex function on an open convex set is Fr\'echet differentiable on a dense subset, or equivalently, if the dual of each its separable subspace is separable.
We refer the reader to \cite{Phe93,Mor06.1,BorZhu05} for discussions about and characterizations of Asplund spaces.
All reflexive, particularly, all finite dimensional Banach spaces are Asplund.

Definition~\ref{D1.1} and Theorem~\ref{EP} can be reformulated in the setting of the product space $X^n$ for the ``aggregate'' set
$\widehat\Omega:=\Omega_1\times\ldots\times\Omega_n$ employing the maximum norm on $X^n$ and the corresponding dual (sum) norm:
\begin{gather}
\label{norm}
\vertiii{(u_1,\ldots,u_{n})}:=\max_{1\le i\le n} \|u_i\|\qdtx{for all}u_1,\ldots,u_{n}\in X,
\\
\label{normd}
\vertiii{(u_1^*,\ldots,u_{n}^*)}:=\sum_{1=1}^n \|u_i^*\|\qdtx{for all}u_1^*,\ldots,u_{n}^*\in X^*.
\end{gather}
It was observed in \cite{CuoKru25} that Theorem~\ref{EP} and its numerous generalizations and extensions remain valid with an arbitrary product norm $\vertiii{\,\cdot\,}$ (together with the corresponding dual norm) satisfying natural compatibility conditions with the original norm $\|\cdot\|$ on $X$.
This more general setting, in particular, allows one to recapture the \emph{unified separation theorem} of Zheng \& Ng \cite{ZheNg05.2,ZheNg11} employing the \emph{$p$-weighted nonintersect index}, and its slightly more advanced version in \cite
{CuoKruTha24}.

In this paper, following the scheme initiated in \cite{CuoKru25}, we assume that $X^n$ is equipped with a norm $\vertiii{\,\cdot\,}$ satisfying
the following compatibility conditions:
\begin{gather}
\label{C}
\kappa_1\max_{1\le i\le n}\|u_i\|
\le\vertiii{(u_1,\ldots,u_n)} \le\kappa_2\max_{1\le i\le n}\|u_i\|\quad
\text{for all}\;\; u_1,\ldots,u_n\in X
\end{gather}
with some $\kappa_1>0$ and $\kappa_2>0$.
For a discussion of weaker compatibility conditions and some results without such conditions we refer the reader to \cite{CuoKru25}.
Clearly, the norm \eqref{norm} satisfies conditions \eqref{C}.
Under \eqref{C}, if $X$ is Banach/Asplund, then so is $X^n$, and the (\Fr\ or Clarke) normal cone to $\widehat\Omega$ equals the cartesian product of the corresponding cones to the individual sets; see \cite{CuoKru25}.

The next definition contains abstract product norm extensions of the extremality property in Definition~\ref{D1.1} and corresponding stationarity properties studied in \cite{Kru98,Kru02,Kru03,Kru04,Kru05,Kru06,Kru09,BuiKru18}.
Given a $u\in X$, we write $(u,\ldots,u)_n$ to specify that $(u,\ldots,u)\in X^n$.

\begin{definition}
[Extremality, stationarity and approximate stationarity]
\label{D1.3}
Let $\bx\in\bigcap_{i=1}^n\Omega_i$.
The collection $\{\Omega_1,\ldots,\Omega_n\}$ is \begin{enumerate}
\item
\label{D2.5.1}
{extremal} at $\bx$ if there is a $\rho\in(0,+\infty]$ such that,
for any $\varepsilon>0$, there exists a point $(a_1,\ldots,a_n)\in\eps\B_{X^n}$ such that condition \eqref{D1.1-2} is satisfied;
\item
\label{D2.5.2}
{stationary} at $\bx$ if,
for any $\eps>0$,
there exist a $\rho\in(0,\eps)$
and a point $(a_1,\ldots,a_n)\in\eps\rho\B_{X^n}$ such that condition \eqref{D1.1-2} is satisfied;
\item
\label{D2.5.3}
{approximately stationary} at $\bx$ if,
for any $\eps>0$,
there exist a $\rho\in(0,\eps)$, and points $(x_1,\ldots,x_n)\in\widehat\Omega\cap B_\eps((\bx,\ldots,\bx)_n)$ and $(a_1,\ldots,a_n)\in\eps\rho\B_{X^n}$ such that
\begin{gather}
\label{D2.5.3-1}
\bigcap_{i=1}^n(\Omega_i-x_i-a_i)\cap(\rho\B)
=\emptyset.
\end{gather}
\end{enumerate}
\end{definition}

The relationships between the properties in Definition~\ref{D1.3} are straightforward:
\eqref{D2.5.1} \folgt\ \eqref{D2.5.2} \folgt\ \eqref{D2.5.3}.
The approximate stationarity, the weakest of the three properties, is still sufficient for
the generalized separation in Theorem~\ref{EP} in Asplund spaces.
The two properties are actually equivalent.
The next statement from \cite{CuoKru25} generalizes the \emph{extended extremal principle} \cite[Theorem~3.7]{Kru03}.

\begin{theorem}
[Extended extremal principle]
\label{T1.4}
Let $X$ be Asplund, $\Omega_1,\ldots,\Omega_n$ 
be closed, and $\bx\in\bigcap_{i=1}^n\Omega_i$.
The collection $\{\Omega_1,\ldots,\Omega_n\}$ is  approximately stationary at $\bx$ if and only if,
for any $\varepsilon>0$, there exist $x_i\in\Omega_i\cap B_\eps(\bx)$ and $x_i^*\in N^F_{\Omega_i}(x_i)$ $(i=1,\ldots,n)$ such that
\begin{gather}
\label{T1.4-2}
\Big\|\sum_{i=1}^nx_i^*\Big\|<\eps,\quad
\vertiii{\hat x^*}=1,
\end{gather}
where $\hat x^*:=(x_1^*,\ldots,x_n^*)$.
\end{theorem}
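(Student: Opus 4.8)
The plan is to prove the two implications separately; the substantive content lies in the direction ``approximate stationarity $\Rightarrow$ \eqref{T1.4-2}'', which is the extremal-principle mechanism, while the converse is a direct dual-pairing construction. For the forward direction I would work in the product space $X^n$, which is Asplund under \eqref{C}, and record the two structural facts that drive everything: the \Fr\ normal cone to $\widehat\Omega$ is the product $\prod_{i=1}^n N^F_{\Omega_i}$, and the annihilator of the diagonal subspace $D:=\{(z,\ldots,z)_n:z\in X\}$ is exactly $D^\perp=\{\hat x^*=(x_1^*,\ldots,x_n^*):\sum_{i=1}^n x_i^*=0\}$, which is immediate from $\ang{\hat x^*,(z,\ldots,z)_n}=\ang{\sum_{i=1}^n x_i^*,z}$. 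Fixing $\eps>0$, approximate stationarity supplies $\rho\in(0,\eps)$, a point $\hat x=(x_1,\ldots,x_n)\in\widehat\Omega$ near $(\bx,\ldots,\bx)_n$ and $\hat a=(a_1,\ldots,a_n)\in\eps\rho\B_{X^n}$ with \eqref{D2.5.3-1}. Geometrically this says the shifted set $\widehat\Omega-\hat x-\hat a$ contains the point $-\hat a$ (whose $\vertiii{\cdot}$-distance to $D$ is at most $\eps\rho$) yet meets $D$ nowhere among the points whose diagonal coordinate lies in $\rho\B$. I would therefore minimize the convex Lipschitz function $u\mapsto d(u,D)$ over $u\in\widehat\Omega-\hat x-\hat a$: its value at $-\hat a$ is at most $\eps\rho$, while \eqref{D2.5.3-1} keeps it strictly positive at any feasible point whose diagonal projection lands in $\rho\B$.

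I would then apply the \EVP\ \cite{Eke74} to obtain an approximate minimizer $\hat x'$ of this constrained problem, lying within $O(\sqrt\eps\rho)$ of $-\hat a$ and minimizing the objective perturbed by a linear term of slope $O(\sqrt\eps)$. Fermat's rule together with Fabian's fuzzy \Fr\ subdifferential sum rule \cite{Fab89} in the Asplund space $X^n$ splits the approximate stationarity of $\hat x'$ into a normal $\hat x^*\in N^F_{\widehat\Omega}(\cdot)=\prod_{i=1}^n N^F_{\Omega_i}(x_i'')$ at nearby points $x_i''\in\Omega_i$ and a subgradient $\hat u^*\in\sd\, d(\cdot,D)(\hat x')$, up to an error of size $O(\sqrt\eps)$. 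The decisive point is that, because $\hat x'$ is off the diagonal (its projection stays in $\rho\B$, so emptiness forbids it from lying in $D$), the subgradient of the distance to the \emph{subspace} $D$ lies in $D^\perp$ and has dual norm exactly $1$; combined with the description of $D^\perp$ above, the relation $\hat x^*\approx-\hat u^*$ yields \emph{simultaneously} $\sum_{i=1}^n x_i^*\approx0$ and $\vertiii{\hat x^*}\approx1$. The hard part is the quantitative bookkeeping: the Ekeland radius, the sum-rule fuzz and the small value of $d(\cdot,D)$ at $-\hat a$ must be balanced so that the diagonal projection of $\hat x'$ stays inside $\rho\B$ (otherwise strict positivity, hence the off-diagonal subgradient, is unavailable), so that $\hat x^*$ does not collapse to zero and admits a harmless rescaling to the exact normalization $\vertiii{\hat x^*}=1$, and so that the base points $x_i''$ remain in $B_\eps(\bx)$; after relabelling $\eps$ this delivers \eqref{T1.4-2}. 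This is precisely where the Asplund hypothesis is essential, since Fabian's sum rule characterizes Asplundness.

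For the converse I would argue directly rather than by contradiction. Given the target $\eps$, apply \eqref{T1.4-2} at a finer scale $\eps'\ll\eps$ to obtain $x_i\in\Omega_i\cap B_{\eps'}(\bx)$ and $x_i^*\in N^F_{\Omega_i}(x_i)$ with $\norm{\sum_{i=1}^n x_i^*}<\eps'$ and $\vertiii{\hat x^*}=1$; by \eqref{normd} and \eqref{C}, $\sum_{i=1}^n\norm{x_i^*}$ is then bounded below by a positive constant $c$. Choosing near-norming vectors $a_i$ with $\norm{a_i}\le\eps\rho$ and $\ang{x_i^*,a_i}$ within arbitrary slack of $\norm{x_i^*}\,\norm{a_i}$ (possible by the definition of the dual norm), I would invoke the defining inequality of the \Fr\ normal cone, valid on a neighbourhood controlled by a small $\eta$, to show that any putative common point $z\in\rho\B$ of the shifted sets would satisfy $\sum_{i=1}^n\ang{x_i^*,z+a_i}\le\eta\sum_{i=1}^n\norm{z+a_i}$. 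The left-hand side is at least $c\,\eps\rho-\eps'\rho$ minus the slack, by the pairing choice and the smallness of $\sum_{i=1}^n x_i^*$, whereas the right-hand side is $O(\eta\rho)$; separating the three scales $\eps'\ll\eps$ and $\eta$ small yields a contradiction, so \eqref{D2.5.3-1} holds with $\rho<\eps$, establishing approximate stationarity. The only subtlety here is the scale separation and the achievability of the near-norming pairing in a general normed space, both of which are routine.
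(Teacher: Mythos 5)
Your proof is correct, but note first how the paper itself treats Theorem~\ref{T1.4}: it is imported from \cite{CuoKru25} without an in-text proof, and within this paper it is recovered as the constant-sequence case $x^k:=\bx$ of Corollary~\ref{C4.5} (see Remark~\ref{R3.8}), whose forward implication rests on Theorem~\ref{T4.3} and hence on the generalized separation Theorem~\ref{C3.9}, and whose converse is the implication \eqref{c1} of Theorem~\ref{T4.4}. Your argument runs the same engine (Ekeland plus Fabian's fuzzy sum rule) but directly, with two organizational differences. In the forward direction, the machinery behind Theorem~\ref{C3.9} minimizes $(\hat u,u)\mapsto\vertiii{\hat u-\hat x^\circ-(u,\ldots,u)_n}$ over $\widehat\Omega\times\rho\overline\B$, so the diagonal variable is confined to the ball by construction (this is also what produces the extra point $x_0$ and the norming equality, which \eqref{T1.4-2} does not need); you instead minimize the distance to the full unbounded diagonal $D$ and must localize, and your bookkeeping does close: taking the Ekeland parameters so that $\vertiii{\hat x'+\hat a}\le\sqrt{\eps}\,\rho$, any zero of $d(\cdot,D)$ at the Ekeland point would be a diagonal point $(z,\ldots,z)$ in the feasible set with $\kappa_1\|z\|\le(\sqrt{\eps}+\eps)\rho$, hence $z\in\rho\B$ for small $\eps$ by \eqref{C}, contradicting \eqref{D2.5.3-1}; so $d(\hat x',D)>0$ and the fuzz radius in the sum rule can then be chosen below this fixed positive number. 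Your two structural claims also check out: $D^\perp=\{\hat x^*:\sum_{i=1}^n x_i^*=0\}$ under the pairing you display, and every subgradient of the distance to a closed subspace at a point outside it lies in $D^\perp$ with dual norm exactly $1$ (testing against an almost-nearest sequence in $D$ forces $\vertiii{\hat u^*}\ge1$), which is what yields $\sum_i x_i^*\approx0$ and $\vertiii{\hat x^*}\approx1$ simultaneously before the harmless rescaling. In the converse, the paper's proof of Theorem~\ref{T4.4}\,\eqref{c1} norms the aggregate functional, choosing a single $\hat a$ with $\langle\hat x^*,\hat a\rangle>(\be+\xi)\rho$ directly from $\vertiii{\hat x^*}=1$, whereas you norm each $x_i^*$ componentwise and use the lower bound $\sum_i\|x_i^*\|\ge\kappa_1$ coming from the dual form of \eqref{C}; this costs only a factor $\kappa_1/\kappa_2$ in the contradiction on $\langle\sum_i x_i^*,x_0\rangle$ and is equally valid. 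The one point you should make explicit there is the quantifier order: fix the Fr\'echet-cone slack $\eta$ (depending on $\eps$, $n$ and $\kappa_1/\kappa_2$) first, and only then take $\rho$ smaller than the resulting neighbourhoods at the points $x_i$ (and smaller than $\eps$), which your phrase ``a neighbourhood controlled by a small $\eta$'' implicitly does. In short: same underlying mechanism as the paper's source result, executed self-containedly; what the paper's route buys is the stronger sequential statements and the norming inequality \eqref{T4.3-2} for free, while your route buys a shorter, purpose-built proof of exactly \eqref{T1.4-2}.
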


The conventional concept of extremality of a collection of sets in Definition~\ref{D1.1} as well as its generalizations and extensions in Definition~\ref{D1.3} and corresponding characterizations in Theorems~\ref{EP} and \ref{T1.4} are attached to a fixed point in the intersection of the sets (often referred to as \emph{extremal point}).
{This typically corresponds to an optimal or stationary point in an optimization problem, and Theorems~\ref{EP} and \ref{T1.4}, when applied to appropriate collections of sets, give corresponding necessary optimality or stationarity conditions.
This was the main motivation behind the original extremal principle \cite{KruMor80}.
Despite its recognized versatility and numerous applications, this model does not cover an important class of problems involving unbounded sets.
Such situations occur, for instance, when the infimum of a minimized function is not attained.
Nevertheless, some analogues of the extremality, stationarity and generalized separation properties may still hold ``approximately'' with a common point of the sets replaced by unbounded sequences.}
This can be illustrated by the following example of a pair of unbounded sets in $\R^2$.

\begin{example}\label{E1.5}
Let $X:=\R^2$.
Consider closed convex sets $\Omega_1:=\{(x,y)\mid x>0,\;xy\ge1\}$ and
$\Omega_2:=\{(x,y)\mid y\le0\}$; see Figure~\ref{fig1}.
We have $d(\Omega_1,\Omega_2)=0$ while $\Omega_1\cap\Omega_2=\es$.
At the same time, for any $\eps>0$, $t>\eps\iv$ and $\xi\in(\frac1t,\eps)$, taking $x_1:=(t,\frac1t)\in\Omega_1$, $x_2:=(t,0)\in\Omega_2$, $a_1:=(0,-\xi)$ and $a_2:=(0,0)$, one has $(\Omega_1-x_1-a_1)\cap(\Omega_2-x_2-a_2)=\es$.
This can be interpreted as an approximate version of the conditions in Definition~\ref{D1.1} with $\rho=+\infty$ and the pair of points $x_1\in\Omega_1$ and $x_2\in\Omega_2$ (depending on $t$) replacing the nonexistent common point $\bx$.
Note that $\|x_1\|\to+\infty$, $\|x_2\|\to+\infty$ and $\|x_1-x_2\|\to0$ as $t\to+\infty$.

Moreover, assuming for simplicity that $\R^2$ is equipped with the maximum norm, for any $\eps>0$, taking a $t>\max\{\frac1{\sqrt{2\eps}},1\}$, and points $x_1:=(t,\frac1t)\in\Omega_1$ and
$x_2:=(t,0)\in\Omega_2$ as above, we have
$x_1^*:=(-\frac1{2t^2},-\frac12)\in N_{\Omega_1}(x_1)$,
$x_2^*:=(0,\frac12)\in N_{\Omega_2}(x_2)$,
$\|x_1^*\|=\|x_2^*\|=\frac12$ and $\|x_1^*+x_2^*\|=\frac1{2t^2}<\eps$, i.e., the conclusions of Theorem~\ref{EP} hold approximately, with the pair of points $x_1\in\Omega_1$ and $x_2\in\Omega_2$ replacing the nonexistent common point $\bx$.
\begin{figure}[H]
\centering
\includegraphics[width=\linewidth]{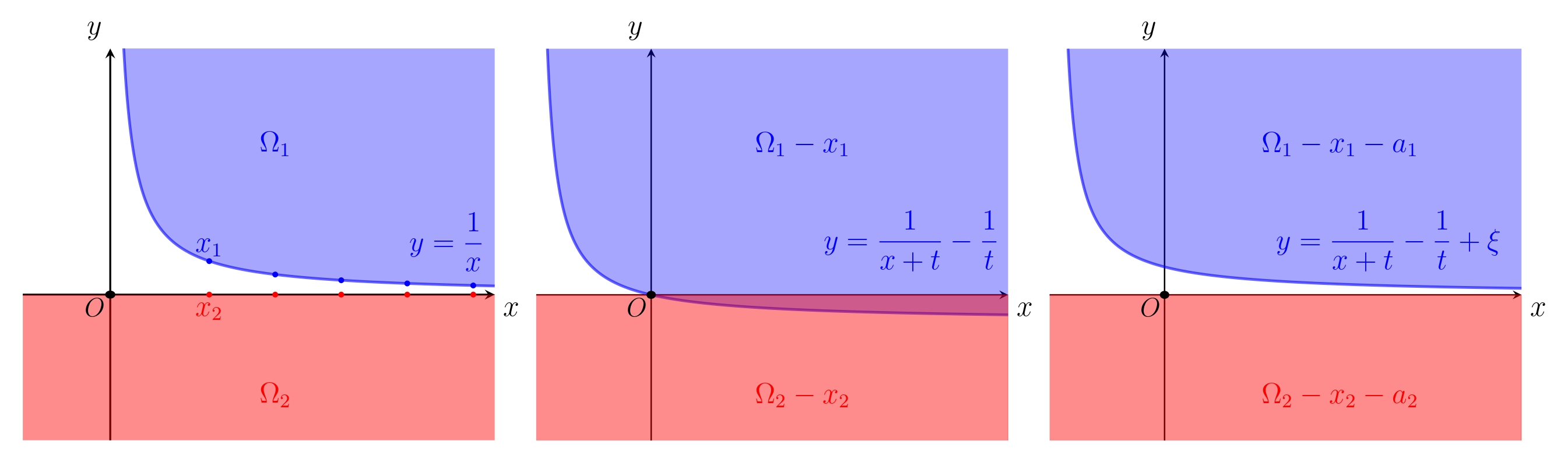}
\caption{Example~\ref{E1.5}}
\label{fig1}
\end{figure}
\end{example}

\if{
\AK{26/01/25.
$\Omega_2$ includes also points with negative $x$.

What is the line connecting the origin and $x_1$ for?

Should the points $x_1$ and $x_2$ be further away from the origin?
(We target ``infinity''.)

Should the origin be labelled by zero instead of O?

Is it possible/difficult to depict also $\Omega_1-x_1-a_1$?
}
\NDC{4/2/25.
To be modified.}
}\fi

This paper, motivated by the recent research targeting optimality conditions and subdifferential/coderivative calculus ``at infinity'' in
\cite{NguPha24,KimNguPha},
\if{
\AK{16/02/25.
Should any specific definitions or results from \cite{KimTunSon25,kimsontuntuy23} or any other their papers be mentioned in our paper somewhere?}
\NDC{18/2/25.
I have added two more of their papers.
For this paper, I think citing their works as references is sufficient. In our next paper on limiting objects, there will be relevant definitions/results to discuss.}
}\fi
extends the model in Definition~\ref{D1.3} to cover the case of a collection of unbounded sets
satisfying
\begin{equation}
\label{d}
d(\Omega_1,\ldots,\Omega_n) :=\inf_{x_1\in\Omega_1,\ldots,x_n\in\Omega_n} \diam\{x_1,\ldots,x_n\}=0,
\end{equation}
where
$\diam\{x_1,\ldots,x_n\}:=
\max_{1\le i,j\le n}\|x_i-x_j\|$ is the diameter of the set $\{x_1,\ldots,x_n\}$.
{As demonstrated in \cite{NguPha24,KimNguPha}, such collections appear naturally in optimization, particularly when the infimum of an objective function is not attained.
The pair of sets in Example~\ref{E1.5} corresponds to the problem of minimizing the function $x\mapsto1/x$ over the half-line $x>0$.
The conventional extremal principle and its ``at a point'' extensions are not applicable in such situations.}

{We replace the common point of the sets $\Omega_1,\ldots,\Omega_n$ (which may not exist) in
the corresponding definitions by appropriate sequences.
Dual necessary conditions in the form of generalized separation extending Theorems~\ref{EP} and \ref{T1.4} and other results are established using standard techniques based on the Ekeland variational principle and subdifferential sum rules.
These standard techniques lying at the core of the conventional extremal principle were encapsulated by Zheng \& Ng in two abstract \emph{non-inter\-section lemmas} \cite{ZheNg05.2} and then in a \emph{unified separation theorem} \cite{ZheNg11} not related to any point and, at the same time, implying the conventional extremal principle and other `at a point' results.
This approach has been further refined in \cite{BuiKru18,BuiKru19,CuoKruTha24,CuoKru25}.
In this paper, we employ a theorem from \cite{CuoKru25} to proving a sequential extremal principle and some related results.}

{The sequential generalized separation results developed in this paper extend the applicability of the conventional approach based on the extremal principle (convincingly promoted in \cite{Mor06.1}), e.g., to optimization problems in which the optimal value is not attained.
Our sequential model seems more flexible than the one developed in \cite{NguPha24,KimNguPha} and does not require introducing an artificial infinity point.}

The new extended model is discussed in Section~\ref{S2}, where sequential versions of the extremality, stationarity and approximate stationarity concepts for a finite collection of sets in a normed vector space are introduced.
The corresponding generalized separation conditions including the sequential extremal principle are established in Section~\ref{S3}.
To illustrate the model, we consider in Section~\ref{S4} a constrained optimization problem and discuss sequential minimality and stationarity properties.
Employing the sequential extremal principle, we deduce  in Section~\ref{S5} sequential optimality and stationarity conditions for the considered constrained optimization problem.
The final Section~\ref{Conclusions} summarises the contributions of the paper and lists potential directions of future research.

\subsubsection*{Preliminaries}
\label{Prel}

Our basic notation is standard, see, e.g., \cite{Mor06.1,BorZhu05}.
The topological dual of a normed space $X$ is denoted by $X^*$, while $\langle\cdot,\cdot\rangle$ denotes the bilinear form defining the pairing between the two spaces.
Symbols $\B$ (possibly with a subscript indicating the space) and $B_\rho(\bx)$ denote the open unit ball and open ball with centre $\bx$ and radius $\rho$, respectively, while $\overline\B$ denotes the closed unit ball.
If $(x,y)\in X\times Y$, we write $B_\varepsilon(x,y)$ instead of $B_\varepsilon((x,y))$.
Symbols $\R$, $\R_+$ and $\N$ stand for the sets of all, respectively, real, nonnegative real and positive integer numbers.
The notation
$\{x^k\}\subset\Omega$ denotes a sequence of points $x^k\in\Omega$ $(k\in\N)$.

We consider the normed spaces $(X,\|\cdot\|)$ and $(X^n,\vertiii{\,\cdot\,})$ with the norm compatibility condition \eqref{C}, and the ``aggregate'' set
$\widehat\Omega:=\Omega_1\times\ldots\times\Omega_n$.
As the meaning will always be clear from the context,
we keep the same notations $\|\cdot\|$ and $\vertiii{\,\cdot\,}$ for the corresponding norms on $X^*$ and $(X^*)^n$, and use $d(\cdot,\cdot)$ to denote
distances (including point-to-set and set-to-set distances) in all spaces determined by the corresponding norms.

\paragraph*{Normal cones and subdifferentials.}
We first recall the definitions of normal cones and subdifferentials in the sense of Fr\'echet and Clarke;
see, e.g., \cite{Cla83,Kru03,Mor06.1}.
Given a subset $\Omega$ of a normed space $X$ and a point $\bx\in \Omega$, the sets
\begin{gather}\label{NC}
N_{\Omega}^F(\bx):= \Big\{x^\ast\in X^\ast\mid
\limsup_{\Omega\ni x{\rightarrow}\bar x,\;x\ne\bx} \frac {\langle x^\ast,x-\bx\rangle}
{\|x-\bx\|} \le 0 \Big\},
\\\label{NCC}
N_{\Omega}^C(\bx):= \left\{x^\ast\in X^\ast\mid
\ang{x^\ast,z}\le0
\;\;
\text{for all}
\;\;
z\in T_{\Omega}^C(\bx)\right\}
\end{gather}
are the, respectively, \emph{Fr\'echet} and \emph{Clarke normal cones} to $\Omega$ at $\bx$.
Symbol $T_{\Omega}^C(\bx)$
in \eqref{NCC}
stands for the \emph{Clarke tangent cone} to $\Omega$ at $\bx$:
\begin{multline*}
T_{\Omega}^C(\bx):= \big\{z\in X\mid
\forall x_k{\rightarrow}\bx,\;x_k\in\Omega,\;\forall t_k\downarrow0,\;\exists z_k\to z
\;\;
\text{with}
\;\;
x_k+t_kz_k\in \Omega
\;\;
\text{for all}
\;\;
k\in\N\big\}.
\end{multline*}
The sets \eqref{NC} and \eqref{NCC} are nonempty
closed convex cones satisfying $N_{\Omega}^F(\bx)\subset N_{\Omega}^C(\bx)$.
If $\Omega$ is a convex set, they reduce to the normal cone $N_{\Omega}(\bx)$ in the sense of convex analysis.

For an extended-real-valued function $f:X\to\R_\infty:=\R\cup\{+\infty\}$ on a normed space $X$,
its domain and epigraph are defined,
respectively, by
$\dom f:=\{x \in X\mid f(x) < +\infty\}$
and
$\epi f:=\{(x,\alpha) \in X \times \mathbb{R}\mid f(x) \le \alpha\}$.
The \emph{Fr\'echet} and \emph{Clarke subdifferentials} of $f$ at $\bar x\in\dom f$
are defined, respectively, by
\begin{gather}\label{SF}
\sd^F f(\bar x):= \left\{x^* \in X^*\mid (x^*,-1) \in N^F_{\epi f}(\bar x,f(\bar x))\right\},
\\ \label{SC}
\partial^C{f}(\bx):= \left\{x^\ast\in X^\ast\mid
(x^*,-1)\in N_{\epi f}^C(\bx,f(\bx))\right\}.
\end{gather}
The sets \eqref{SF} and \eqref{SC} are closed and convex, and satisfy
$\partial^F{f}(\bx)\subset\partial^C{f}(\bx)$.
If $f$ is convex, they
reduce to the subdifferential $\sd f(\bx)$ in the sense of convex analysis.

\paragraph*{Generalized separation.}

The next generalized separation statement is the key tool in the proof of our main result.
It is a simplified local version of a more general separation statement from \cite
{CuoKru25}.
\if{
\AK{12/12/24.
Avoid referring to particular statements/definitions and formulas in unpublished papers.
The numbering can change, and you can forget to update it, or it can be too late.}
}\fi
\begin{theorem}
[Generalized separation]
\label{C3.9}
Let $X$ be Banach, $\Omega_1,\ldots,\Omega_n$ 
be closed,
$\widehat\omega\in\widehat\Omega$,
$\hat x^\circ:= (x^\circ_1,\ldots,x^\circ_n)\in X^n$, $\eps>0$, $\de>0$ and $\rho>0$.
Suppose that  ${\bigcap}_{i=1}^n(\Omega_i-x^\circ_i)\cap (\rho\overline\B)=\emptyset$, and
$\vertiii{\widehat\omega-\hat x^\circ}<\varepsilon$.
The following assertions hold true:
\begin{enumerate}
\item
\label{C3.3-1}
there exist points $\hat x:=(x_1,\ldots,x_n)\in\widehat\Omega\cap B_\de(\widehat\omega)$, {$x_0\in\rho\B$} and $\hat x^*:=(x_1^*,\ldots,x_n^*)\in (X^*)^n$ such that $\vertiii{\hat x^*}=1$,
and
\begin{gather*}
{\de}d\left(\hat x^*,N^C_{\widehat\Omega}(\hat x)\right)+ \rho\Big\|\sum_{i=1}^nx_i^*\Big\| <{\eps},\\
\ang{\hat x^*,(x_0,\dots,x_0)_n+\hat x^\circ-\hat x}= \vertiii{(x_0,\dots,x_0)_n+\hat x^\circ-\hat x};
\end{gather*}
\item
\label{C3.3-2}
if $X$ is Asplund, then, for any $\tau\in(0,1)$, there exist points $\hat x:=(x_1,\ldots,x_n)\in\widehat\Omega\cap B_\de(\widehat\omega)$, {$x_0\in\rho\B$} and $\hat x^*:=(x_1^*,\ldots,x_n^*)\in (X^*)^n$ such that $\vertiii{\hat x^*}=1$, and
\begin{gather*}
{\de}d\left(\hat x^*,N^F_{\widehat\Omega}(\hat x)\right)+ \rho\Big\|\sum_{i=1}^nx_i^*\Big\| <{\eps},\\
\ang{\hat x^*,(x_0,\dots,x_0)_n+\hat x^\circ-\hat x}>\tau\vertiii{(x_0,\dots,x_0)_n+\hat x^\circ-\hat x}.
\end{gather*}
\end{enumerate}
\end{theorem}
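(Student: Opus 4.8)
The plan is to reduce both assertions to a single constrained minimization and then invoke the \EVP\ together with a subdifferential sum rule, following the template behind the conventional extremal principle (indeed, Theorem~\ref{C3.9} is a local specialization of the separation result of \cite{CuoKru25}). Introduce the \emph{gap function}
\[
f(u,\hat x):=\vertiii{(u,\dots,u)_n+\hat x^\circ-\hat x}
\]
on the complete metric space $\rho\overline\B\times\widehat\Omega$, a product of closed sets. The hypothesis ${\bigcap}_{i=1}^n(\Omega_i-x^\circ_i)\cap(\rho\overline\B)=\es$ says exactly that $f$ never vanishes, so $f>0$ throughout; on the other hand, $u=0$, $\hat x=\widehat\omega$ gives $f(0,\widehat\omega)=\vertiii{\hat x^\circ-\widehat\omega}<\eps$. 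Since $\inf f\ge0$, the pair $(0,\widehat\omega)$ is an $\eps'$-minimizer with $\eps':=f(0,\widehat\omega)<\eps$, which is precisely the data required by Ekeland's principle.

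Equip $X\times X^n$ with the scaled norm $|(u,\hat x)|:=\max\{\rho\iv\|u\|,\,\de\iv\vertiii{\hat x}\}$, whose dual is the weighted sum $\rho\|\cdot\|+\de\vertiii{\,\cdot\,}$, and apply the \EVP\ to $f+\iota_C$, where $C:=\rho\overline\B\times\widehat\Omega$ and $\iota_C$ is its indicator, with parameter $\eps'$ and radius $\lambda\in(\eps'/\eps,1)$ (possible because $\eps'<\eps$). This yields an exact minimizer $(x_0,\hat x)$ of
\[
(u,\hat y)\mapsto f(u,\hat y)+\gamma\,|(u,\hat y)-(x_0,\hat x)|+\iota_C(u,\hat y),
\]
with penalty coefficient $\gamma:=\eps'/\lambda<\eps$ and displacement $|(x_0,\hat x)-(0,\widehat\omega)|\le\lambda<1$. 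The strict displacement bound forces $\|x_0\|<\rho$ and $\vertiii{\hat x-\widehat\omega}<\de$, i.e. $x_0\in\rho\B$ and $\hat x\in\widehat\Omega\cap B_\de(\widehat\omega)$; in particular $x_0$ is interior to $\rho\overline\B$, so the ball contributes no normal.

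For assertion~\eqref{C3.3-1}, note that $f$ is convex (an affine image of a norm) and Lipschitz, so its Clarke subdifferential coincides with the convex one and, by the chain rule, $\partial f(x_0,\hat x)=\{(\sum_i x_i^*,-\hat x^*):\hat x^*\in\partial\vertiii{\,\cdot\,}(v)\}$, where $v:=(x_0,\dots,x_0)_n+\hat x^\circ-\hat x\ne0$; every such $\hat x^*=(x_1^*,\dots,x_n^*)$ obeys the normalization $\vertiii{\hat x^*}=1$ and the alignment $\ang{\hat x^*,v}=\vertiii{v}$. Writing the Clarke optimality condition at $(x_0,\hat x)$ and the sum rule (legitimate since $f$ and the penalty are Lipschitz), one gets $0\in\partial f(x_0,\hat x)+\gamma\,\partial|\cdot|(0)+N^C_C(x_0,\hat x)$, with $N^C_C(x_0,\hat x)=\{0\}\times N^C_{\widehat\Omega}(\hat x)$. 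Splitting the penalty subgradient $(\eta,\zeta)\in\gamma\,\partial|\cdot|(0)$, which obeys $\rho\|\eta\|+\de\vertiii{\zeta}\le\gamma$ by the dual description above, the $u$-component gives $\sum_i x_i^*=-\eta$ and the $\hat x$-component gives $\hat x^*\in\zeta+N^C_{\widehat\Omega}(\hat x)$. Hence
\[
\rho\Big\|\sum_{i=1}^n x_i^*\Big\|+\de\,d\big(\hat x^*,N^C_{\widehat\Omega}(\hat x)\big)\le\rho\|\eta\|+\de\vertiii{\zeta}\le\gamma<\eps,
\]
which is~\eqref{C3.3-1}.

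For the Asplund assertion~\eqref{C3.3-2} the indicator $\iota_C$ is no longer convex, so I would replace Clarke calculus by the fuzzy \Fr\ sum rule of Fabian \cite{Fab89}. This only produces \Fr\ subgradients at points close to $(x_0,\hat x)$, with residual sum as small as desired; in particular the norming functional $\hat x^*$ is then exact for a slightly perturbed gap vector $v'$ rather than for $v$, so the alignment weakens from equality to $\ang{\hat x^*,v}>\tau\vertiii{v}$ once the perturbation is small relative to $\tau$ and $\vertiii{v}>0$. The main obstacle is exactly this Asplund bookkeeping: one must reconcile the several nearby points returned by the fuzzy rule into the single triple $(\hat x,x_0,\hat x^*)$, transfer the exact norming identity at $v'$ into the required strict inequality at $v$ while keeping $\vertiii{\hat x^*}=1$, and tune the Ekeland radius and the fuzz simultaneously so that $x_0\in\rho\B$, $\hat x\in B_\de(\widehat\omega)$ and the weighted residual below $\eps$ all survive the limiting localization.
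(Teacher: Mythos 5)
The paper itself contains no proof of Theorem~\ref{C3.9}: it is imported as a simplified local version of a separation theorem from \cite{CuoKru25}, with the remark that it follows from the \EVP\ and subdifferential sum rules --- which is exactly the route you take. Your proof of assertion~\eqref{C3.3-1} is complete and correct: the nonintersection hypothesis makes the gap function $f$ strictly positive on $\rho\overline\B\times\widehat\Omega$ (and forces $\eps':=f(0,\widehat\omega)>0$, since $\eps'=0$ would put $0$ in $\bigcap_{i=1}^n(\Omega_i-x^\circ_i)\cap\rho\overline\B$, so your window $\lambda\in(\eps'/\eps,1)$ is nonempty); the scaled maximum norm with dual $\rho\|\cdot\|+\de\vertiii{\,\cdot\,}$ is precisely what produces the weighted residual; the strict bounds $\lambda<1$ and $\gamma=\eps'/\lambda<\eps$ keep $x_0$ interior to the ball (so the ball contributes no normal) and the residual strictly below $\eps$; and the convex chain rule for $\vertiii{\,\cdot\,}\circ A$ with $A^*\hat x^*=(\sum_i x_i^*,-\hat x^*)$, combined with Clarke's condition $0\in\partial^C g(\bar z)+N^C_C(\bar z)$ for minimizing a Lipschitz function over a closed set and $N^C_C(x_0,\hat x)=\{0\}\times N^C_{\widehat\Omega}(\hat x)$, yields both displayed conditions, the alignment equality coming from the norm subdifferential at $v\ne0$.

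Assertion~\eqref{C3.3-2} you only sketch, and you say so yourself; that is the one genuine gap, though it is a gap of execution rather than of strategy, and every obstacle you list does close. You should make three points explicit. First, apply the two-function fuzzy rule with $g:=f+\gamma\,|\cdot-(x_0,\hat x)|$ and $h:=\iota_C$: since $g$ is convex Lipschitz, $\partial^Fg=\partial g\subset\partial f+\gamma\,\partial|\cdot|(0)$ by exact convex calculus, so only \emph{two} nearby points arise --- one, say $(u_1,\hat y_1)$, carrying the norming functional of the perturbed gap vector $v'$, and one, $(u_2,\hat y_2)\in C$, carrying the \Fr\ normal; the theorem's triple is then $x_0:=u_1$, $\hat x:=\hat y_2$, so $\vertiii{v-v'}=\vertiii{\hat y_1-\hat y_2}$ is controlled by the fuzz alone. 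Second, the passage from the exact equality at $v'$ to the $\tau$-inequality at $v$ requires $\vertiii{v}$ bounded away from zero; this holds because $f$ is continuous and strictly positive at the Ekeland point (it lies in $C$, where $f>0$), hence $f\ge c>0$ on a neighbourhood, and taking the fuzz small relative to $(1-\tau)c$ gives $\ang{\hat x^*,v}\ge\vertiii{v'}-\vertiii{v-v'}\ge\vertiii{v}-2\vertiii{v-v'}>\tau\vertiii{v}$. Third, the strict inequalities $\lambda<1$ and $\gamma<\eps$ already leave slack, so shrinking the fuzz preserves $u_1\in\rho\B$, $\hat y_2\in B_\de(\widehat\omega)$, $\vertiii{\hat x^*}=1$ and the weighted residual below $\eps$. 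With these steps written out, your argument becomes a complete proof along the same lines as the cited source.
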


Similar to the conventional extremal principle, the latter statement is a consequence of the {\EVP} and corresponding subdifferential sum rules.
With the appropriate product space norms, it
covers the {unified separation theorems} by Zheng \& Ng \cite{ZheNg05.2,ZheNg11} and their slightly more advanced versions in \cite
{CuoKruTha24}.
Theorem~\ref{C3.9} combines two assertions: the traditional Asplund space one covering Theorems~\ref{EP} and \ref{T1.4} in part \eqref{C3.3-2} and the general Banach space assertion in terms of {Clarke normal cones} in part \eqref{C3.3-1}.

\begin{remark}
Theorem~\ref{C3.9} remains true if the assumption $\vertiii{\widehat\omega-\hat x^\circ}<\varepsilon$ is replaced by the next weaker one (see \cite{CuoKru25}):
$\vertiii{\widehat\omega-\hat x^\circ} <\inf_{\substack{\hat u\in\widehat\Omega,\; u\in\rho\B}} \vertiii{\hat u-\hat x^\circ -(u,\ldots,u)_n}+\varepsilon$.
\end{remark}

\section{Sequential extremality, stationarity and approximate stationarity}
\label{S2}

In this section, we discuss sequential versions of the extremality, stationarity and approximate stationarity concepts for a finite collection of sets in a normed vector space.

In what follows, the sets $\Omega_1,\ldots,\Omega_n$ are not supposed to have a common point.
Instead, we assume
that $d(\Omega_1,\ldots,\Omega_n)=0$ (see
\eqref{d}), i.e., there exist sequences
$\{x_i^k\}\subset\Omega_i$ $(i=1,\ldots,n)$ such that $\diam\{x_1^k,\ldots,x_n^k\}\to0$ as $k\to+\infty$.
The sequences may be unbounded.

The next definition is a modification of Definition~\ref{D1.3} employing sequences of the type described above.

\begin{definition}
[Sequential extremality, stationarity and approximate stationarity]
\label{D3.1}
The collection $\{\Omega_1,\ldots,\Omega_n\}$
is
\begin{enumerate}
\item
\label{D3.1.1}
extremal at sequences
$\{x_i^k\}\subset\Omega_i$ $(i=1,\ldots,n)$ if $\diam\{x_1^k,\ldots,x_n^k\}\to0$ as $k\to+\infty$, and there is a $\rho\in(0,+\infty]$ such that,
for any $\varepsilon>0$, there exist an integer $k>\eps\iv$ and a point
$(a_1,\ldots,a_n)\in\eps\B_{X^n}$ such that
\begin{gather}
\label{D1.3-1}
\bigcap_{i=1}^n(\Omega_i-x_i^k-a_i)\cap(\rho\B)=\emptyset;
\end{gather}
\item
\label{D3.1.2}
stationary at sequences
$\{x_i^k\}\subset\Omega_i$ $(i=1,\ldots,n)$ if $\diam\{x_1^k,\ldots,x_n^k\}\to0$ as $k\to+\infty$, and, for any $\varepsilon>0$, there exist an integer $k>\eps\iv$, a $\rho\in(0,\varepsilon)$, and a point
$(a_1,\ldots,a_n)\in\eps\rho\B_{X^n}$ such that
condition \eqref{D1.3-1} is satisfied;
\item
\label{D3.1.3}
approximately stationary at a sequence $\{x^k\}\subset X$ if, for any $\varepsilon>0$, there exist an integer $k>\eps\iv$, a $\rho\in(0,\varepsilon)$,
and points $(x_1,\ldots,x_n)\in\widehat\Omega\cap B_\eps((x^k,\ldots,x^k)_n)$ and  $(a_1,\ldots,a_n)\in\eps\rho\B_{X^n}$ such that condition \eqref{D2.5.3-1} is satisfied.
\end{enumerate}
\end{definition}

Definition~\ref{D1.3} is a particular case of Definition~\ref{D3.1} with $x_i^k:=\bx$ for all $i=1,\ldots,n$ and $k\in\N$ in parts \eqref{D3.1.1} and \eqref{D3.1.2}, and $x^k:=\bx$ for all $k\in\N$ in part \eqref{D3.1.3}.

The number $\rho$ in part \eqref{D3.1.1} of Definition~\ref{D3.1} is an important quantitative measure of the extremality property.
In the sequel, if the property holds, we will sometimes specify that $\{\Omega_1,\ldots,\Omega_n\}$ is extremal at sequences $\{x_i^k\}\subset\Omega_i$ $(i=1,\ldots,n)$ with number $\rho$.

{The pairs of sets in the examples below are not extremal in the sense of Definition~\ref{D1.1} as their intersections are empty.
At the same time, they are extremal in the sense of Definition~\ref{D3.1}\,\eqref{D3.1.1} at appropriate sequences.}

\begin{example}
[Sequential extremality]
\label{E3.4}
\begin{enumerate}
\item
{The pair of sets $\Omega_1$ and
$\Omega_2$ in Example~\ref{E1.5} is extremal at the sequences $\{(k,1/k)\}\subset\Omega_1$ and $\{(k,0)\}\subset\Omega_2$.}
\item
\label{E3.4.1}
The pair of closed convex sets $\Omega_1:=\{(x,y)\mid y\ge e^{-x}\}$ and
$\Omega_2:=\{(x,y)\mid y\le0\}$ (see Figure~\ref{fig2}) is extremal at the pair of sequences $\{(k,e^{-k})\}\subset\Omega_1$ and $\{(k,0)\}\subset\Omega_2$.

\item
\label{E3.4.2}
The pair of closed sets $\Omega_1:=\{(x,y)\mid xy\ge1\}$ and $\Omega_2:=\{(x,y)\mid xy\le0\}$ (see Figure~\ref{fig2}) is extremal at the following pairs of sequences: 1) $\{(k,1/k)\}\subset\Omega_1$ and $\{(k,0)\}\subset\Omega_2$; 2) $\{(1/k,k)\}\subset\Omega_1$ and $\{(0,k)\}\subset\Omega_2$; 3) $\{(-k,-1/k)\}\subset\Omega_1$ and $\{(-k,0)\}\subset\Omega_2$; 4) $\{(1/k,-k)\}\subset\Omega_1$ and $\{(0,-k)\}\subset\Omega_2$.
The above four pairs of sequences determine four natural ``extremal directions''.
One can also consider various combinations of the above pairs of sequences, not related to any ``directions'', e.g., $\{((-1)^kk,(-1)^kk\iv)\}\subset\Omega_1$ and $\{((-1)^kk,0)\}\subset\Omega_2$.

\item
\label{E3.4.3}
The pair of closed sets $\Omega_1:=\{(x,y)\mid {x}y\ge{x^2+1}\}$ and $\Omega_2:=\{(x,y)\mid x(y-x)\le0\}$ (see Figure~\ref{fig3}) is extremal at the following pairs of sequences (determining four ``extremal directions''): 1) $\{(k,k+1/k)\}\subset\Omega_1$ and $\{(k,k)\}\subset\Omega_2$; 2)~$\{(1/k,k+1/k)\}\subset\Omega_1$ and $\{(0,k)\}\subset\Omega_2$; 3) $\{(-k,-1/k)\}\subset\Omega_1$ and $\{(-k,-k)\}\subset\Omega_2$; 4)~$\{(-1/k,-k-1/k)\}\subset\Omega_1$ and $\{(0,-k)\}\subset\Omega_2$.

\item
\label{E3.4.4}
The pair of sets $\Omega_1:=\{(x,\sin\frac{1}{x})\mid x\ne 0\}$ and $\Omega_2:=\{(x,\frac{1}{x}+\sin\frac{1}{x})\mid x\ne 0\}$ (see Figure~\ref{fig3}) is extremal at the pairs of sequences: 1)~$\{(k,\sin\frac{1}{k})\}\subset\Omega_1$ and $\{(k,\frac{1}{k}+\sin\frac{1}{k})\}\subset\Omega_2$;
2)~$\{(-k,-\sin\frac{1}{k})\}\subset\Omega_1$ and $\{(-k,-\frac{1}{k}-\sin\frac{1}{k})\}\subset\Omega_2$.
\end{enumerate}
\begin{figure}[H]
\centering
\includegraphics[width=.8\linewidth]{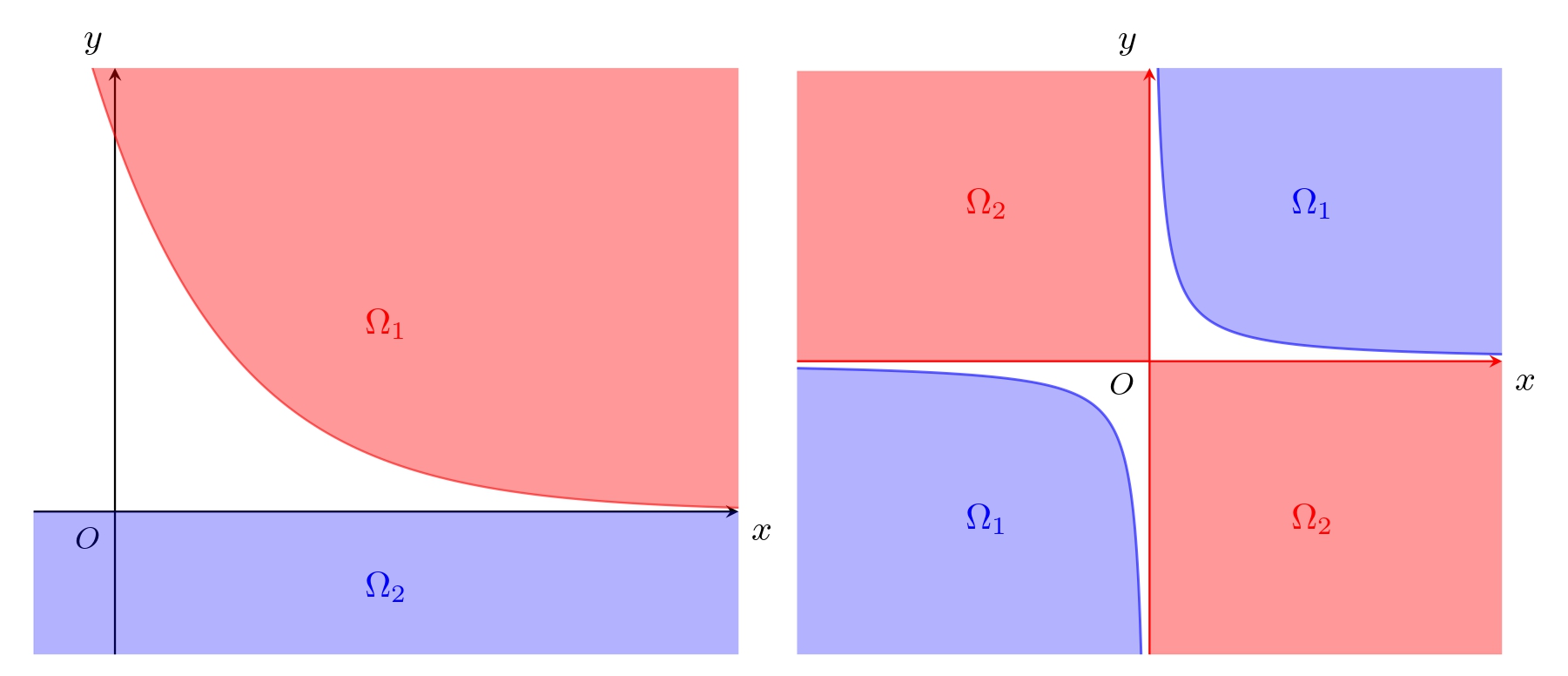}
\\
{\scriptsize
Extremal at sequences $\{(k,e^{-k})\}$ and $\{(k,0)\}$ \quad
Extremal at sequences $\{(1/k,k)\}$ and $\{(0,k)\}$}
\caption{Example~\ref{E3.4}\,\eqref{E3.4.1} and \eqref{E3.4.2}}
\label{fig2}
\end{figure}
\begin{figure}[H]
\centering
\includegraphics[width=.8\linewidth]{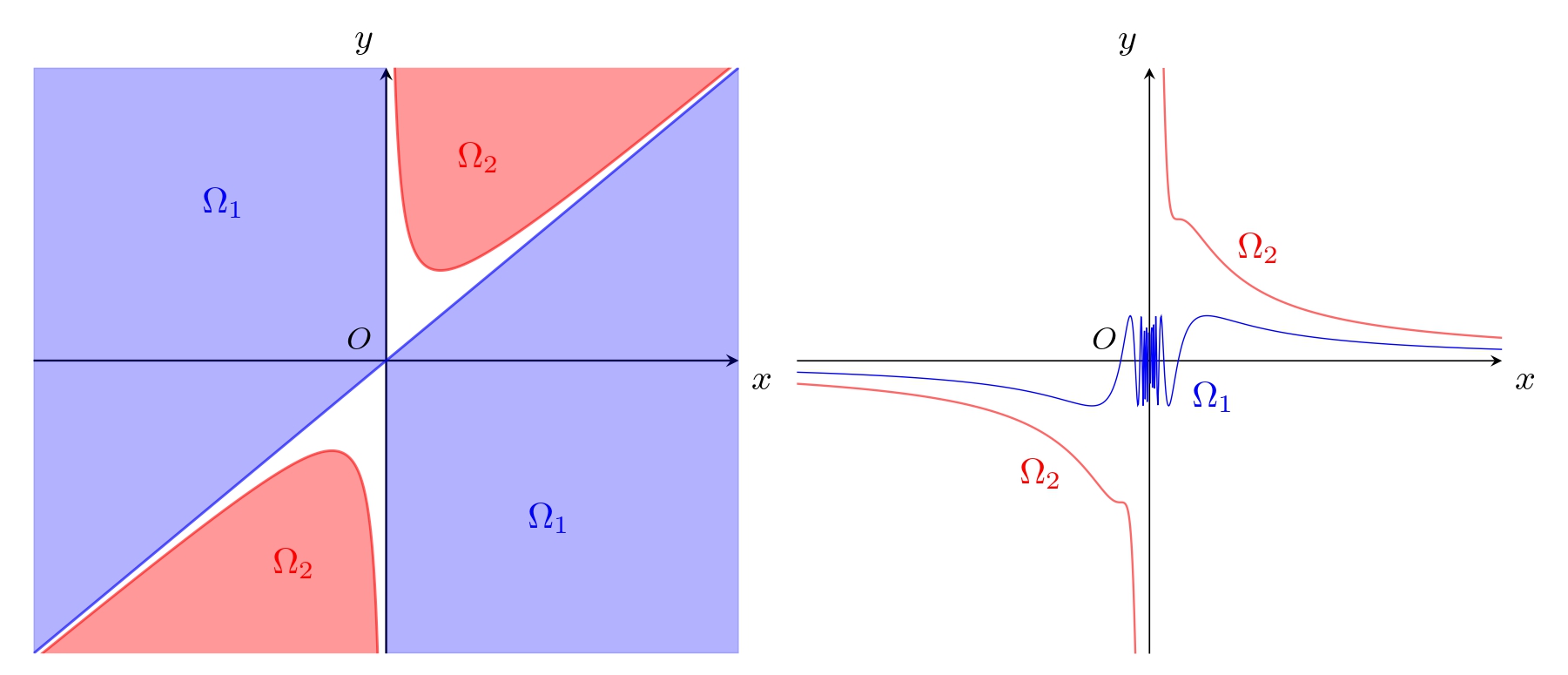}
\\
{\scriptsize
Extremal at sequences $\{(k,k+1/k)\}$ and $\{(k,k)\}$ \quad
Extremal at sequences $\{(k,\sin\frac1k)\}$ and $\{(k,\frac1k+\sin\frac1k)\}$}
\caption{Example~\ref{E3.4}\,\eqref{E3.4.3} and \eqref{E3.4.4}}
\label{fig3}
\end{figure}
\end{example}

\begin{remark}
\label{R02}
\begin{enumerate}
\item
Condition $\diam\{x_1^k,\ldots,x_n^k\}\to0$ as  $k\to+\infty$ in parts \eqref{D3.1.1} and \eqref{D3.1.2} of Definition~\ref{D3.1} is satisfied if all the sequences $\{x_i^k\}$ $(i=1,\ldots,n)$ converge to the same point.
On the other hand, this condition
ensures that, if any of the sequences $\{x_i^k\}$ $(i=1,\ldots,n)$ has a cluster point, then it is a common cluster point of all the sequences; otherwise, $\|x_i^k\|\to+\infty$ for all $i\in\{1,\ldots,n\}$.
\item
{Unlike parts \eqref{D3.1.1} and \eqref{D3.1.2} of Definition~\ref{D3.1} employing $n$ \emph{given} sequences
$\{x_i^k\}\subset\Omega_i$ such that $\diam\{x_1^k,\ldots,x_n^k\}\to0$ as $k\to+\infty$ $(i=1,\ldots,n)$, the property in part \eqref{D3.1.3} uses a single sequence $\{x^k\}$ which needs not lie in any of the sets.
At the same time, the latter definition ensures the existence of \emph{some} sequences $\{x_i^k\}\subset\Omega_i$ such that $\diam\{x_1^k,\ldots,x_n^k\}\to0$ as $k\to+\infty$ $(i=1,\ldots,n)$.}

\item
\label{R02.3}
Similarly to Definition~\ref{D1.3},
it holds \eqref{D3.1.1} $\Rightarrow$ \eqref{D3.1.2} in Definition~\ref{D3.1} and, if $\{\Omega_1,\ldots,\Omega_n\}$ is stationary at sequences
$\{x_i^k\}\subset\Omega_i$ $(i=1,\ldots,n)$, then, for each $i\in\{1,\ldots,n\}$, it is approximately stationary at the sequence $\{x_{i}^k\}$.
The latter claim can be strengthened as shown in Proposition~\ref{P3.4}\,\eqref{P3.4.4}.

\item
{If a property in Definition~\ref{D1.3} holds at some sequences (sequence), then it holds at their subsequences (its subsequence) generated by any sequence of integers $k_j\to+\infty$.}
\end{enumerate}
\end{remark}

The next proposition collects some elementary facts about the properties in Definition~\ref{D3.1}.

\begin{proposition}
\label{P3.4}
\begin{enumerate}
\item
\label{P3.4.1}
If $\{\Omega_1,\ldots,\Omega_n\}$ is
extremal at $\{x_i^k\}\subset\Omega_i$ $(i=1,\ldots,n)$, then it is extremal at any $\{x_i'^k\}\subset\Omega_i$ satisfying $\|x_i'^k-x_i^k\|\to0$ as $k\to+\infty$ $(i=1,\ldots,n)$.
\item
\label{P3.4.2}
If $\{\Omega_1,\ldots,\Omega_n\}$ is
stationary at $\{x_i^k\}\subset\Omega_i$ $(i=1,\ldots,n)$, then it is stationary at any $\{x_i'^k\}\subset\Omega_i$ satisfying $\|x_i'^k-x_i^k\|\to0$ as $k\to+\infty$ $(i=1,\ldots,n)$.
\item
\label{P3.4.3}
If $\{\Omega_1,\ldots,\Omega_n\}$ is
approximately stationary at $\{x^k\}\subset X$, then it is approximately stationary at any $\{x'^k\}\subset X$ satisfying $\|x'^k-x^k\|\to0$ as $k\to+\infty$.
\item
\label{P3.4.4}
Let $\{x_i^k\}\subset\Omega_i$, $\al_i\in\R$ $(i=1,\ldots,n)$, $\sum_{i=1}^n\al_i=1$, and
$x^k:=\sum_{i=1}^{n}\al_i x_i^k$ for all $k\in\N$.
Suppose that $\{\Omega_1,\ldots,\Omega_n\}$
is stationary at
$\{x_i^k\}$ $(i=1,\ldots,n)$.
Then it is approximately stationary at $\{x^k\}$.
\end{enumerate}
\end{proposition}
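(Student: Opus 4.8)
The plan is to prove all four assertions by one device: take the shifts, radius, index, and---in the approximate case---the intermediate points supplied by the hypothesis and \emph{transfer} them into a witness for the conclusion, absorbing the perturbation with the triangle inequality and the norm equivalence \eqref{C}. A preliminary fact needed for \eqref{P3.4.1} and \eqref{P3.4.2} is that the perturbed sequences again satisfy $\diam\{x_1'^k,\ldots,x_n'^k\}\to0$, which follows from $\norm{x_i'^k-x_j'^k}\le\norm{x_i'^k-x_i^k}+\norm{x_i^k-x_j^k}+\norm{x_j^k-x_j'^k}$. For part \eqref{P3.4.1}, the identity $\Omega_i-x_i'^k-a_i'=\Omega_i-x_i^k-(a_i'+(x_i'^k-x_i^k))$ shows that condition \eqref{D1.3-1} for $\{x_i'^k\}$ with $a_i':=a_i-(x_i'^k-x_i^k)$ and the \emph{same} radius $\rho$ is exactly \eqref{D1.3-1} for $\{x_i^k\}$ with shift $a_i$. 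Since $\rho$ is \emph{fixed} in the extremality definition, I would, given $\eps>0$, apply the extremality of $\{x_i^k\}$ with a sufficiently small $\eps'$ (which forces the witness index $k$ to be large) and bound $\vertiii{(a_1',\ldots,a_n')}\le\kappa_2\big(\max_i\norm{a_i}+\max_i\norm{x_i'^k-x_i^k}\big)<\eps$, the last term being small because $\norm{x_i'^k-x_i^k}\to0$ and $k$ is large.

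Parts \eqref{P3.4.3} and \eqref{P3.4.4} are clean because in \eqref{D2.5.3-1} the centre enters only through the proximity requirement, whereas the non-intersection involves the chosen points $x_i\in\Omega_i$. For \eqref{P3.4.3} I would keep the very same $x_i$, $a_i$ and $\rho$ produced by the approximate stationarity of $\{x^k\}$; the non-intersection is then unchanged, and only the proximity needs re-checking via $\vertiii{(x_1-x'^k,\ldots,x_n-x'^k)}\le\vertiii{(x_1-x^k,\ldots,x_n-x^k)}+\kappa_2\norm{x^k-x'^k}$ after slightly enlarging the tolerance. For \eqref{P3.4.4} I would take the intermediate points to be $x_i:=x_i^k\in\Omega_i$, so that \eqref{D2.5.3-1} is literally \eqref{D1.3-1} coming from the stationarity of $\{x_i^k\}$; the only thing to verify is $(x_1^k,\ldots,x_n^k)\in B_\eps((x^k,\ldots,x^k)_n)$, which holds since $\norm{x_i^k-x^k}=\big\|\sum_j\al_j(x_i^k-x_j^k)\big\|\le\big(\sum_j\abs{\al_j}\big)\diam\{x_1^k,\ldots,x_n^k\}\to0$, the required smallness again being delivered by the large index $k$ that stationarity provides.

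The delicate case, and the one I expect to be the main obstacle, is part \eqref{P3.4.2}. The absorption $a_i':=a_i-(x_i'^k-x_i^k)$ is again valid algebraically, but now the admissible shift must satisfy $\vertiii{(a_1',\ldots,a_n')}<\eps\rho$ with $\rho\in(0,\eps)$ \emph{shrinking}, so one must secure $\max_i\norm{x_i'^k-x_i^k}$ small \emph{relative to} $\eps\rho$. Since $\norm{x_i'^k-x_i^k}\to0$ is only an absolute rate, whereas the radius $\rho$ returned by the hypothesis at that index may be far smaller than the perturbation, inheriting the radius verbatim does not suffice: the scale matching between the perturbation and the produced $\rho$ is exactly where care is required. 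The plan here is to avoid a blind application of the hypothesis---fixing first, for the target $\eps$, a threshold governed by the constants $\kappa_1,\kappa_2$ of \eqref{C}---and then to select the witness index large enough (possibly passing to a subsequence, cf.\ Remark~\ref{R02}) that the perturbation is dominated by the radius actually realised, re-establishing the non-intersection at a compatible scale.
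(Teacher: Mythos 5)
Your parts \eqref{P3.4.1}, \eqref{P3.4.3} and \eqref{P3.4.4} are correct and follow essentially the same route as the paper: for \eqref{P3.4.1} the paper applies the hypothesis at an index $k>\eps\iv$ large enough that $\vertiii{(x_1'^k-x_1^k,\ldots,x_n'^k-x_n^k)}<\eps/2$, obtains $(a_1,\ldots,a_n)\in(\eps/2)\B_{X^n}$, and absorbs the perturbation by setting $a_i':=a_i+x_i^k-x_i'^k$, keeping the same $\rho$ --- exactly your absorption device; for \eqref{P3.4.3} the paper likewise keeps the witnesses $(x_1,\ldots,x_n)$, $\hat a$ and $\rho$ unchanged (the non-intersection \eqref{D2.5.3-1} does not involve the centre $x^k$) and only re-verifies the proximity with an $\eps/2$-split; and for \eqref{P3.4.4} the paper also takes $x_i:=x_i^k$, so that \eqref{D2.5.3-1} is literally \eqref{D1.3-1}, and checks $\|x_i-x^k\|\le n\max_j\abs{\al_j}\cdot\diam\{x_1^k,\ldots,x_n^k\}$ together with the $\kappa_2$-comparison from \eqref{C}, assuming without loss of generality that $k$ is large enough for this to be below $\eps/\kappa_2$ --- the same convex-combination estimate you give.

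The genuine gap is in part \eqref{P3.4.2}, and you have correctly located where it sits, but your proposed repair does not close it. You need $\vertiii{(a_1',\ldots,a_n')}<\eps\rho$ with $a_i'=a_i+x_i^k-x_i'^k$, hence $\kappa_2\max_i\|x_i'^k-x_i^k\|<(\eps-\eps')\rho$; but the radius $\rho$ is produced by the hypothesis \emph{after} you commit to $\eps'$, and nothing in Definition~\ref{D3.1}\,\eqref{D3.1.2} prevents the realised radii from shrinking arbitrarily fast relative to the witness indices (e.g.\ like $e^{-k}$ while $\|x_i'^k-x_i^k\|\sim 1/k$). Choosing $\eps'$ small, or passing to a subsequence as you suggest, only makes the witness index large; it gives you no lower bound on the realised $\rho$ in terms of the perturbation at that index, so ``the perturbation is dominated by the radius actually realised'' is an assumption, not a consequence --- the quantifier order defeats the plan. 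It is worth noting that the paper's own proof of \eqref{P3.4.2} consists of the single remark that the argument of \eqref{P3.4.1} carries over and that ``the fact that $\rho$ is chosen after $\eps$ does not affect the arguments''; this is precisely the step your analysis shows to be non-trivial, since in \eqref{P3.4.1} the shift budget $\eps$ is absolute and the absorption needs only $\max_i\|x_i'^k-x_i^k\|$ small in absolute terms, whereas in \eqref{P3.4.2} the budget $\eps\rho$ is tied to the unknown scale. So relative to the paper you have not missed an available idea --- the paper supplies none for this point --- but your proof of \eqref{P3.4.2} is incomplete as it stands: to finish it one must actually establish a scale matching between the perturbation and the radii delivered by the hypothesis (for instance, under an additional assumption that $\max_i\|x_i'^k-x_i^k\|/\rho_k\to0$ along a sequence of witnesses $(k,\rho_k)$), and without some such ingredient the transfer argument does not go through.
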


\begin{proof}
\begin{enumerate}
\item
Let $\{\Omega_1,\ldots,\Omega_n\}$ be
extremal at $\{x_i^k\}\subset\Omega_i$ $(i=1,\ldots,n)$ with some $\rho\in(0,+\infty]$.
Let $\{x_i'^k\}\subset\Omega_i$ and $\|x_i'^k-x_i^k\|\to0$ as $k\to+\infty$ $(i=1,\ldots,n)$.
By Definition~\ref{D3.1}\,\eqref{D3.1.1},
$\diam\{x_1^k,\ldots,x_n^k\}\to 0$ as $k\to+\infty$.
For all $i,j\in\{1,\ldots,n\}$ and $k\in\N$, we have
$
\|x_i'^k-x_j'^k\|
\le \|x_i'^k-x_i^k\|+\|x_i^k-x_j^k\|+\|x_j'^k-x_j^k\|
$.
Hence, $\diam\{x_1'^k,\ldots,x_n'^k\}\to0$ as $k\to+\infty$.

Let $\varepsilon>0$.
Then there is an integer $k>\eps\iv$, such that $\vertiii{(x_1'^k-x_1^k,\ldots,x_n'^k-x_n^k)}<\eps/2$, and a point $(a_1,\ldots,a_n)\in(\eps/2)\B_{X^n}$ such that
condition
\eqref{D1.3-1} is satisfied.
Set $a_i':=a_i+x_i^k-x_i'^k$ $(i=1,\ldots,n)$.
Then $(a_1',\ldots,a_n')\in\eps\B_{X^n}$ and,
in view of condition \eqref{D1.3-1}, we have
$\bigcap_{i=1}^n(\Omega_i-x_i'^k-a_i')\cap (\rho\B)=\es$.
Hence, $\{\Omega_1,\ldots,\Omega_n\}$ is extremal at $\{x_i'^k\}$ $(i=1,\ldots,n)$ (with the same~$\rho$).

\item
The proof of the assertion goes as above.
The fact that $\rho$ is chosen after~$\eps$ does not affect the arguments.

\item
Let $\{\Omega_1,\ldots,\Omega_n\}$ be approximately stationary at $\{x^k\}\subset X$, and $\|x'^k-x^k\|\to0$ as $k\to+\infty$.
Let $\varepsilon>0$.
By Definition~\ref{D3.1}\,\eqref{D3.1.3}, there exist an integer $k>\eps\iv$, a $\rho\in(0,\varepsilon)$, and points $(x_1,\ldots,x_n)\in\widehat\Omega\cap B_{\eps/2}((x_k,\ldots,x_k)_n)$ and $(a_1,\ldots,a_n)\in\eps\rho\B_{X^n}$ such that $\vertiii{(x'^k-x^k,\ldots,x'^k-x^k)_n}
<\varepsilon/2$, and
condition \eqref{D2.5.3-1} is satisfied.
Then
$\vertiii{(x_{1}-x'^k,\ldots,x_{n}-x'^k)}
<\varepsilon$.
Hence, $\{\Omega_1,\ldots,\Omega_n\}$ is approximately stationary at~$\{x'^k\}$.
\item
Let $\varepsilon>0$.
By Definition~\ref{D3.1}\,\eqref{D3.1.2},
$\diam\{x_1^k,\ldots,x_n^k\}\to 0$ as $k\to+\infty$, and there exist an integer $k>\eps\iv$, a $\rho\in(0,\varepsilon)$, and a point $(a_1,\ldots,a_n)\in\eps\rho\B_{X^n}$ such that condition \eqref{D1.3-1} is satisfied.
Hence, condition \eqref{D2.5.3-1} holds true with $x_i:=x_i^k$ $(i=1,\ldots,n)$.
Let the second inequality in \eqref{C} be satisfied with some $\kappa_2>0$.
Without loss of generality, we can assume that
$n\kappa_2\max_{1\le j\le n}|\al_j|\cdot\diam\{x_1^k,\ldots,x_n^k\}<\varepsilon$.
For each $i\in\{1,\ldots,n\}$, we have
\begin{align*}
\|x_{i}-x^k\|=\Big\|\sum_{j=1}^n\al_j(x_{i}-x_j^k)\Big\| &\le\sum_{j=1}^n|\al_j|\cdot\|x_{i}-x_j^k\|
\\&\le n\max_{1\le j\le n}|\al_j|\cdot \diam\{x_1^k,\ldots,x_n^k\}<\eps/{\kappa_2}.
\end{align*}
Hence, $\vertiii{(x_{1}-x^k,\ldots,x_{n}-x^k)} \le\kappa_2\max_{1\le i\le n}\|x_{i}-x^k\|<\eps$.
Thus, $\{\Omega_1,\ldots,\Omega_n\}$ is approximately stationary at $\{x^k\}$.
\end{enumerate}
\end{proof}
\if{
\AK{24/01/25.
Should the last two assertions be moved into Remark~\ref{R02}?}
}\fi
{As a consequence of Proposition~\ref{P3.4}\,\eqref{P3.4.1}--\eqref{P3.4.3},
if the sequences
in Definition~\ref{D3.1} converge to a point $\bx\in\bigcap_{i=1}^n\Omega_i$, then}
the corresponding properties in Definitions~\ref{D1.3} and \ref{D3.1} are equivalent.

\begin{corollary}
\label{P1.4}
Let $\bx\in\bigcap_{i=1}^n\Omega_i$,
$\Omega_i\ni x_i^k\to\bx$ $(i=1,\ldots,n)$ and $X\ni x^k\to\bx$ as $k\to+\infty$.
The collection $\{\Omega_1,\ldots,\Omega_n\}$ is \begin{enumerate}
\item
\label{P1.4.1}
extremal at $\{x_i^k\}$ $(i=1,\ldots,n)$ if and only if it is extremal at $\bx$;
\item
\label{P1.4.2}
stationary at $\{x_i^k\}$ $(i=1,\ldots,n)$ if and only if it is stationary at $\bx$;
\item
\label{P1.4.3}
approximately stationary at $\{x^k\}$ if and only if it is approximately stationary at $\bx$.
\end{enumerate}
\end{corollary}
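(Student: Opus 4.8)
The plan is to reduce each of the three equivalences to a single application of the corresponding part of Proposition~\ref{P3.4}, exploiting the fact---recorded right after Definition~\ref{D3.1}---that Definition~\ref{D1.3} is precisely the instance of Definition~\ref{D3.1} obtained by taking the constant sequences $x_i^k:=\bx$ $(i=1,\ldots,n)$ in parts~\eqref{D3.1.1} and~\eqref{D3.1.2}, and $x^k:=\bx$ in part~\eqref{D3.1.3}. Thus, for part~\eqref{P1.4.1}, ``extremal at $\bx$'' means exactly ``extremal at the constant sequences $x_i^k\equiv\bx$'', and the task is to show that extremality at $\{x_i^k\}$ is equivalent to extremality at these constant sequences. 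The two sets of sequences differ by a vanishing amount, since $\|x_i^k-\bx\|\to0$ $(i=1,\ldots,n)$, so Proposition~\ref{P3.4}\,\eqref{P3.4.1} applies in each direction.

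Concretely, for the ``if'' direction of part~\eqref{P1.4.1}, I would start from extremality at the constant sequences $x_i^k\equiv\bx$ and apply Proposition~\ref{P3.4}\,\eqref{P3.4.1} with the perturbed sequences $x_i'^k:=x_i^k\in\Omega_i$; since $\|x_i'^k-\bx\|=\|x_i^k-\bx\|\to0$, the collection is extremal at $\{x_i^k\}$ (with the same $\rho$). For the ``only if'' direction, I would run the same proposition with the roles reversed: starting from extremality at $\{x_i^k\}$ and taking the perturbed sequences to be the constant $\bx\in\bigcap_{i=1}^n\Omega_i$, the hypothesis $\|\bx-x_i^k\|\to0$ yields extremality at the constant sequences, i.e.\ extremality at $\bx$. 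Parts~\eqref{P1.4.2} and~\eqref{P1.4.3} follow verbatim, replacing Proposition~\ref{P3.4}\,\eqref{P3.4.1} by~\eqref{P3.4.2} and~\eqref{P3.4.3}, respectively; in part~\eqref{P1.4.3} the single sequence $\{x^k\}$ with $\|x^k-\bx\|\to0$ plays the role of the perturbation of the constant sequence $x^k\equiv\bx$.

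The argument is essentially bookkeeping, so I do not anticipate a genuine obstacle; the only points that must be checked are the admissibility of the constant sequences and the matching of the $\diam\to0$ requirements. For the constant sequences one has $\diam\{\bx,\ldots,\bx\}=0$ for every $k$, and for the convergent sequences $\diam\{x_1^k,\ldots,x_n^k\}\to0$ automatically (as noted in Remark~\ref{R02}); hence the defining condition in parts~\eqref{D3.1.1} and~\eqref{D3.1.2} is met throughout, and no compatibility issue arises. I would also record that Proposition~\ref{P3.4}\,\eqref{P3.4.1} preserves the quantitative number $\rho$, so extremality ``with number $\rho$'' transfers in both directions without change.
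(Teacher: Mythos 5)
Your proposal is correct and is exactly the paper's argument: the paper states Corollary~\ref{P1.4} as an immediate consequence of Proposition~\ref{P3.4}\,\eqref{P3.4.1}--\eqref{P3.4.3} combined with the observation (made right after Definition~\ref{D3.1}) that the properties at $\bx$ in Definition~\ref{D1.3} are the constant-sequence instances of Definition~\ref{D3.1}, applying the stability under vanishing perturbations in both directions just as you do. Your additional checks (membership of the constant sequences, the automatic $\diam\to0$ condition, and preservation of the number $\rho$) are the right details to verify and are consistent with the paper.
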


\begin{remark}
When $\Omega_1,\ldots,\Omega_n$ are closed, conditions $\Omega_i\ni x_i^k\to\bx$ $(i=1,\ldots,n)$ automatically yield $\bx\in\bigcap_{i=1}^n\Omega_i$.
\end{remark}

Note that Definition \ref{D3.1} does not assume the sequences to be convergent or even bounded.
Unbounded sets and sequences are of special interest in this paper.
An unbounded sequence can define a certain direction, e.g., $\{(k,k)\}\subset\R^2$, but this is not a requirement;
consider, e.g., $\{(k,k^2)\}$ or $\{((-1)^kk,0)\}$.

In the convex case, the properties in Definition~\ref{D3.1} admit simpler representations and are mostly equivalent.

\begin{proposition}
[Sequential extremality and stationarity: convex case]
\label{P3.5}
Let $\Omega_1,\ldots,\Omega_n$ be convex, 	
$\{x_i^k\}\subset\Omega_i$ $(i=1,\ldots,n)$ and  $\diam\{x_1^k,\ldots,x_n^k\}\to0$ as $k\to+\infty$.
The following assertions are equivalent:
\begin{enumerate}
\item
\label{P3.5.1}
$\{\Omega_1,\ldots,\Omega_n\}$ is extremal at  $\{x^k_i\}$ $(i=1,\ldots,n)$ with any $\rho\in(0,+\infty)$;
\item
\label{P3.5.2}
$\{\Omega_1,\ldots,\Omega_n\}$ is extremal at  $\{x^k_i\}$ $(i=1,\ldots,n)$ with some $\rho\in(0,+\infty)$;
\item
\label{P3.5.3}
$\{\Omega_1,\ldots,\Omega_n\}$ is stationary at  $\{x^k_i\}$ $(i=1,\ldots,n)$;
\item
\label{P3.5.4}
for any $\eps,\rho\in(0,+\infty)$, there exist an integer $k>\eps\iv$, and a point $(a_1,\ldots,a_n)\in\eps\B_{X^n}$ such that condition \eqref{D1.3-1} is satisfied. \end{enumerate}	
\end{proposition}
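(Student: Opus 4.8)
The plan is to prove the equivalences by establishing the cycle \eqref{P3.5.1}\folgt\eqref{P3.5.2}\folgt\eqref{P3.5.3}\folgt\eqref{P3.5.1}, and to note separately that \eqref{P3.5.1} and \eqref{P3.5.4} are \emph{the same} quantified assertion. The whole argument rests on one \emph{scaling observation} that is the only place convexity is used. For fixed $k\in\N$ put $\Omega_i^k:=\Omega_i-x_i^k$; since $x_i^k\in\Omega_i$, each $\Omega_i^k$ is convex and contains the origin, hence is star-shaped about $0$, i.e.\ $\lambda\Omega_i^k\subseteq\Omega_i^k$ for $\lambda\in[0,1]$. I would then show: if $\bigcap_{i=1}^n(\Omega_i^k-a_i)\cap(r\B)=\es$ for some $r>0$ and $(a_1,\ldots,a_n)\in X^n$, then for every $s>r$ one has $\bigcap_{i=1}^n(\Omega_i^k-(s/r)a_i)\cap(s\B)=\es$. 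Indeed, if some $z$ with $\|z\|<s$ satisfied $z+(s/r)a_i\in\Omega_i^k$ for all $i$, star-shapedness with $\lambda=r/s\in(0,1)$ would give $(r/s)z+a_i\in\Omega_i^k$, so $(r/s)z\in\bigcap_i(\Omega_i^k-a_i)$ with $\|(r/s)z\|<r$, contradicting the hypothesis. For $s\le r$ the same conclusion holds trivially with the \emph{same} $a_i$ because $s\B\subseteq r\B$. Thus emptiness at one radius propagates to every radius, rescaling the perturbation by $s/r$ when the radius grows.

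The easy links come next. The implication \eqref{P3.5.1}\folgt\eqref{P3.5.2} is immediate (restrict to one admissible $\rho$), and \eqref{P3.5.2}\folgt\eqref{P3.5.3} is the general fact recorded in Remark~\ref{R02}\,\eqref{R02.3} that sequential extremality always implies sequential stationarity; this uses only inclusions of balls, not convexity, so it can be cited directly (or reproved in one line by choosing in \eqref{D3.1.2} a small $\rho<\min\{\eps,1\}$ and invoking \eqref{D3.1.1} at level $\eps\rho$).

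The crux is \eqref{P3.5.3}\folgt\eqref{P3.5.1}. Here I would fix an arbitrary target $\rho\in(0,+\infty)$ and $\eps>0$ and produce the data required by Definition~\ref{D3.1}\,\eqref{D3.1.1}. Choose $\eps'>0$ so small that $\eps'<\eps$, $\eps'<\rho$ and $\rho\eps'<\eps$, e.g.\ $\eps':=\frac12\min\{\eps,\rho,\eps/\rho\}$. Applying stationarity at level $\eps'$ yields an integer $k>\eps'^{-1}$, a radius $r\in(0,\eps')$ and $(a_1,\ldots,a_n)\in\eps'r\B_{X^n}$ with $\bigcap_i(\Omega_i^k-a_i)\cap(r\B)=\es$. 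Since $r<\eps'<\rho$, the scaling observation with $s:=\rho>r$ gives $\bigcap_i(\Omega_i^k-(\rho/r)a_i)\cap(\rho\B)=\es$. Setting $a_i':=(\rho/r)a_i$ we get $\vertiii{(a_1',\ldots,a_n')}=(\rho/r)\vertiii{(a_1,\ldots,a_n)}<(\rho/r)\eps'r=\rho\eps'<\eps$, while $k>\eps'^{-1}>\eps^{-1}$. As the proposition's standing hypothesis already provides $\diam\{x_1^k,\ldots,x_n^k\}\to0$, this is precisely extremality at $\{x_i^k\}$ with this (arbitrary) $\rho$, proving \eqref{P3.5.1}. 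Finally, \eqref{P3.5.1}\iff\eqref{P3.5.4}: expanding Definition~\ref{D3.1}\,\eqref{D3.1.1}, assertion \eqref{P3.5.1} reads ``for all $\rho>0$ and all $\eps>0$ there are $k>\eps^{-1}$ and $(a_1,\ldots,a_n)\in\eps\B_{X^n}$ with \eqref{D1.3-1}'', which is \eqref{P3.5.4} verbatim, the requirement $\diam\{x_1^k,\ldots,x_n^k\}\to0$ being part of the hypotheses.

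I expect the only genuinely nontrivial point to be the scaling observation together with, inside \eqref{P3.5.3}\folgt\eqref{P3.5.1}, the bookkeeping that keeps the rescaled perturbation $(a_1',\ldots,a_n')$ inside $\eps\B_{X^n}$ while preserving $k>\eps^{-1}$; convexity enters \emph{only} through the star-shapedness of the translated sets $\Omega_i-x_i^k$ about the origin, which is exactly what lets emptiness at a shrinking radius be blown up to any prescribed radius.
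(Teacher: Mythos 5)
Your proof is correct and follows essentially the same route as the paper: your scaling observation (emptiness at a small radius propagates to any larger radius after rescaling the perturbation, via star-shapedness of $\Omega_i-x_i^k$ about the origin) is exactly the contrapositive of the paper's key convexity step $tx'+x_i^k+a_i'=t(x'+x_i^k+a_i'/t)+(1-t)x_i^k\in\Omega_i$, which the paper uses to prove \eqref{P3.5.3}~\folgt~\eqref{P3.5.4} by contradiction while you prove the logically identical implication \eqref{P3.5.3}~\folgt~\eqref{P3.5.1} directly. The remaining links (\eqref{P3.5.4}~\iff~\eqref{P3.5.1}~\folgt~\eqref{P3.5.2}~\folgt~\eqref{P3.5.3}) are handled in both arguments as immediate consequences of the definitions and Remark~\ref{R02}\,\eqref{R02.3}.
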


\begin{proof}
The relations \eqref{P3.5.4} \iff\ \eqref{P3.5.1} $\Rightarrow$ \eqref{P3.5.2} $\Rightarrow$ \eqref{P3.5.3} are direct consequences of the definitions (cf. Remark~\ref{R02}\;\eqref{R02.3}).
We now prove implication \eqref{P3.5.3} $\Rightarrow$ \eqref{P3.5.4}.
Suppose that \eqref{P3.5.4} does not hold, i.e., there exist $\varepsilon,\rho\in(0,+\infty)$ such that
\begin{gather}
\label{P3.5P1}
\bigcap_{i=1}^n(\Omega_i-x_i^k-a_i)\cap (\rho\B)\ne\emptyset
\end{gather}	
for all integers $k>\eps\iv$ and points $(a_1,\ldots,a_n)\in\varepsilon\B_{X^n}$.
Set $\varepsilon':=\min\{\eps,\varepsilon/\rho\}$, and
take arbitrarily an integer $k>\eps'{}\iv$, a positive $\rho'<\varepsilon'$ and a point $(a_1',\ldots,a_n')\in\varepsilon'\rho'\B_{X^n}$.
Set $t:={\rho'}/{\rho}$.
Then $\vertiii{(a_1'/t,\ldots,a_n'/t)}<\varepsilon'\rho\le\eps$, and condition \eqref{P3.5P1} implies the existence of an $x'\in\rho\B$ such that
$x'+x_i^k+a_i'/t\in\Omega_i$ $(i=1,\ldots,n)$.
Then $tx'\in\rho'\B$ and
$tx'+x_i^k+a_i'=t(x'+x_i^k+a_i'/t)+(1-t)x_i^k\in \Omega_i$ $(i=1,\ldots,n)$.
Hence,
$tx'\in\bigcap_{i=1}^n(\Omega_i-x_i^k-a_i')\cap (\rho'\B)$,
and consequently, assertion \eqref{P3.5.3} does not hold.
\end{proof}

\begin{proposition}
[Sequential approximate stationarity: convex case]
\label{P3.6}
Let $\Omega_1,\ldots,\Omega_n$ be convex, 	
and $\{x^k\}\subset X$.
\begin{enumerate}
\item\label{P3.6.1}
$\{\Omega_1,\ldots,\Omega_n\}$ is
approximately sta\-tionary at $\{x^k\}$ if and only if for any $\varepsilon,\rho\in(0,+\infty)$, there exist an integer $k>\eps\iv$, and points $(x_1,\ldots,x_n)\in\widehat\Omega\cap B_\eps((x^k,\ldots,x^k)_n)$ and $(a_1,\ldots,a_n)\in\eps\B_{X^n}$ such that condition \eqref{D2.5.3-1} is satisfied.
\item	
If $\{\Omega_1,\ldots,\Omega_n\}$ is approximately sta\-tionary at $\{x^k\}$, then there exist a subsequence ${\{x^{k_j}\}}$, $j=1,2\ldots$, and sequences $\{x^j_i\}\subset\Omega_i$ $(i=1,\ldots,n)$ such that $x^j_i-x^{k_j}\to0$ as $j\to+\infty$ $(i=1,\ldots,n)$, and $\{\Omega_1,\ldots,\Omega_n\}$ is extremal at $\{x^j_i\}$ $(i=1,\ldots,n)$ with any $\rho>0$.
\end{enumerate}
\end{proposition}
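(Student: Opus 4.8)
I would prove the two assertions in turn. Throughout, write $(\star)$ for the condition on the right-hand side of the equivalence in the first assertion (arbitrary $\rho$, with $(a_1,\ldots,a_n)\in\eps\B_{X^n}$). The implication from $(\star)$ to approximate stationarity is direct. Given $\eps>0$ as in Definition~\ref{D3.1}\,\eqref{D3.1.3}, I would fix any $\rho\in(0,\min\{1,\eps\})$ and invoke $(\star)$ with the pair $(\eps\rho,\rho)$ in place of $(\eps,\rho)$. Since $\eps\rho<\eps$, this produces an integer $k>(\eps\rho)\iv>\eps\iv$, a point $(x_1,\ldots,x_n)\in\widehat\Omega\cap B_{\eps\rho}((x^k,\ldots,x^k)_n)\subset\widehat\Omega\cap B_\eps((x^k,\ldots,x^k)_n)$ and $(a_1,\ldots,a_n)\in\eps\rho\B_{X^n}$ satisfying \eqref{D2.5.3-1} with this $\rho<\eps$, which is exactly the data required by Definition~\ref{D3.1}\,\eqref{D3.1.3}.

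The converse is the substantive part, and I would establish it by contraposition, following the scaling argument in the proof of the implication \eqref{P3.5.3} \folgt\ \eqref{P3.5.4} of Proposition~\ref{P3.5}. Suppose $(\star)$ fails: there are $\eps,\rho>0$ such that the intersection in \eqref{D2.5.3-1} is nonempty for every integer $k>\eps\iv$, every $(x_1,\ldots,x_n)\in\widehat\Omega\cap B_\eps((x^k,\ldots,x^k)_n)$ and every $(a_1,\ldots,a_n)\in\eps\B_{X^n}$. This property persists when $\eps$ is decreased and when $\rho$ is increased (enlarging the ball $\rho\B$ cannot make a nonempty intersection empty), so I may assume $\eps\le1\le\rho$ and put $\eps':=\eps/\rho$, noting $\eps'\le\eps$ and $\eps'\le\rho$. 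Fix an arbitrary integer $k>\eps'\iv$, a $\rho'\in(0,\eps')$, a point $(x_1,\ldots,x_n)\in\widehat\Omega\cap B_{\eps'}((x^k,\ldots,x^k)_n)$ and $(a_1,\ldots,a_n)\in\eps'\rho'\B_{X^n}$, and set $t:=\rho'/\rho\in(0,1)$. Then $(a_1/t,\ldots,a_n/t)\in\eps\B_{X^n}$ and $(x_1,\ldots,x_n)\in\widehat\Omega\cap B_\eps((x^k,\ldots,x^k)_n)$, so the failed-$(\star)$ hypothesis yields an $x'\in\rho\B$ with $x'+x_i+a_i/t\in\Omega_i$ for all $i$. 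Convexity then gives $tx'+x_i+a_i=t(x'+x_i+a_i/t)+(1-t)x_i\in\Omega_i$, while $tx'\in\rho'\B$; hence $\bigcap_{i=1}^n(\Omega_i-x_i-a_i)\cap(\rho'\B)\ne\emptyset$. As $k,\rho',(x_i),(a_i)$ were arbitrary, the collection is not approximately stationary at $\{x^k\}$. The main obstacle, exactly as in Proposition~\ref{P3.5}, is securing $t\in(0,1)$ so that the convex combination stays in $\Omega_i$, which is the sole purpose of the normalisation $\eps\le1\le\rho$.

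For the second assertion I would iterate the characterisation $(\star)$ just established. Fixing $\rho=1$ once and for all, I construct the indices recursively: having chosen $k_{j-1}$, put $N_j:=\max\{j,k_{j-1}\}+1$ and apply $(\star)$ with $\eps=1/N_j$ and $\rho=1$ to obtain $k_j>N_j$ (so that $k_1<k_2<\cdots$), a point $(x_1^j,\ldots,x_n^j)\in\widehat\Omega\cap B_{1/N_j}((x^{k_j},\ldots,x^{k_j})_n)$ and $(a_1^j,\ldots,a_n^j)\in(1/N_j)\B_{X^n}$ with $\bigcap_{i=1}^n(\Omega_i-x_i^j-a_i^j)\cap\B=\emptyset$. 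By \eqref{C} and $N_j\to+\infty$ one gets $\|x_i^j-x^{k_j}\|\to0$, hence $\diam\{x_1^j,\ldots,x_n^j\}\to0$, so the sequences $\{x_i^j\}$ are admissible in Definition~\ref{D3.1}\,\eqref{D3.1.1}. For any $\eps>0$, picking $j$ with $j>\eps\iv$ and $1/N_j<\eps$ gives $(a_1^j,\ldots,a_n^j)\in\eps\B_{X^n}$ together with the empty intersection above at the fixed $\rho=1$; thus $\{\Omega_1,\ldots,\Omega_n\}$ is extremal at $\{x_i^j\}$ with $\rho=1$. Since the sets are convex and $\diam\{x_1^j,\ldots,x_n^j\}\to0$, Proposition~\ref{P3.5} upgrades extremality with this single $\rho$ to extremality with any $\rho\in(0,+\infty)$. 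The only bookkeeping here is forcing the indices $k_j$ to be strictly increasing, so that $\{x^{k_j}\}$ is a genuine subsequence, which is precisely what the choice of $N_j$ arranges.
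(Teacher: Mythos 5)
Your proof is correct and takes essentially the same route as the paper: part (i) by contraposition with the scaling $t=\rho'/\rho$ and the convex combination $t(x'+x_i+a_i'/t)+(1-t)x_i$ (the paper sets $\eps':=\min\{\eps,\eps/\rho\}$ where you normalise $\eps\le1\le\rho$, your version making the requirement $t\in(0,1)$ explicit), and part (ii) by iterating the characterisation with $\eps$ of order $1/j$ to get extremality at a single fixed radius and then invoking Proposition~\ref{P3.5} to upgrade to arbitrary $\rho$, exactly as in the paper. Your extra bookkeeping forcing $k_j$ to be strictly increasing is a harmless refinement of the paper's choice $k_j>j$.
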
	

\begin{proof}
\begin{enumerate}
\item
The ``if'' part is obvious.
We prove the ``only if'' part.
Suppose that there exist $\varepsilon,\rho\in(0,+\infty)$ such that
\begin{gather}
\label{P3.6P1}
\bigcap_{i=1}^n(\Omega_i-x_i-a_i)\cap (\rho\B)\ne\emptyset
\end{gather}	
for all integers $k>\eps\iv$, and points $(x_1,\ldots,x_n)\in\widehat\Omega\cap B_\eps((x^k,\ldots,x^k)_n)$ and $(a_1,\ldots,a_n)\in\varepsilon\B_{X^n}$.
Set $\varepsilon':=\min\{\eps,\varepsilon/\rho\}$, and
take arbitrarily an integer $k>\eps'{}\iv$, a positive ${\rho'<\varepsilon'}$ and points $(x_1,\ldots,x_n)\in\widehat\Omega\cap B_{\eps'}((x^k,\ldots,x^k)_n)$ and $(a_1',\ldots,a_n')\in\varepsilon'\rho'\B_{X^n}$.
Set $t:={\rho'}/{\rho}$.
Then $\vertiii{(a_1'/t,\ldots,a_n'/t)}<\varepsilon'\rho\le\eps$, and condition \eqref{P3.6P1} implies the existence of an $x'\in\rho\B$ such that
$x'+x_i+a_i'/t\in\Omega_i$ $(i=1,\ldots,n)$.
Then $tx'\in\rho'\B$ and
$tx'+x_i+a_i'=t(x'+x_i+a_i'/t)+(1-t)x_i\in \Omega_i$ $(i=1,\ldots,n)$.
Hence,
$tx'\in\bigcap_{i=1}^n(\Omega_i-x_i-a_i')\cap (\rho'\B)$,
and consequently,
$\{\Omega_1,\ldots,\Omega_n\}$ is not
approximately sta\-tionary at $\{x^k\}$.
\item	
Suppose that $\{\Omega_1,\ldots,\Omega_n\}$ is approximately sta\-tionary at $\{x^k\}$.
Let $\eps,\rho\in(0,+\infty)$.
By \eqref{P3.6.1},
for any $j\in\N$, there exist an integer $k_j>j$, and points $(x_1^j,\ldots,x_n^j)\in\widehat\Omega\cap B_{1/j}((x^{k_j},\ldots,x^{k_j})_n)$ and $(a_1^j,\ldots,a_n^j)\in(1/j)\B_{X^n}$ such that \begin{gather}
\label{P3.6P2}
\bigcap_{i=1}^n(\Omega_i-x_i^j-a_i^j)\cap (\rho\B)=\emptyset.
\end{gather}	
Thus, $k_j\to+\infty$, $x^j_i-x^{k_j}\to0$ $(i=1,\ldots,n)$ and $\diam\{x_1^j,\ldots,x_n^j\}\to0$  as $j\to+\infty$.
Take an integer $j>\eps\iv$.
Then $(a_1^j,\ldots,a_n^j)\in\eps\B_{X^n}$ and, in view of \eqref{P3.6P2}, assertion \eqref{P3.5.4} in Proposition~\ref{P3.5} is satisfied.
The conclusion follows from Proposition~\ref{P3.5}.
\end{enumerate}
\end{proof}

\section{Sequential extremal principle}
\label{S3}

In this section, we study a quantitative version of the sequential approximate stationarity property in Definition~\ref{D3.1}\,\eqref{D3.1.3} and prove dual necessary conditions in the form of generalized separation.

\begin{definition}
[Sequential approximate $\al$-stationarity]
\label{D3.5}
Let $\al>0$.
The collection $\{\Omega_1,\ldots,\Omega_n\}$ is approximately $\al$-stationary at a sequence $\{x^k\}\subset X$ if, for any $\varepsilon>0$, there exist an integer $k>\eps\iv$, a $\rho\in(0,\varepsilon)$, and points $(x_1,\ldots,x_n)\in\widehat\Omega\cap B_\eps((x^k,\ldots,x^k)_n)$ and $(a_1,\ldots,a_n)\in\al\rho\B_{X^n}$ such that
condition \eqref{D2.5.3-1} is satisfied.
\end{definition}

Clearly, $\{\Omega_1,\ldots,\Omega_n\}$ is approximately stationary at $\{x^k\}\subset X$ if and only if it is approximately $\al$-sta\-tionary at $\{x^k\}$ for all $\al>0$.
An analogue of Corollary~\ref{P1.4}\,\eqref{P1.4.3} is true also for the sequential approximate $\al$-stationarity (for the definition of approximate $\al$-stationarity at a point we refer the reader to \cite{CuoKru25}).

As an application of the generalized separation Theorem~\ref{C3.9}, we prove dual necessary conditions for the sequential approximate $\al$-sta\-tio\-narity.

\begin{theorem}
[Sequential approximate $\al$-stationarity: generalized separation]
\label{T4.3}
Let $X$ be Banach, $\Omega_1,\ldots,\Omega_n$
be closed, and $\al>0$.
Suppose that $\{\Omega_1,\ldots,\Omega_n\}$ is approximately $\al$-sta\-tionary at $\{x^k\}\subset X$.
The following assertions hold true:
\begin{enumerate}
\item
\label{T4.2-1}
for any $\varepsilon>0$, $\be>\al$ and $\tau\in(0,1)$, there exist an integer $k>\eps\iv$, and points
$\hat a:=(a_1,\ldots,a_n)\in\eps\B_{X^n}$,
$\hat x:=(x_1,\ldots,x_n),\; \hat x':=(x'_1,\ldots,x'_n)\in\widehat\Omega\cap B_\eps((x^k,\ldots,x^k)_n)$, $x_0\in\eps\B_X$ and $\hat x^*:=(x_1^*,\ldots,x_n^*)\in N^C_{\widehat\Omega}(\hat x)$ such that
\begin{gather}
\label{T4.3-5}
\Big\|{\sum_{i=1}^nx_i^*}\Big\|<\be,\quad
\vertiii{\hat x^*}=1,\\
\label{T4.3-2}
\ang{\hat x^*,(x_0,\ldots,x_0)_n+\hat a+\hat x'-\hat x}
>\tau\vertiii{(x_0,\ldots,x_0)_n+\hat a+\hat x'-\hat x};
\end{gather}
\item
\label{T4.2-2}
if $X$ is Asplund,
then $N^C$ in \eqref{T4.2-1} can be replaced by $N^F$.
\end{enumerate}
\end{theorem}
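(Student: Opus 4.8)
The plan is to apply the generalized separation Theorem~\ref{C3.9} to an appropriately chosen configuration extracted from the approximate $\al$-stationarity hypothesis, and then unwind the conclusions. First I would fix $\eps>0$, $\be>\al$ and $\tau\in(0,1)$. Since $\{\Omega_1,\ldots,\Omega_n\}$ is approximately $\al$-stationary at $\{x^k\}$, for a suitably small auxiliary parameter $\eps'\in(0,\eps)$ (to be calibrated below) I obtain an integer $k>\eps'{}\iv$, a $\rho\in(0,\eps')$, and points $\hat x':=(x_1',\ldots,x_n')\in\widehat\Omega\cap B_{\eps'}((x^k,\ldots,x^k)_n)$ and $\hat a:=(a_1,\ldots,a_n)\in\al\rho\B_{X^n}$ such that
\[
\bigcap_{i=1}^n(\Omega_i-x_i'-a_i)\cap(\rho\B)=\emptyset.
\]
The idea is to absorb the shift $\hat a$ into the base points: setting $\hat x^\circ:=\hat x'+\hat a=(x_1'+a_1,\ldots,x_n'+a_n)$, the displacement condition above reads exactly ${\bigcap}_{i=1}^n(\Omega_i-x_i^\circ)\cap(\rho\B)=\emptyset$, which (up to passing from the open to the closed ball $\rho\overline\B$ by shrinking $\rho$ slightly, or choosing $\rho$ from the $\al$-stationarity with strict separation built in) supplies the disjointness hypothesis of Theorem~\ref{C3.9}. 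Taking $\widehat\omega:=\hat x'\in\widehat\Omega$ as the reference point, I have $\vertiii{\widehat\omega-\hat x^\circ}=\vertiii{\hat a}\le\kappa_2\al\rho$ by \eqref{C}, which is small; this plays the role of the gauge $\varepsilon$ in Theorem~\ref{C3.9}.

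Next I would invoke part \eqref{C3.3-2} of Theorem~\ref{C3.9} (part \eqref{C3.3-1} for the Banach/Clarke case in \eqref{T4.2-2}) with $\de$ chosen to make the neighbourhood term controllable, the input gauge set to roughly $\kappa_2\al\rho$, and the same $\tau$. This yields points $\hat x\in\widehat\Omega\cap B_\de(\widehat\omega)$, a scalar $x_0\in\rho\B$, and $\hat x^*$ with $\vertiii{\hat x^*}=1$ satisfying the two displayed relations there. From the first of those,
\[
{\de}\,d\!\left(\hat x^*,N^F_{\widehat\Omega}(\hat x)\right)+\rho\Big\|\sum_{i=1}^nx_i^*\Big\|<\kappa_2\al\rho,
\]
I read off two consequences after dividing by $\rho$: that $\big\|\sum_i x_i^*\big\|<\kappa_2\al$, and that $d(\hat x^*,N^F_{\widehat\Omega}(\hat x))$ is as small as I wish. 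The $\al$-to-$\be$ gap is the reason for the slack $\be>\al$: here is where I calibrate $\eps'$ (hence $\rho$ and the input gauge) so that $\kappa_2\al$ together with the perturbation incurred by replacing $\hat x^*$ by its nearest point in $N^F_{\widehat\Omega}(\hat x)$ stays below $\be$; because the product normal cone factors, $N^F_{\widehat\Omega}(\hat x)=\prod_i N^F_{\Omega_i}(x_i)$, I can replace $\hat x^*$ by an exact normal $\tilde x^*\in N^F_{\widehat\Omega}(\hat x)$ that is $\eta$-close in $\vertiii{\cdot}$, renormalize so $\vertiii{\tilde x^*}=1$, and preserve both the strict inequality $\big\|\sum_i\tilde x_i^*\big\|<\be$ and the alignment inequality \eqref{T4.3-2} up to replacing $\tau$ by a slightly smaller value (still in $(0,1)$), which is harmless after a preliminary relabelling of $\tau$.

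The alignment condition \eqref{T4.3-2} comes from the second displayed relation of Theorem~\ref{C3.9}, namely $\ang{\hat x^*,(x_0,\ldots,x_0)_n+\hat x^\circ-\hat x}>\tau\vertiii{(x_0,\ldots,x_0)_n+\hat x^\circ-\hat x}$. Substituting $\hat x^\circ=\hat x'+\hat a$ converts this literally into \eqref{T4.3-2}, since $(x_0,\ldots,x_0)_n+\hat x^\circ-\hat x=(x_0,\ldots,x_0)_n+\hat a+\hat x'-\hat x$. Finally, the localization $\hat x,\hat x'\in\widehat\Omega\cap B_\eps((x^k,\ldots,x^k)_n)$ and $x_0\in\eps\B_X$ and $\hat a\in\eps\B_{X^n}$ follow by ensuring $\eps'$ (and thus $\de$, $\rho$, $\al\rho$) is chosen below $\eps$ from the outset, using the triangle inequality $\vertiii{\hat x-(x^k,\ldots,x^k)_n}\le\vertiii{\hat x-\widehat\omega}+\vertiii{\widehat\omega-(x^k,\ldots,x^k)_n}<\de+\eps'$. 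I expect the main obstacle to be the bookkeeping of constants: propagating $\kappa_1,\kappa_2$ from \eqref{C} through the normalization $\vertiii{\hat x^*}=1$ while keeping $\big\|\sum_i x_i^*\big\|<\be$ strict and the alignment \eqref{T4.3-2} intact after the projection of $\hat x^*$ onto the normal cone. The Asplund/Clarke dichotomy of part \eqref{T4.2-2} versus \eqref{T4.2-1} is inherited directly from the two parts of Theorem~\ref{C3.9} and requires no separate argument.
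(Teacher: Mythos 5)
Your overall route coincides with the paper's proof step for step: extract $k$, $\rho$, $\hat x'$, $\hat a$ from Definition~\ref{D3.5}; absorb the perturbation by setting $\hat x^\circ:=\hat x'+\hat a$ and $\widehat\omega:=\hat x'$; pass to a slightly smaller radius to trade the open ball for the closed one required by Theorem~\ref{C3.9} (the paper takes $\rho'\in(\vertiii{\hat a}/\al,\rho)$); apply Theorem~\ref{C3.9} with a gauge proportional to the radius and $\de$ of order $\sqrt{\rho'}$; project the resulting unit functional onto the product normal cone, renormalize, and absorb the resulting $2\xi$-loss in the alignment inequality \eqref{T4.3-2} by starting from a slightly enlarged $\tau$ (the paper's $\hat\tau\in(\tau+2\xi,1)$ in the Asplund case). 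So the architecture is right; the problem is in the constant bookkeeping, which you yourself flagged as the main risk.

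The concrete gap: you bound $\vertiii{\widehat\omega-\hat x^\circ}=\vertiii{\hat a}\le\kappa_2\al\rho$ ``by \eqref{C}'', i.e., you read $\hat a\in\al\rho\B_{X^n}$ as a componentwise bound needing conversion. But $\B_{X^n}$ in Definition~\ref{D3.5} is the unit ball of $(X^n,\vertiii{\,\cdot\,})$ itself, so $\vertiii{\hat a}<\al\rho$ holds directly and no $\kappa_2$ enters at this point. With your inflated gauge $\kappa_2\al\rho$, dividing the first conclusion of Theorem~\ref{C3.9} by the radius yields only $\big\|\sum_{i=1}^n x_i^*\big\|<\kappa_2\al$, and since $\kappa_2\al$ does not depend on your auxiliary parameter $\eps'$, no calibration of $\eps'$ can push the estimate below $\be$ whenever $\al<\be\le\kappa_2\al$ --- which occurs for any product norm with $\kappa_2>1$ (permitted by \eqref{C}; only the maximum norm \eqref{norm} gives $\kappa_2=1$). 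The paper avoids this by taking $\eps':=\al\rho'$ with $\rho'\in(\vertiii{\hat a}/\al,\rho)$ and $\de:=\al\sqrt{\rho'}$, obtaining $\big\|\sum_{i=1}^n x_i'^*\big\|<\eps'/\rho'=\al$ exactly; the constant $\kappa_2$ then appears only in the harmless term $\kappa_2\xi$, via the dual compatibility \eqref{C6} applied to the $\xi$-small perturbation $\hat z^*-\hat x'^*$, giving $(\al+\kappa_2\xi)/(1-\xi)<\be$ for $\xi$ small. Once you make this correction, your argument goes through. Two further minor slips: you pair part \eqref{C3.3-1} of Theorem~\ref{C3.9} with assertion \eqref{T4.2-2}, but \eqref{T4.2-2} is the Asplund/Fr\'echet case, which uses \eqref{C3.3-2}, while the Banach/Clarke assertion \eqref{T4.2-1} uses \eqref{C3.3-1}; and note that in the Clarke case the alignment relation from \eqref{C3.3-1} is an exact equality, so the only degradation there is the $2\xi$ from replacing $\hat x^*$ --- which your relabelling-of-$\tau$ device also covers.
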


\begin{proof}
Let $\eps>0$, $\be>\al$, {and} $\tau\in(0,1)$.
It is easy to check that the second inequality in \eqref{C} implies the next compatibility condition
for the dual norms
(with the same $\kappa_2>0$):
\begin{gather}
\label{C6}
\sum_{i=1}^n\|u_i^*\|\le \kappa_2\vertiii{(u_1^*,\ldots,u_n^*)}\quad
\text{for all}\;\; u_1^*,\ldots,u_n^*\in X^*.
\end{gather}
Choose a number $\xi>0$ so that
\begin{gather}
\label{T4.3P3}
2\xi<1-\tau,\quad
\xi^2<\eps,\quad\al\xi<\eps\AND \frac{\al+\kappa_2\xi}{1-\xi}<\be.
\end{gather}
By Definition~\ref{D3.5}, there exist
a $\rho\in(0,\xi^2)$, an integer $k>\xi^{-2}$, and points $\hat a:=(a_1,\ldots,a_n)\in\al\rho\B_{X^n}$ and
$\hat x':=(x_1',\ldots,x_n')\in\widehat\Omega\cap B_{\xi^2}((x^k,\ldots,x^k)_n)$ such that
$\bigcap_{i=1}^n(\Omega_i-x_{i}'-a_i)\cap (\rho\B)=\emptyset$.

Choose a $\rho'\in(\vertiii{\hat a}/\al,\rho)$,
and set $\eps':=\al\rho'$ and $\de:=\al\sqrt{\rho'}$.
Then {$\vertiii{\hat a}<\eps'$ and $\bigcap_{i=1}^n(\Omega_i-x_{i}'-a_i)\cap (\rho'\overline\B)=\emptyset$}.
By Theorem~\ref{C3.9}\,\eqref{C3.3-1},
there {exist}
$\hat x:=(x_1,\ldots,x_n)\in\widehat\Omega\cap B_\de(\hat x')$, {$x_0\in\rho'\B$} and $\hat x'^*:=(x_1'^*,\ldots,x_n'^*)\in(X^*)^n$ such that $\vertiii{\hat x'^*}=1$ and
\begin{gather}
\label{T4.3-4}
\de d\left(\hat x'^*,N^C_{\widehat\Omega}(\hat x)\right)+ \rho'\Big\|\sum_{i=1}^nx_i'^*\Big\|<\eps',\\
\label{T4.3-9}
\ang{\hat x'^*,(x_0,\ldots,x_0)_n+\hat a+\hat x'-\hat x}=
\vertiii{(x_0,\ldots,x_0)_n+\hat a+\hat x'-\hat x}.
\end{gather}
Thus, $k>\xi^{-2}>\varepsilon\iv$,
$\vertiii{\hat a}<\al\xi^2<\al\xi<\varepsilon$,
$\vertiii{(x_1-x^k,\ldots,x_n-x^k)}<\de< \al\xi<\varepsilon$ and
$\|x_0\|<\rho'<\xi^2<\varepsilon$.
By \eqref{T4.3-4},
\begin{gather*}
d\left(\hat x'^*,N^C_{\widehat\Omega}(\hat x)\right) <\frac{\eps'}{\de}= \frac{\al\rho'}{\al\sqrt{\rho'}}<\xi,\quad \Big\|\sum_{i=1}^nx_i'^*\Big\|<\frac{\eps'}{\rho'}=\al.
\end{gather*}
Thus, there is a $\hat z^*:=(z_1^*,\ldots,z_n^*)\in N^C_{\widehat\Omega}(\hat x)$ such that $\vertiii{\hat x'^*-\hat z^*}<\xi$, and consequently, ${0<1-\xi}<\vertiii{\hat z^*}<1+\xi$.
By \eqref{C6},
\begin{gather*}
\Big\|\sum_{i=1}^nz_i^*\Big\|\le \Big\|\sum_{i=1}^n x_i'^*\Big\|+
\sum_{i=1}^{n}\|z^*_i-x_i'^*\|<\al+\kappa_2\xi.
\end{gather*}
Set $\hat x^*:=(x_1^*,\ldots,x_n^*):=\dfrac{\hat z^*}{\vertiii{\hat z^*}}$.
Then, $\hat x^*\in N^C_{\widehat\Omega}(\hat x)$, $\vertiii{\hat x^*}=1$ and, in view of the last inequality in \eqref{T4.3P3},
\begin{gather*}
\Big\|\sum_{i=1}^nx_i^*\Big\| <\frac{\al+\kappa_2\xi}{1-\xi}<\be.
\end{gather*}
Hence, conditions \eqref{T4.3-5} are satisfied.
Moreover,
\begin{align*}
\vertiii{\hat x^*-\hat x'^*}
&\le \vertiii{\frac{\hat z^*}{{\vertiii{\hat z^*}}}-\hat z^*}+ \vertiii{\hat z^*-\hat x'^*}<\abs{{\vertiii{\hat z^*}}-1}+\xi<2\xi.
\end{align*}
Denote $m:=\vertiii{(x_0,\ldots,x_0)_n+\hat a+\hat x'-\hat x}$.
Condition \eqref{T4.3-2} follows from \eqref{T4.3P3}, \eqref{T4.3-9},  and the last estimate:
\begin{multline}
\label{T4.3-6}
\ang{\hat x^*,(x_0,\ldots,x_0)_n+\hat a+\hat x'-\hat x}>\ang{\hat x'^*,(x_0,\ldots,x_0)_n+\hat a+\hat x'-\hat x}- 2\xi m\\
=(1-2\xi)m>\tau m.
\end{multline}

Suppose $X$ is Asplund.
Let $\hat\tau\in(\tau+2\xi,1)$.
Application of Theorem~\ref{C3.9}\;\eqref{C3.3-2} with $\hat\tau$ in place of $\tau$ in the above proof justifies conditions \eqref{T4.3-5} with $\hat x^*\in N^F_{\widehat\Omega}(\hat x)$, while the factor $1-2\xi$ in \eqref{T4.3-6} needs to be replaced by $\hat\tau-2\xi$ leading to the same  estimate.
This again proves \eqref{T4.3-2}.
\end{proof}

\begin{corollary}
[Sequential approximate stationarity: generalized separation]
\label{C3.8}
Let $X$ be Banach, and $\Omega_1,\ldots,\Omega_n$
be closed.
Suppose that $\{\Omega_1,\ldots,\Omega_n\}$ is approximately sta\-tionary at $\{x^k\}\subset X$.
The following assertions hold true:
\begin{enumerate}
\item
\label{C3.8.1}
for any $\varepsilon>0$ and $\tau\in(0,1)$, there exist {a $k>\eps\iv$, and points $x_0\in\eps\B_X$,
$\hat x:=(x_1,\ldots,x_n)$, $\hat x':=(x'_1,\ldots,x'_n)$ $\in\widehat\Omega\cap B_\eps((x^k,\ldots,x^k)_n)$, $\hat a:=(a_1,\ldots,a_n)\in\eps\B_{X^n}$, $\hat x^*:=(x_1^*,\ldots,x_n^*)\in N^C_{\widehat\Omega}(\hat x)$} such that
conditions \eqref{T1.4-2} and \eqref{T4.3-2} are satisfied;
\item
\label{C3.8.2}
for any $\varepsilon>0$, there exist {a $k>\eps\iv$, and points
$\hat x:=(x_1,\ldots,x_n)\in\widehat\Omega\cap B_\eps((x^k,\ldots,x^k)_n)$, $\hat x^*:=(x_1^*,\ldots,x_n^*)\in N^C_{\widehat\Omega}(\hat x)$} such that
conditions \eqref{T1.4-2} are satisfied;
\item
\label{C3.8.3}
if $X$ is Asplund,
then $N^C$ in \eqref{C3.8.1} and \eqref{C3.8.2} can be replaced by $N^F$.
\end{enumerate}
\end{corollary}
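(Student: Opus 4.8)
The plan is to derive all three assertions directly from Theorem~\ref{T4.3}, exploiting the observation recorded immediately after Definition~\ref{D3.5} that approximate stationarity of $\{\Omega_1,\ldots,\Omega_n\}$ at $\{x^k\}$ is equivalent to approximate $\al$-stationarity at $\{x^k\}$ for \emph{every} $\al>0$. Thus the hypothesis of the corollary supplies, for free, the hypothesis of Theorem~\ref{T4.3} with any value of $\al$ we wish to use, and the entire argument reduces to a suitable choice of the parameters $\al$ and $\be$.

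First, for assertion~\eqref{C3.8.1}, I would fix $\eps>0$ and $\tau\in(0,1)$, set $\al:=\eps/2$, and pick any $\be\in(\al,\eps)$, say $\be:=3\eps/4$, so that $\al<\be<\eps$. Since the collection is approximately stationary at $\{x^k\}$, it is in particular approximately $\al$-stationary at $\{x^k\}$, so I would apply Theorem~\ref{T4.3}\,\eqref{T4.2-1} with these $\eps$, $\al$, $\be$ and $\tau$. This produces an integer $k>\eps\iv$ together with the points $\hat a\in\eps\B_{X^n}$, $\hat x,\hat x'\in\widehat\Omega\cap B_\eps((x^k,\ldots,x^k)_n)$, $x_0\in\eps\B_X$ and $\hat x^*\in N^C_{\widehat\Omega}(\hat x)$ satisfying \eqref{T4.3-5} and \eqref{T4.3-2}. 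Because $\be<\eps$, the bound $\|\sum_{i=1}^nx_i^*\|<\be$ sharpens to $\|\sum_{i=1}^nx_i^*\|<\eps$; combined with $\vertiii{\hat x^*}=1$ this is precisely \eqref{T1.4-2}, while \eqref{T4.3-2} carries over verbatim. This settles \eqref{C3.8.1}.

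Assertion~\eqref{C3.8.2} then follows by fixing an arbitrary $\tau\in(0,1)$, invoking \eqref{C3.8.1}, and simply discarding the auxiliary points $x_0$, $\hat x'$, $\hat a$ together with condition \eqref{T4.3-2}, retaining only $k$, $\hat x$ and $\hat x^*$ with \eqref{T1.4-2}. For assertion~\eqref{C3.8.3}, under the additional Asplund hypothesis I would repeat the same reasoning but invoke the \Fr\ part Theorem~\ref{T4.3}\,\eqref{T4.2-2} in place of the Clarke part \eqref{T4.2-1}, which replaces $N^C_{\widehat\Omega}(\hat x)$ by $N^F_{\widehat\Omega}(\hat x)$ throughout and leaves the rest of the argument unchanged.

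I do not anticipate a genuine obstacle: the corollary is essentially a specialisation of Theorem~\ref{T4.3}. The only points demanding care are the bookkeeping of the parameters — ensuring $\al<\be<\eps$ so that the $\be$-bound on $\|\sum_{i=1}^nx_i^*\|$ collapses to the sharper $\eps$-bound required in \eqref{T1.4-2} — and recalling that, under the compatibility condition \eqref{C}, membership $\hat x^*\in N^C_{\widehat\Omega}(\hat x)$ already encodes the componentwise conditions $x_i^*\in N^C_{\Omega_i}(x_i)$.
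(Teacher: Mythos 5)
Your proposal is correct and matches the paper's intended derivation: the paper states Corollary~\ref{C3.8} without a separate proof precisely because it is the specialisation of Theorem~\ref{T4.3} via the observation after Definition~\ref{D3.5} that approximate stationarity is approximate $\al$-stationarity for every $\al>0$, with $\al<\be\le\eps$ chosen so that the $\be$-bound in \eqref{T4.3-5} yields \eqref{T1.4-2}, and with assertion \eqref{C3.8.2} obtained by dropping \eqref{T4.3-2} and its associated variables (exactly as noted in Remark~\ref{R3}\,\eqref{R3.2}). Your parameter bookkeeping ($\al:=\eps/2$, $\be:=3\eps/4$) and the use of Theorem~\ref{T4.3}\,\eqref{T4.2-2} for the Asplund case are both sound.
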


The necessary conditions in Corollary~\ref{C3.8} are applicable (with obvious amendments) to the stationarity and extremality properties in Definition~\ref{D3.1}.
In particular, we can formulate a result generalizing and extending the conventional extremal principle in Theorem~\ref{EP}.

\begin{corollary}
[Sequential extremal principle]
\label{C3.10}
Let $X$ be Banach, and $\Omega_1,\ldots,\Omega_n$
be closed.
Suppose that $\{\Omega_1,\ldots,\Omega_n\}$ is extremal at $\{x_i^k\}\subset\Omega_i$ $(i=1,\ldots,n)$.
The following assertions hold true:
\begin{enumerate}
\item
\label{C3.10.1}
for any $\varepsilon>0$ and $\tau\in(0,1)$, there exist {a $k>\eps\iv$, and points $x_0\in\eps\B_X$,
$\hat x:=(x_1,\ldots,x_n)$, $\hat x':=(x'_1,\ldots,x'_n)\in\widehat\Omega\cap B_\eps(x_1^k,\ldots,x_n^k)$, $\hat a:=(a_1,\ldots,a_n)\in\eps\B_{X^n}$, $\hat x^*:=(x_1^*,\ldots,x_n^*)\in N^C_{\widehat\Omega}(\hat x)$} such that
conditions \eqref{T1.4-2} and \eqref{T4.3-2} are satisfied;
\item
\label{C3.10.2}
for any $\varepsilon>0$, there exist {a $k>\eps\iv$, and points
$\hat x:=(x_1,\ldots,x_n)\in\widehat\Omega\cap B_\eps(x_1^k,\ldots,x_n^k)$, $\hat x^*:=(x_1^*,\ldots,x_n^*)\in N^C_{\widehat\Omega}(\hat x)$} such that
conditions \eqref{T1.4-2} are satisfied;
\item
\label{C3.10.3}
if $X$ is Asplund,
then $N^C$ in \eqref{C3.10.1} and \eqref{C3.10.2} can be replaced by $N^F$.
\end{enumerate}
\end{corollary}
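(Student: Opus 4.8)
The plan is to reduce the extremal case entirely to the approximately stationary case already settled in Corollary~\ref{C3.8}. The bridge is the chain of implications recorded in Remark~\ref{R02}\,\eqref{R02.3}: extremality of $\{\Omega_1,\ldots,\Omega_n\}$ at the sequences $\{x_i^k\}\subset\Omega_i$ $(i=1,\ldots,n)$ forces stationarity at the same sequences (the implication \eqref{D3.1.1} $\Rightarrow$ \eqref{D3.1.2}), and stationarity in turn yields approximate stationarity at each individual sequence $\{x_i^k\}$. Picking the index $i=1$, the collection is approximately stationary at $\{x^k\}:=\{x_1^k\}$, so Corollary~\ref{C3.8} becomes applicable with this single sequence.

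The only genuine discrepancy between the conclusion I want and the one Corollary~\ref{C3.8} delivers is the location of the reference point: Corollary~\ref{C3.8} produces points in $\widehat\Omega\cap B_\eps((x_1^k,\ldots,x_1^k)_n)$, i.e. balls centred at the diagonal point, whereas Corollary~\ref{C3.10} asks for balls centred at the tuple $(x_1^k,\ldots,x_n^k)$. I would control this shift through the second inequality in \eqref{C}: with constant $\kappa_2>0$ one has $\vertiii{(x_1^k,\ldots,x_1^k)_n-(x_1^k,\ldots,x_n^k)}\le\kappa_2\max_{1\le i\le n}\|x_i^k-x_1^k\|\le\kappa_2\diam\{x_1^k,\ldots,x_n^k\}$, and this bound tends to $0$ by Definition~\ref{D3.1}\,\eqref{D3.1.1}.

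Concretely, given $\eps>0$ (and $\tau\in(0,1)$ for part \eqref{C3.10.1}), I would first fix an integer $K$ with $\kappa_2\diam\{x_1^k,\ldots,x_n^k\}<\eps/2$ for all $k>K$, then apply Corollary~\ref{C3.8}\,\eqref{C3.8.1} to $\{x_1^k\}$ with the tolerance $\eps':=\min\{\eps/2,1/(K+1)\}$ in place of $\eps$. The returned integer satisfies $k>\eps'{}\iv\ge K+1>K$, so the diameter estimate is in force; the triangle inequality then upgrades the membership $\hat x,\hat x'\in\widehat\Omega\cap B_{\eps'}((x_1^k,\ldots,x_1^k)_n)$ to $\hat x,\hat x'\in\widehat\Omega\cap B_{\eps}(x_1^k,\ldots,x_n^k)$, since $\eps'+\eps/2\le\eps$. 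The remaining data are inherited verbatim because $\eps'\le\eps$: one has $x_0\in\eps'\B_X\subseteq\eps\B_X$, $\hat a\in\eps'\B_{X^n}\subseteq\eps\B_{X^n}$, $k>\eps'{}\iv>\eps\iv$, $\hat x^*\in N^C_{\widehat\Omega}(\hat x)$, and the dual estimates \eqref{T1.4-2} and \eqref{T4.3-2} carry over unchanged. Part \eqref{C3.10.2} is obtained identically, dropping $\tau$ and invoking Corollary~\ref{C3.8}\,\eqref{C3.8.2}, and part \eqref{C3.10.3} is immediate from Corollary~\ref{C3.8}\,\eqref{C3.8.3}.

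The main (indeed essentially the only) point requiring care is the interaction between the \emph{existential} integer $k$ supplied by Corollary~\ref{C3.8} and the merely \emph{asymptotic} smallness of the diameter: since I cannot prescribe which $k$ the corollary returns, I must commit to the threshold $K$ in advance and then force $k>K$ by shrinking the applied tolerance to $\eps'$. Once this bookkeeping is arranged, the rest of the argument is a direct transfer of the conclusions of Corollary~\ref{C3.8}.
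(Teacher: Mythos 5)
Your proposal is correct and takes essentially the same route as the paper: the paper's proof of Corollary~\ref{C3.10} is a one-line reduction to Corollary~\ref{C3.8} via Remark~\ref{R02}\,\eqref{R02.3} together with the condition $\diam\{x_1^k,\ldots,x_n^k\}\to0$ from Definition~\ref{D3.1}\,\eqref{D3.1.1}, exactly the bridge you use. Your explicit bookkeeping (fixing the threshold $K$ in advance and shrinking the applied tolerance to $\eps'$ so that the existential $k$ returned by Corollary~\ref{C3.8} is forced past $K$, then recentring the balls via \eqref{C} and the triangle inequality) is precisely the step the paper leaves implicit, and it is carried out correctly.
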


\begin{proof}
The statement is a consequence of Corollary~\ref{C3.8} in view of Remark~\ref{R02}\,\eqref{R02.3} and the fact that
$\diam\{x_1^k,\ldots,x_n^k\}\to0$ as $k\to+\infty$
(see Definition~\ref{D3.1}\,\eqref{D3.1.1}).
\end{proof}

\begin{remark}
\label{R3}
\begin{enumerate}
\item
\label{R3.2}
The second assertions in Corollaries~\ref{C3.8} and \ref{C3.10} are simplified versions of the first ones.
They correspond to
dropping inequality \eqref{T4.3-2} together with the variables involved only in this condition.
A similar simplification can be made in assertion \eqref{T4.2-1} of Theorem~\ref{T4.3}.

\item
{If $x_i^k:=\bx$ for all $i=1,\ldots,n$ and $k\in\N$, and the maximum norm \eqref{norm} is used, then Corollary~\ref{C3.10}\,\eqref{C3.10.3} covers Theorem~\ref{EP}.}

\item
\label{R3.3}
Imposing certain sequential normal compactness assumptions (which are automatically satisfied in finite dimensions), one can formulate limiting versions of Theorem~\ref{T4.3} and Corollary~\ref{C3.8} in terms of certain types of limiting normal cones.
This remark applies also to the statements in the rest of the paper.
\end{enumerate}
\end{remark}


{The generalized separation conditions in Theorem~\ref{T4.3}, when formulated in terms of \Fr\ normal cones, are in fact sufficient for sequential approximate $\al$-stationarity (with a smaller $\al>0$)}; cf., e.g., \cite
{CuoKru25}.

\begin{theorem}
[Sequential generalized separation in Asplund spaces]
\label{T4.4}
Let $\{x^k\}\subset X$, $\al>0$ and $\be>0$.
Consider the following assertions:
\begin{enumerate}
\item\label{T4.4-1}
$\{\Omega_1,\ldots,\Omega_n\}$
is approximately $\al$-stationary at $\{x^k\}$;
\item
\label{T4.4-2}
for any $\varepsilon>0$ and $\tau\in(0,1)$, there exist {a $k>\eps\iv$, and points $x_0\in\eps\B_X$,
$\hat x:=(x_1,\ldots,x_n)$, $\hat x':=(x'_1,\ldots,x'_n)\in\widehat\Omega\cap B_\eps((x^k,\ldots,x^k)_n)$, $\hat a:=(a_1,\ldots,a_n)\in\eps\B_{X^n}$, $\hat x^*:=(x_1^*,\ldots,x_n^*)\in N^C_{\widehat\Omega}(\hat x)$} such that
conditions \eqref{T4.3-5} and \eqref{T4.3-2} are satisfied;
\item
\label{T4.4-3}
for any $\varepsilon>0$, there exist {a $k>\eps\iv$, and points
$\hat x:=(x_1,\ldots,x_n)\in\widehat\Omega\cap B_\eps((x^k,\ldots,x^k)_n)$,  $\hat x^*:=(x_1^*,\ldots,x_n^*)\in N^F_{\widehat\Omega}(\hat x)$} such that
conditions \eqref{T4.3-5} are satisfied.
\end{enumerate}
The following relations hold true:
{\renewcommand{\theenumi}{\rm\alph{enumi}}
\begin{enumerate}
\item\label{a1}
{\rm \eqref{T4.4-2} \folgt \eqref{T4.4-3}};
\item\label{b1}
if $X$ is Asplund, $\Omega_1,\ldots,\Omega_n$ are closed, and $\be>\al$, then
{\rm \eqref{T4.4-1}~\folgt\ \eqref{T4.4-2}};
\item\label{c1}
if $\al>\be$, then
{\rm \eqref{T4.4-3}~$\Rightarrow$~\eqref{T4.4-1}}.
\end{enumerate}
}
\end{theorem}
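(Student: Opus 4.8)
The plan is to prove the three implications \eqref{a1}, \eqref{b1}, \eqref{c1} separately; together they close the cycle $\eqref{T4.4-1}\Rightarrow\eqref{T4.4-2}\Rightarrow\eqref{T4.4-3}\Rightarrow\eqref{T4.4-1}$, so that in an Asplund space, with the constants ordered by $\al<\be$ in \eqref{b1} and $\al>\be$ in \eqref{c1}, the three assertions are essentially equivalent modulo the gap between $\al$ and $\be$. I would treat \eqref{b1} as the ``necessity'' part supplied directly by the machinery already at hand, \eqref{c1} as the ``sufficiency'' part proved by a primal construction, and \eqref{a1} as the bridge upgrading Clarke data to \Fr\ data.

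For \eqref{b1} I would simply invoke Theorem~\ref{T4.3}\,\eqref{T4.2-1}: its hypotheses ($X$ Banach, the $\Omega_i$ closed, $\be>\al$) are covered by those of \eqref{b1}, and its conclusion produces exactly the integer $k$, the points $\hat a\in\eps\B_{X^n}$, $\hat x,\hat x'\in\widehat\Omega\cap B_\eps((x^k,\ldots,x^k)_n)$, $x_0\in\eps\B_X$ and $\hat x^*\in N^C_{\widehat\Omega}(\hat x)$ satisfying \eqref{T4.3-5} and \eqref{T4.3-2}. Thus \eqref{b1} is essentially a restatement of that theorem, and the Asplund hypothesis is not even needed here.

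For \eqref{c1} I would argue at the primal level. Fix $\eps>0$ and apply \eqref{T4.4-3} with a small parameter to obtain $k$, $\hat x=(x_1,\ldots,x_n)\in\widehat\Omega\cap B_\eps((x^k,\ldots,x^k)_n)$ and $\hat x^*\in N^F_{\widehat\Omega}(\hat x)$ with $\vertiii{\hat x^*}=1$ and $\|\sum_i x_i^*\|<\be$; since the \Fr\ normal cone to $\widehat\Omega$ factorizes, $x_i^*\in N^F_{\Omega_i}(x_i)$. Pick $\hat\phi=(\phi_1,\ldots,\phi_n)$ with $\vertiii{\hat\phi}=1$ and $\ang{\hat x^*,\hat\phi}$ close to $1$, set $\hat a:=s\hat\phi$ with $s$ slightly below $\al\rho$, and choose $\rho>0$ so small that the \Fr\ inequality $\ang{x_i^*,u-x_i}\le\ga\|u-x_i\|$ holds with a tiny $\ga$ for all $u\in\Omega_i$ near $x_i$. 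If $\bigcap_i(\Omega_i-x_i-a_i)\cap(\rho\B)$ were nonempty, a common $z\in\rho\B$ would give $x_i+z+a_i\in\Omega_i$; summing the \Fr\ inequalities and using $\ang{\hat x^*,\hat a}=\sum_i\ang{x_i^*,a_i}\approx s$ and $\|\sum_i x_i^*\|<\be$ yields $s\le\be\rho+O(\ga\rho)$, which for $\ga$ small contradicts $s\approx\al\rho$ precisely because $\al>\be$ (the compatibility constants of \eqref{C} enter only the vanishing error terms). Hence \eqref{D2.5.3-1} holds and \eqref{T4.4-1} follows.

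The delicate implication is \eqref{a1}, where one must pass from a Clarke normal satisfying the near-exact support condition \eqref{T4.3-2} to a genuine \Fr\ normal satisfying only \eqref{T4.3-5}, and this with no Asplund assumption. My plan is to read \eqref{T4.3-2} (taking $\tau\uparrow1$) as an explicit separating functional already in hand, so that no Asplund-dependent fuzzy sum rule is required: the vector $v:=(x_0,\ldots,x_0)_n+\hat a+\hat x'-\hat x$ is almost normed by $\hat x^*$, and I would then locate a point of $\widehat\Omega$ near $\hat x$ at which $\hat x^*$ supports $\widehat\Omega$ exactly (for instance by maximizing $\ang{\hat x^*,\cdot}$ over a small piece of $\widehat\Omega$, or by a limiting argument along $\tau\uparrow1$), whence $\hat x^*$ becomes a \Fr\ normal there and \eqref{T4.3-2} may be dropped. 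I expect this Clarke-to-\Fr\ upgrade to be the main obstacle: the general inclusion $N^F\subset N^C$ runs the wrong way, so the argument cannot rely on it and must instead exploit the specific product-plus-diagonal-ball geometry encoded in \eqref{T4.3-2}, while ensuring that the supporting point stays inside the prescribed neighbourhood and that the support is attained rather than merely approached.
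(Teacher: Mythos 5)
Your parts \eqref{b1} and \eqref{c1} are sound, and \eqref{c1} is in substance the paper's own proof: from \eqref{T4.4-3} one takes $\hat x^*\in N^F_{\widehat\Omega}(\hat x)$ with $\vertiii{\hat x^*}=1$ and $\|\sum_{i=1}^n x_i^*\|<\be$, chooses $\rho\in(0,\eps)$ so that the \Fr\ inequality $\ang{\hat x^*,\hat\omega-\hat x}\le\xi\rho$ holds for all $\hat\omega\in\widehat\Omega$ with $\vertiii{\hat\omega-\hat x}<(\kappa_2+\al)\rho$, selects $\hat a$ with $\vertiii{\hat a}<\al\rho$ and $\ang{\hat x^*,\hat a}>(\be+\xi)\rho$ (your $s\hat\phi$ construction), and derives the same two incompatible estimates $\ang{\sum_{i=1}^n x_i^*,x_0}<-\be\rho$ and $>-\be\rho$; the constant $\kappa_2$ from \eqref{C} enters exactly where you say it does. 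For \eqref{b1}, your appeal to Theorem~\ref{T4.3}\,\eqref{T4.2-1} does deliver the $N^C$-statement as literally printed; the paper instead cites the Asplund part \eqref{T4.2-2} --- a first sign that \eqref{T4.4-2} is meant to be read with $N^F$.

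The genuine gap is your treatment of \eqref{a1}. The paper's proof of \eqref{a1} is one line: \eqref{T4.4-3} is the simplified version of \eqref{T4.4-2} obtained by dropping \eqref{T4.3-2} together with the variables $x_0$, $\hat x'$, $\hat a$, $\tau$ involved only in it (Remark~\ref{R3}\,\eqref{R3.2}). This only makes sense when the normal cones in the two assertions coincide, and indeed the $N^C$ in \eqref{T4.4-2} is evidently a misprint for $N^F$: the analogous assertion \eqref{C4.5.2} of Corollary~\ref{C4.5}, which is \eqref{T4.4-2} with the $\al$'s quantified away, carries $N^F$, and the proof of \eqref{b1} via \eqref{T4.2-2} confirms this reading. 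Taking the misprint at face value, you set out to upgrade a Clarke normal satisfying \eqref{T4.3-2} to a \Fr\ normal in an arbitrary normed space, by locating a nearby point of $\widehat\Omega$ at which $\hat x^*$ supports $\widehat\Omega$ exactly. That step would fail: over a nonconvex closed set in an infinite-dimensional space a linear functional need not attain a local maximum on any ``small piece'' (no compactness, and no Bishop--Phelps theorem for nonconvex sets), the \EVP\ produces only approximate maximizers of perturbed functions, and extracting a \Fr\ normal from such data is precisely where a smooth variational principle --- i.e.\ Asplundness --- is required. Worse, were your upgrade possible, \eqref{T4.4-3} with \Fr\ normals would hold in every Banach space, contradicting the fact recalled in the paper after Theorem~\ref{EP} that the \Fr-normal separation conclusions characterize Asplund spaces. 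So the ``delicate Clarke-to-\Fr'' implication you correctly flagged as the main obstacle is not provable; it is an artifact of the typo, and once \eqref{T4.4-2} is read with $N^F$, assertion \eqref{a1} is the trivial weakening the paper declares it to be.
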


\begin{proof}
The implication in
\eqref{a1} is straightforward as \eqref{T4.4-3} is a simplified version of \eqref{T4.4-2}; see Remark~\ref{R3}\,\eqref{R3.2}.
The implication in \eqref{b1} is a direct consequence of Theorem~\ref{T4.3}\,\eqref{T4.2-2}.
We now prove the implication in \eqref{c1}.

Suppose that the second inequality in \eqref{C} is satisfied with some $\kappa_2>0$,
assertion \eqref{T4.4-3} holds true, and $\al>\be$.
Let $\eps>0$.
Then, there exist
an integer $k>\eps\iv$, and points
$\hat x:=(x_1,\ldots,x_n)\in\widehat\Omega\cap B_\eps((x^k,\ldots,x^k)_n)$ and $\hat x^*:=(x_1^*,\ldots,x_n^*)\in N^F_{\widehat\Omega}(\hat x)$ such that
conditions \eqref{T4.3-5} are satisfied.

Choose a $\xi\in(0,\al-\be)$.
Thus, $\xi':=\al-\be-\xi>0$.
By the definition of \Fr\ normal cone,
there is a $\rho\in(0,\varepsilon)$ such that
\begin{gather}
\label{T4.4P1}
\langle \hat x^*,\hat\omega-\hat x\rangle\le \dfrac{\xi}{\kappa_2+\al} \vertiii{\hat\omega-\hat x}<\xi\rho
\;\;
\text{for all}
\;\;
\hat\omega\in\widehat\Omega\;\;\text{with}\;\;
\vertiii{\hat\omega-\hat x}<(\kappa_2+\al)\rho.
\end{gather}
By the equality in \eqref{T4.3-5},
one can choose an $\hat a:=(a_1,\ldots,a_n)\in X^n$ such that
\begin{gather}
\label{T4.4P2}
\vertiii{\hat a}<\al\rho
\;\;
\text{and}
\;\;
\langle \hat x^*,\hat a\rangle
>(\al-\xi')\rho=(\be+\xi)\rho.
\end{gather}
We now show that $\bigcap_{i=1}^n(\Omega_i-x_i-a_i)\cap (\rho\B)=\emptyset$.
Indeed, suppose that $\omega_i-x_i-a_i=x_0$ for some $x_0\in\rho\B$ and $\hat\omega:= (\omega_1,\ldots,\omega_n)\in\widehat\Omega$, and all $i=1,\ldots,n$.
Then, in view of \eqref{C} and the first inequality in \eqref{T4.4P2},
\begin{align*}
\vertiii{\hat\omega-\hat x}
\le \vertiii{(x_0,\ldots,x_0)_n}+\vertiii{\hat a}\le \kappa_2\|x_0\|+\vertiii{\hat a}<(\kappa_2+\al)\rho,
\end{align*}
and, by \eqref{T4.4P1},
$\ang{\hat x^*,\hat\omega-\hat x}<\xi\rho$.
Combining this with the second inequality in \eqref{T4.4P2}, we obtain
\begin{align*}
\Big\langle \sum_{i=1}^{n}x^*_i,x_0\Big\rangle=\langle \hat x^*,(x_0,\ldots,x_0)_n \rangle=\langle \hat x^*,\hat\omega-\hat x\rangle-\langle \hat x^*,\hat a\rangle<-\be\rho.
\end{align*}	
On the other hand, by the inequality in \eqref{T4.3-5},
$\langle \sum_{i=1}^{n}x^*_i,x_0\rangle> -\be\rho,$
a contradiction.
Hence, condition \eqref{D2.5.3-1} is satisfied, and $\{\Omega_1,\ldots,\Omega_n\}$
is approximate $\al$-stationary at $\{x^k\}$.
\end{proof}

The next corollary generalizes and improves the extended extremal principle in Theorem~\ref{T1.4}.

\begin{corollary}
[Sequential extended extremal principle]
\label{C4.5}
Let $X$ be Asplund, $\Omega_1,\ldots,\Omega_n$ 
be closed, and $\{x^k\}\subset X$.
The following assertions are equivalent:
\begin{enumerate}
\item
\label{C4.5.1}
$\{\Omega_1,\ldots,\Omega_n\}$
is approximately stationary at
{$\{x^k\}$};
\item	
\label{C4.5.2}
for any $\varepsilon>0$ and $\tau\in(0,1)$, there exist {a $k>\eps\iv$, and points $x_0\in\eps\B_X$,
$\hat x:=(x_1,\ldots,x_n)$, $\hat x':=(x'_1,\ldots,x'_n)\in\widehat\Omega\cap B_\eps((x^k,\ldots,x^k)_n)$, $\hat a:=(a_1,\ldots,a_n)\in\eps\B_{X^n}$, $\hat x^*:=(x_1^*,\ldots,x_n^*)\in N^F_{\widehat\Omega}(\hat x)$} such that
conditions \eqref{T1.4-2} and \eqref{T4.3-2} are satisfied;
\item	
\label{C4.5.3}
for any $\varepsilon>0$, there exist {a $k>\eps\iv$, and points
$\hat x:=(x_1,\ldots,x_n)\in\widehat\Omega\cap B_\eps((x^k,\ldots,x^k)_n)$, $\hat x^*:=(x_1^*,\ldots,x_n^*)\in N^F_{\widehat\Omega}(\hat x)$}
such that
conditions \eqref{T1.4-2} are satisfied.
\end{enumerate}
\end{corollary}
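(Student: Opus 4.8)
The plan is to derive all three equivalences by specialising the two preceding separation results to the Asplund/\Fr\ setting and letting the bound on $\big\|\sum_{i=1}^nx_i^*\big\|$ shrink to zero. The bridge throughout is the elementary observation recorded after Definition~\ref{D3.5}: the collection is approximately stationary at $\{x^k\}$ if and only if it is approximately $\al$-stationary at $\{x^k\}$ for \emph{every} $\al>0$. This is what lets me trade the quantitative parameter $\al$ against the free $\eps$ in the dual conditions.

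For \eqref{C4.5.1}~\folgt~\eqref{C4.5.2} I fix $\eps>0$ and $\tau\in(0,1)$, choose any $\al\in(0,\eps)$, and set $\be:=\eps$, so that $\be>\al$. Since \eqref{C4.5.1} gives approximate $\al$-stationarity, I apply the Asplund assertion \eqref{T4.2-2} of Theorem~\ref{T4.3} with these $\eps$, $\be$ and $\tau$. Its conclusion supplies an integer $k>\eps\iv$ and points in $B_\eps((x^k,\ldots,x^k)_n)$ together with $\hat x^*\in N^F_{\widehat\Omega}(\hat x)$ satisfying \eqref{T4.3-2} and \eqref{T4.3-5}; because $\be=\eps$, the inequality in \eqref{T4.3-5} is exactly that in \eqref{T1.4-2}. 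This is precisely \eqref{C4.5.2}.

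The implication \eqref{C4.5.2}~\folgt~\eqref{C4.5.3} is immediate: \eqref{C4.5.3} is the simplified form of \eqref{C4.5.2} obtained by discarding condition \eqref{T4.3-2} together with the auxiliary points $x_0$, $\hat a$ and $\hat x'$ occurring only there (cf. Remark~\ref{R3}\,\eqref{R3.2}), both statements using the \Fr\ normal cone.

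The substantive direction is \eqref{C4.5.3}~\folgt~\eqref{C4.5.1}, for which I invoke the sufficiency part \eqref{c1} of Theorem~\ref{T4.4}. It suffices to establish approximate $\al$-stationarity for an arbitrary $\al>0$. Fixing such $\al$, I pick $\be\in(0,\al)$. Assertion \eqref{C4.5.3} holds for every $\eps>0$; applying it with $\min\{\eps,\be\}$ in place of $\eps$ shows that for each $\eps>0$ there exist the points required in assertion \eqref{T4.4-3} of Theorem~\ref{T4.4}, now with $\big\|\sum_{i=1}^nx_i^*\big\|<\be$ and all points inside $B_\eps((x^k,\ldots,x^k)_n)$. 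Hence \eqref{C4.5.3} entails \eqref{T4.4-3} for this $\be$, and since $\al>\be$, part \eqref{c1} of Theorem~\ref{T4.4} yields approximate $\al$-stationarity at $\{x^k\}$. As $\al>0$ was arbitrary, the collection is approximately stationary, which is \eqref{C4.5.1}. I expect the only delicate point to be the quantifier bookkeeping in this last step---converting the $\eps$-indexed bound $\big\|\sum_{i=1}^nx_i^*\big\|<\eps$ of \eqref{C4.5.3} into the fixed threshold $\be$ demanded by \eqref{T4.4-3}---which is handled by the substitution $\eps\mapsto\min\{\eps,\be\}$, as shrinking $\eps$ only tightens the neighbourhood constraints.
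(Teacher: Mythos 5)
Your proposal is correct and follows essentially the same route the paper intends: the paper gives no displayed proof, but Remark~\ref{R3.8} attributes \eqref{C4.5.1}~\folgt~\eqref{C4.5.2} to Theorem~\ref{T4.4}\,\eqref{b1} (itself a direct consequence of Theorem~\ref{T4.3}\,\eqref{T4.2-2}), notes that \eqref{C4.5.2}~\folgt~\eqref{C4.5.3} is the simplification of Remark~\ref{R3}\,\eqref{R3.2}, and leaves \eqref{C4.5.3}~\folgt~\eqref{C4.5.1} to Theorem~\ref{T4.4}\,\eqref{c1} via exactly your trade of the arbitrary $\al>0$ against $\be\in(0,\al)$ and the substitution $\eps\mapsto\min\{\eps,\be\}$. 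Your one refinement --- invoking Theorem~\ref{T4.3}\,\eqref{T4.2-2} directly for the first implication --- is in fact the cleaner citation, since assertion \eqref{T4.4-2} of Theorem~\ref{T4.4} is stated with $N^C$ while \eqref{C4.5.2} requires $N^F$.
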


\begin{remark}
\label{R3.8}
\begin{enumerate}
\item
\label{R3.8.1}
Implications \eqref{C4.5.2} \folgt\ \eqref{C4.5.3} \folgt\ \eqref{C4.5.1} in Corollary~\ref{C4.5} are true in the setting of an arbitrary normed vector space and not necessary closed sets $\Omega_1,\ldots,\Omega_n$.
The Asplund property of the space and closedness of the sets are only needed for implication \eqref{C4.5.1}~\folgt~\eqref{C4.5.2} which is a consequence of Theorem~\ref{T4.4}\,\eqref{b1}.
\item
{If $x^k:=\bx$ for all $k\in\N$, then Corollary~\ref{C4.5} covers Theorem~\ref{T1.4}}.
\end{enumerate}
\end{remark}

Reversing the conditions in Definition~\ref{D3.5}, we arrive at extensions of the \emph{transversality} properties discussed in \cite{Kru05,KruTha13,CuoKru21.2}.

\begin{definition}
[Sequential transversality]
\begin{enumerate}
\item
Let $\al>0$.
The collection $\{\Omega_1,\ldots,\Omega_n\}$ is $\al$-trans\-versal at $\{x^k\}\subset X$ if there is an $\varepsilon>0$ such that condition \eqref{P3.6P1} is satisfied
for all $\rho\in(0,\varepsilon)$, integers $k>\eps\iv$, and points
$(x_1,\ldots,x_n)\in\widehat\Omega\cap B_\eps((x^k,\ldots,x^k)_n)$ and $(a_1,\ldots,a_n)\in\al\rho\B_{X^n}$.
\item
The collection $\{\Omega_1,\ldots,\Omega_n\}$ is transversal at $\{x^k\}\subset X$ if it is $\al$-transversal at $\{x^k\}$ for some $\al>0$.
\end{enumerate}
\end{definition}

The statements of Theorem~\ref{T4.3} and its corollaries can also be easily ``reversed'' to produce a dual characterization of transversality.
For instance, Corollary~\ref{C4.5} leads to the following statement.

\begin{corollary}
[Sequential transversality: dual characterization]
\label{C3.12}
Let $X$ be Asplund, and $\Omega_1,\ldots,\Omega_n$
be closed.
The collection $\{\Omega_1,\ldots,\Omega_n\}$ is  transversal at $\{x^k\}\subset X$ if and only if there is an $\varepsilon>0$ such that
$\norm{\sum_{i=1}^{n}x^*_i}\ge\eps$
for all integers $k>\eps\iv$, and points
$\hat x:=(x_1,\ldots,x_n)\in\widehat\Omega\cap B_\eps((x^k,\ldots,x^k)_n)$ and $\hat x^*:=(x_1^*,\ldots,x_n^*)\in N^F_{\widehat\Omega}(\hat x)$ with
$\vertiii{\hat x^*}=1$.
\end{corollary}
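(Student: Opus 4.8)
The plan is to derive the equivalence by contraposition, after which the statement reduces entirely to Corollary~\ref{C4.5}; no new analysis is needed, since all the variational machinery is already packaged in that result. The key observation is that \emph{transversality at $\{x^k\}$ is exactly the negation of approximate stationarity at $\{x^k\}$}. To see this, I would first compare the definition of $\al$-transversality with Definition~\ref{D3.5}: the two statements involve the same data $(k,\rho,(x_1,\ldots,x_n),(a_1,\ldots,a_n))$ ranging over the same sets, and differ only in that $\al$-transversality asserts the nonempty intersection \eqref{P3.6P1} under a leading $\exists\,\eps>0$ with all inner quantifiers universal, whereas approximate $\al$-stationarity asserts the empty intersection \eqref{D2.5.3-1} under a leading $\forall\,\eps>0$ with all inner quantifiers existential. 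Since \eqref{P3.6P1} is the literal negation of \eqref{D2.5.3-1}, the De Morgan dual of Definition~\ref{D3.5} is word-for-word the definition of $\al$-transversality; thus $\al$-transversality at $\{x^k\}$ holds if and only if approximate $\al$-stationarity at $\{x^k\}$ fails.

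Using the remark following Definition~\ref{D3.5}, namely that $\{\Omega_1,\ldots,\Omega_n\}$ is approximately stationary at $\{x^k\}$ if and only if it is approximately $\al$-stationary for every $\al>0$, I would then pass to the existential quantifier over $\al$: the collection is transversal (i.e.\ $\al$-transversal for some $\al>0$) if and only if there is some $\al>0$ for which approximate $\al$-stationarity fails, which is precisely the failure of approximate stationarity at $\{x^k\}$.

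Next I would identify the negation of the claimed dual condition with assertion~\eqref{C4.5.3} of Corollary~\ref{C4.5}. The dual condition states that there is an $\eps>0$ with $\norm{\sum_{i=1}^n x_i^*}\ge\eps$ for every admissible triple, i.e.\ for all $k>\eps\iv$, all $\hat x\in\widehat\Omega\cap B_\eps((x^k,\ldots,x^k)_n)$ and all $\hat x^*\in N^F_{\widehat\Omega}(\hat x)$ with $\vertiii{\hat x^*}=1$. Negating this (and noting that the single parameter $\eps$ governs the index bound, the radius, and the threshold simultaneously) yields exactly: for every $\eps>0$ there exist $k>\eps\iv$, $\hat x\in\widehat\Omega\cap B_\eps((x^k,\ldots,x^k)_n)$ and $\hat x^*\in N^F_{\widehat\Omega}(\hat x)$ with $\vertiii{\hat x^*}=1$ and $\norm{\sum_{i=1}^n x_i^*}<\eps$, which is conditions~\eqref{T1.4-2}, i.e.\ assertion~\eqref{C4.5.3}. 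Invoking the equivalence \eqref{C4.5.1}~$\Leftrightarrow$~\eqref{C4.5.3} of Corollary~\ref{C4.5} (valid because $X$ is Asplund and the sets are closed), the negation of the dual condition is equivalent to approximate stationarity at $\{x^k\}$. Combining with the first two paragraphs, the dual condition is equivalent to the failure of approximate stationarity, hence to transversality, which is the claim.

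The argument carries no analytic difficulty; the only delicate point, and the place I would be most careful, is the bookkeeping of the nested quantifiers. I would write out in full both De Morgan negations---that of Definition~\ref{D3.5} (to confirm it reproduces $\al$-transversality verbatim) and that of the dual inequality (to confirm it reproduces \eqref{C4.5.3} with $\eps$ consistently playing its three roles)---since this is exactly where a flipped quantifier or a mismatched use of $\eps$ would introduce an error.
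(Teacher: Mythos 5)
Your proposal is correct and follows exactly the route the paper intends: the paper gives no separate proof of Corollary~\ref{C3.12}, stating only that it arises by ``reversing'' Corollary~\ref{C4.5}, and your argument---identifying $\al$-transversality as the literal De Morgan negation of approximate $\al$-stationarity in Definition~\ref{D3.5}, passing through the stated equivalence with approximate stationarity, and matching the negated dual inequality with assertion \eqref{C4.5.3}---is precisely that reversal spelled out. Your quantifier bookkeeping is accurate throughout, and your contrapositive structure even recovers Remark~\ref{R3.11}, since the direction ``transversality $\Rightarrow$ dual condition'' uses only the implication \eqref{C4.5.3}~\folgt~\eqref{C4.5.1}, which holds in arbitrary normed spaces.
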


\begin{remark}
\label{R3.11}
The ``only if'' part of Corollary~\ref{C3.12} is true in the setting of an arbitrary normed vector space and not necessary closed sets $\Omega_1,\ldots,\Omega_n$; cf. Remark~\ref{R3.8}\,\eqref{R3.8.1}.
\end{remark}

\if{
\paragraph*{Convex case.}
In the convex setting, the above mentioned properties admit simpler forms, and in some cases, are equivalent.
\if{
\NDC{19/10/25.
This subsection should be shortened.}
}\fi
\AK{24/01/25.
Yes, it should.
My suggestions:

1) All ``at a point'' assertions should be removed.

2) The argument in the proof of Lemma~\ref{L4.10} can be included in the other proof(s) where it is needed.

3) I have doubts about Proposition~\ref{P3.16} even if properly formulated and even in finite dimensions.I am not sure if the function mentioned in the comment is continuous.

4) $\al$-stationarity in Definition~\ref{D3.18} should be dropped.
I am not absolutely sure even if approximate $\al$-stationarity in Definition~\ref{D3.5} should be kept.
I included it to make a bridge to $\al$-transversality, but we do not use any of them in this paper.

5) Proposition~\ref{P3.19}\,\eqref{P3.19.2} can potentially be of interest.
I think $\eps$ in the last condition should be replaced by an arbitrarily large $\rho$.
On the other hand, it would probably be better to minimize discussions of $\al$-properties in this paper and drop this proposition.

6) What is approximate stationarity at $n$ sequences in Propositions~\ref{P3.16} and \ref{P3.5}?

7) Out of this subsection (paragraph?) I would probably keep only (an updated version of) Proposition~\ref{P3.5} once the issue with the approximate stationarity mentioned above is fixed.
}
}\fi

\section{Sequential minimality and stationarity}
\label{S4}
\if{
Optimization problem with set-valued constraints:
\begin{gather}
\label{P2}
\tag{$\mathcal{P}$}
\text{minimize }\;F_0(x)\quad \text{subject to }\; F_i(x)\cap K_i\ne\es\; (i=1,\ldots,n),\;\; x\in\Omega,
\end{gather}
where $F_i:X\toto Y_i$ $(i=0,\ldots,n)$ are mappings between normed spaces, $\Omega\subset X$, $K_i\subset Y_i$ $(i=1,\ldots,n)$,
{and $Y_0$ is equipped with a level-set mapping $L$.}
The ``functional'' constraints in \eqref{P2} can model a system of equalities and inequalities as well as more general operator-type constraints.
}\fi

To illustrate the model studied in the previous sections, we consider the following constrained optimization problem:
\begin{gather}
\label{P}
\tag{${P}$}
\text{minimize }\;f(x)\quad \text{subject to }\; x\in\Omega,
\end{gather}
where $\Omega$ is a nonempty subset of a normed vector space $X$.
Below, we recall the conventional definition of minimizing sequence and introduce its localized version.

\begin{definition}
[Minimizing sequence]
\label{D4.1}
\begin{enumerate}
\item
A sequence $\{x^k\}\subset\Omega$ is
minimizing for problem \eqref{P} if $f(x^k)\to\inf_\Omega f$.
\item
\label{D4.1.2}
A sequence $\{x^k\}\subset\Omega$ is
minimizing for problem \eqref{P} at level $\mu_0\in\R$ if $f(x^k)\to\mu_0$ as $k\to+\infty$,
and there exist a $\rho\in(0,+\infty]$ and a $k_0>0$ such that
\begin{gather}
\label{D4.1-1}
f(x)\ge\mu_0
\;\;
\text{for all}
\;\;
x\in\Omega\cap B_\rho(x^k)
\;\;
\text{and all integers}
\;\;
k>k_0.
\end{gather}
\end{enumerate}
\end{definition}

The assertions in the next proposition are immediate consequences of the definitions.
They show, in particular, that the properties in Definition~\ref{D4.1} are not too different.

\begin{proposition}
Let $\{x^k\}\subset\Omega$ and $\mu_0\in\R$.
The following assertions hold true:
\begin{enumerate}
\item
if $\{x^k\}$ is minimizing for problem \eqref{P} at level $\mu_0$ with some $\rho\in(0,+\infty]$, and $x^k=\bx$ for all $k\in\N$, then $f(\bx)=\mu_0=\min_{\Omega\cap B_\rho(\bx)}f$;
\item
if $\{x^k\}$ is minimizing for problem \eqref{P} and $\inf_\Omega f\in\R$, then it is minimizing for problem~\eqref{P} at level $\inf_\Omega f$ with $\rho=+\infty$;
\item
if $\{x^k\}$ is
minimizing for problem \eqref{P} at level $\mu_0$, then $\mu_0\ge\inf_\Omega f$;
\item
if $\{x^k\}$ is
minimizing for problem \eqref{P} at level $\mu_0$ with $\rho=+\infty$, then it is minimizing for problem \eqref{P} and $\mu_0=\inf_\Omega f$.
\end{enumerate}
\end{proposition}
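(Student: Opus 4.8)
The plan is to verify each of the four assertions directly from Definition~\ref{D4.1}, since all of them reduce to elementary manipulations of limits together with the defining property of the infimum. The only two observations that do any real work are: (a) when $\rho=+\infty$ the ball $B_\rho(x^k)$ is the whole space $X$, so that $\Omega\cap B_\rho(x^k)=\Omega$ and the local lower bound in \eqref{D4.1-1} becomes a global one; and (b) every term of a sequence $\{x^k\}\subset\Omega$ already satisfies $f(x^k)\ge\inf_\Omega f$.

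For assertion (i), I would start from the constancy $x^k=\bx$. Since $f(x^k)\to\mu_0$ and $f(x^k)=f(\bx)$ for every $k$, the limit forces $f(\bx)=\mu_0$. The lower bound \eqref{D4.1-1} then reads $f(x)\ge\mu_0$ for all $x\in\Omega\cap B_\rho(\bx)$; as $\bx$ itself lies in this set and attains the value $\mu_0=f(\bx)$, the infimum over $\Omega\cap B_\rho(\bx)$ is attained at $\bx$, giving $\mu_0=\min_{\Omega\cap B_\rho(\bx)}f$.

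Assertions (ii) and (iii) are one-line limit arguments. For (ii), I would set $\mu_0:=\inf_\Omega f\in\R$; then $f(x^k)\to\mu_0$ by the definition of a minimizing sequence, and with $\rho=+\infty$ observation (a) turns \eqref{D4.1-1} into the trivially valid statement $f(x)\ge\inf_\Omega f$ for all $x\in\Omega$, so any $k_0$ works. For (iii), observation (b) gives $f(x^k)\ge\inf_\Omega f$ for every $k$, and passing to the limit yields $\mu_0\ge\inf_\Omega f$ (trivially so if $\inf_\Omega f=-\infty$).

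Finally, for (iv) I would combine the two inequalities. With $\rho=+\infty$, observation (a) makes \eqref{D4.1-1} read $f(x)\ge\mu_0$ for all $x\in\Omega$, whence $\inf_\Omega f\ge\mu_0$; together with the reverse inequality $\mu_0\ge\inf_\Omega f$ supplied by assertion (iii) this gives $\mu_0=\inf_\Omega f$, and since $f(x^k)\to\mu_0=\inf_\Omega f$ the sequence is minimizing for \eqref{P}. No step presents a genuine obstacle; the whole proof is a matter of carefully reading off the definitions, the only mild care being needed in handling the $\rho=+\infty$ case and in allowing $\inf_\Omega f=-\infty$ in assertion (iii).
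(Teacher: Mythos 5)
Your proof is correct and takes exactly the intended route: the paper offers no written proof at all, remarking only that the assertions are ``immediate consequences of the definitions,'' and your argument is precisely that reading-off of Definition~\ref{D4.1} (constancy of the sequence plus attainment at $\bx$ for (i), triviality of \eqref{D4.1-1} when $\rho=+\infty$ for (ii), $f(x^k)\ge\inf_\Omega f$ passed to the limit for (iii), and the combination of the global bound with (iii) for (iv)). Nothing is missing, including the minor care you take with $\inf_\Omega f=-\infty$ in (iii).
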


\begin{corollary}
\label{C4.3}
Let $\inf_\Omega f\in\R$.
A sequence $\{x^k\}\subset\Omega$ is minimizing for problem \eqref{P} if and only if it is minimizing for problem~\eqref{P} at level $\inf_\Omega f$ with $\rho=+\infty$.
\end{corollary}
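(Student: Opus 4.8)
The plan is to obtain the corollary directly from the preceding proposition, since the statement is nothing but the specialization of two of its assertions to the choice $\mu_0:=\inf_\Omega f$ under the standing hypothesis $\inf_\Omega f\in\R$. Both implications of the desired equivalence are already packaged there, so no new estimates or constructions are needed; the entire task reduces to matching each direction of the ``if and only if'' to the appropriate assertion and checking that their hypotheses are met.

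For the ``only if'' direction I would assume that $\{x^k\}$ is minimizing for problem~\eqref{P} and invoke assertion~(ii) of the preceding proposition. Since $\inf_\Omega f\in\R$ by hypothesis, that assertion applies verbatim and yields that $\{x^k\}$ is minimizing for problem~\eqref{P} at level $\inf_\Omega f$ with $\rho=+\infty$, which is exactly the required conclusion.

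For the ``if'' direction I would assume that $\{x^k\}$ is minimizing for problem~\eqref{P} at level $\inf_\Omega f$ with $\rho=+\infty$ and apply assertion~(iv) of the same proposition, taking $\mu_0:=\inf_\Omega f$. That assertion guarantees both that $\{x^k\}$ is minimizing for problem~\eqref{P} and that the level coincides with $\inf_\Omega f$; in particular the minimizing property follows and the two notions of level are consistent.

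I do not expect any genuine obstacle, as the statement is a purely bookkeeping consequence of the proposition. The single point worth recording is the role of the hypothesis $\inf_\Omega f\in\R$ (as opposed to $-\infty$): it is precisely what licenses assertion~(ii), ensuring that $\mu_0:=\inf_\Omega f$ is an admissible real level in the sense of Definition~\ref{D4.1}\,\eqref{D4.1.2}. With that observation in place, both implications are immediate and the equivalence follows.
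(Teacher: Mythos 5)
Your proposal is correct and matches the paper's (implicit) argument exactly: the corollary is stated as an immediate consequence of the preceding proposition, obtained by combining its assertion~(ii) for the ``only if'' direction and assertion~(iv) with $\mu_0:=\inf_\Omega f$ for the ``if'' direction, just as you do. Your remark that the hypothesis $\inf_\Omega f\in\R$ is what makes $\inf_\Omega f$ an admissible level in Definition~\ref{D4.1}\,\eqref{D4.1.2} is also the right observation.
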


\begin{example}
[Minimizing sequence at level $0$]
\label{E4.2}
Let $\Omega=\R$, $f(x)=1/x$ for all $x\ne0$ and $f(0)=+\infty$; see Figure~\ref{fig5}.
Any sequence of real numbers $x_k\to+\infty$ is
minimizing for \eqref{P} at level $0$.
Observe that it is not a minimizing sequence.
\begin{figure}[H]
\centering
\includegraphics[width=0.495\linewidth]{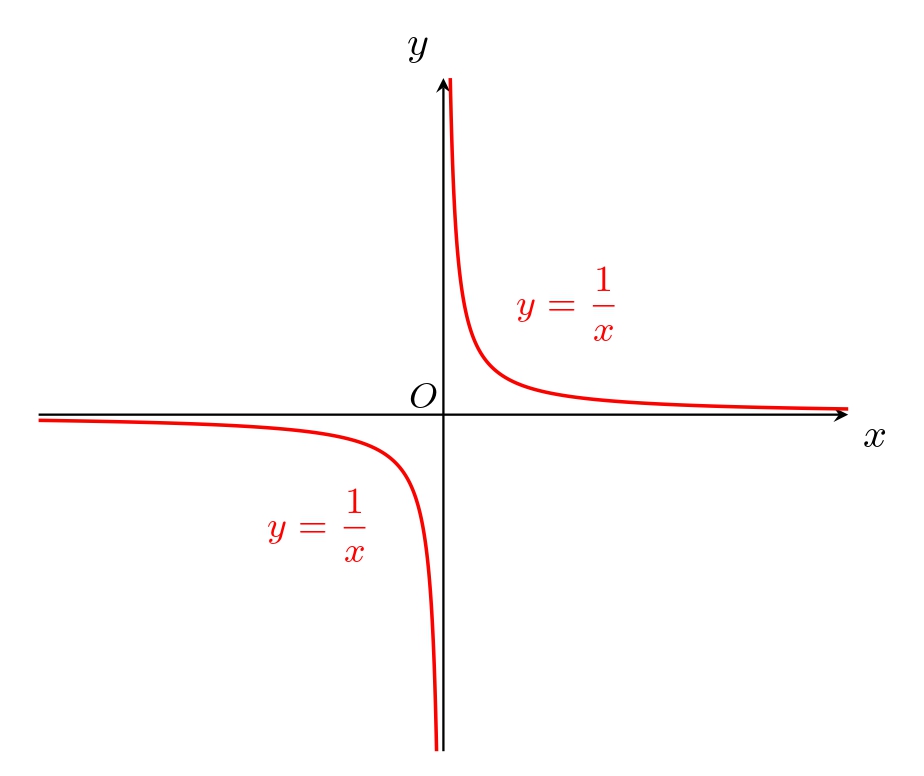}
\caption{Example~\ref{E4.2}}
\label{fig5}
\end{figure}
\end{example}

The stationarity properties in the next definition are counterparts of the corresponding ones in Definition~\ref{D3.1}.

\begin{definition}
[Stationarity]
\label{D4.3}
\begin{enumerate}
\item
\label{D4.3.1}
A sequence $\{x^k\}\subset\Omega$ is firmly
$\inf$-stationary for problem \eqref{P} at level $\mu_0\in\R$ if $f(x^k)\to\mu_0$ as $k\to+\infty$, and there is a $\rho\in(0,+\infty]$ such that
\begin{gather}
\label{D4.3-1}
\limsup_{k\to+\infty}\;\inf_{\Omega\cap B_\rho(x^k)}f=\mu_0.
\end{gather}
\item
\label{D4.3.2}
A sequence $\{x^k\}\subset\Omega$ is
$\inf$-stationary for problem \eqref{P} at level $\mu_0\in\R$ if $f(x^k)\to\mu_0$ as $k\to+\infty$,
and
\begin{gather}
\label{D4.1-2}
\limsup_{k\to+\infty,\,\rho\downarrow0} \frac{\inf_{\Omega\cap B_\rho(x^k)}f-f(x^k)}\rho=0.
\end{gather}
\item
\label{D4.3.3}
A sequence $\{x^k\}\subset X$ is approximately $\inf$-stationary for problem \eqref{P} at level $\mu_0\in\R$ if
\begin{gather}
\label{D4.1-3}
\limsup_{\substack{k\to+\infty,\,\rho\downarrow0\\ u^k\in\Omega,\,u^k-x^k\to0,\,f(u^k)\to\mu_0}} \frac{\inf_{\Omega\cap B_\rho(u^k)}f-f(u^k)}\rho=0.
\end{gather}
\end{enumerate}
\end{definition}
\if{
\AK{24/01/25.
Does the definition cover your model involving the ``\nbh\ of infinity'' and the model(s) in
\cite{KimTunSon25,kimsontuntuy23}?

What about dropping the second parts of Definitions~\ref{D1.3}, \ref{D3.1} and \ref{D4.1}?}

\NDC{25/01/25.
In the mentioned papers, only global minimality at `infinity' was considered, i.e. $\mu_0:=\inf_{\Omega}f$ where the minimum is not attained.
Definition~\ref{D4.1} covers this and includes the conventional minimality.

I think dropping the stationarity and approximate stationarity properties makes the paper neater.
From the reader's point of view, I think it is good since people are often scared of several definitions.
These properties/results can be studied in subsequent paper(s).
}
\AK{1/02/25.
I am only thinking about dropping stationarity.
Approximate stationarity seems important as a bridge to (quantitative) transversality.}

\NDC{3/2.25
At first glance, jumping from (i) to (iii) (skipping (ii)) in Definition~\ref{D4.3} does not seem natural to me.
}
}\fi

\begin{remark}
\label{R4}
\begin{enumerate}
\item\label{R4.0}
If $x^k=\bx$ for all $k\in\N$, the property in part \eqref{D4.3.1} of Definition~\ref{D4.3} coincides with that in Definition~\ref{D4.1}\,\eqref{D4.1.2}, and $f(\bx)=\mu_0=\min_{\Omega\cap B_\rho(\bx)}f$.
If, additionally, $\Omega=X$, the properties in parts \eqref{D4.3.2} and \eqref{D4.3.3} reduce to the, respectively, $\inf$-stationarity and approximate $\inf$-stationarity (weak $\inf$-stationarity) studied in \cite{Kru06.2,Kru09}.
\item
\label{R4.1}
The property in Definition~\ref{D4.1}\,\eqref{D4.1.2} implies firm stationarity in Definition~\ref{D4.3}\,\eqref{D4.3.1}, and
\eqref{D4.3.1}~$\Rightarrow$~\eqref{D4.3.2} $\Rightarrow$ \eqref{D4.3.3} in Definition~\ref{D4.3}.
The converse implications are not true in general. See \cite[Examples~1--4]{Kru06.2} for the case $x^k:=\bx$ $(k\in\N)$.
The sequential case is illustrated in Examples~\ref{E4.3}, \ref{E4.4} and \ref{E4.5} below.
\end{enumerate}
\end{remark}
\if{
\AK{1/02/25.
Firm inf-stationarity?}
}\fi
\begin{example}
[firm inf-stationarity]
\label{E4.3}
Let $\Omega=\R$, $f(x)=1/x$ for all $x\ne0$ and $f(0)=+\infty$ (see Example~\ref{E4.2}).
For the sequence $x_k:=-k$ $(k\in\N)$, we have $f(x^k)\to0$ as $k\to+\infty$, but the sequence
is not
minimizing
for \eqref{P} at level $0$ as $f(x)<0$ for all $x<0$.
For $\rho:=1$,
\begin{gather*}
 \limsup_{k\to+\infty}\;\inf_{\Omega\cap B_\rho(x^k)}f=\lim_{k\to+\infty}\frac{1}{-k+1}=0.
\end{gather*}
By Definition~\ref{D4.3} \eqref{D4.3.1}, $\{x^k\}$ is firmly
$\inf$-stationary for \eqref{P} at level $0$.
\end{example}

\begin{example}
[inf-stationarity]
\label{E4.4}
Let $\Omega=\R$, $f(x)=1/|x|-(x-k)^2$ for all $x\in[k-1/2,k+1/2)$,
$k=\pm1,\pm2,\ldots$ and $f(x)=7/4$ for all $x\in(-1/2,1/2)$; see Figure~\ref{fig4}.
For $x_k:=k$, we have
$f(x^k)=1/k\to0$ as $k\to+\infty$, and
for any $\rho>0$,
\begin{align*}
\inf_{B_\rho(x^k)}f= \frac1{k+\rho}-\min\Big\{\rho^2,\frac14\Big\}
\AND
\limsup_{k\to+\infty}\inf_{B_\rho(x^k)}f= -\min\Big\{\rho^2,\frac14\Big\}<0.
\end{align*}
By Definition~\ref{D4.3}\,\eqref{D4.3.1}, $\{x^k\}$ is not firmly
$\inf$-stationary for \eqref{P} at level 0.
At the same time,
\begin{align*}
\limsup_{\substack{k\to+\infty\\\rho\downarrow0}} \frac{\inf_{B_\rho(x^k)}f-f(x^k)}\rho=&
\lim_{\substack{k\to+\infty\\\rho\downarrow0}} \frac{\frac1{k+\rho}-\rho^2-\frac1{k}}\rho=
-\lim_{\substack{k\to+\infty\\\rho\downarrow0}} \Big(\frac1{k(k+\rho)}+\rho\Big)=0.
\end{align*}
By Definition~\ref{D4.3}\,\eqref{D4.3.2}, $\{x^k\}$ is $\inf$-stationary for \eqref{P} at level 0.
\begin{figure}[H]
\centering
\includegraphics[width=0.5\linewidth]{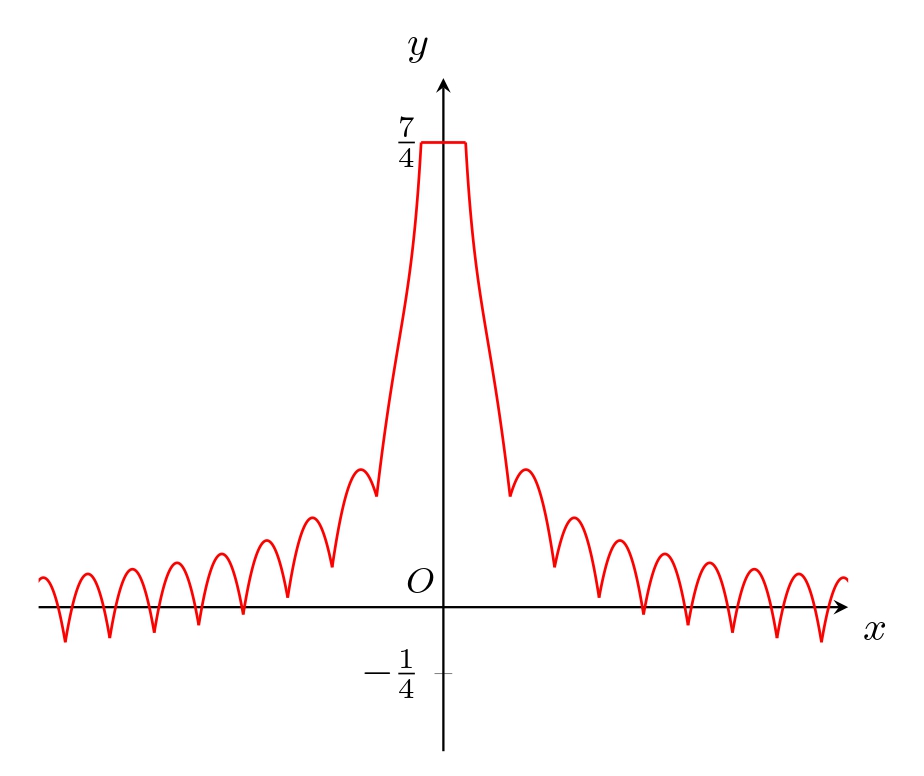}
\caption{Example~\ref{E4.4}}
\label{fig4}
\end{figure}
\if{
\AK{10/02/25.
Why is the plot stretched vertically so much?
Why two colours?
The example has been modified; please check.}
}\fi
\end{example}

\begin{example}
[approximate inf-stationarity]
\label{E4.5}
Let $\Omega=\R$.
Consider a continuous function
$$f(x)=
\begin{cases}
(x-2k\pi\iv)\sin\frac1{x-{2k}\pi\iv}& \text{if } 0<|x-{2k}\pi\iv|\le\pi\iv,\;\; k=0,\pm1,\pm2,\ldots,\\
0& \text{otherwise};
\end{cases}
$$
see Figure~\ref{fig6}.
For the sequence $x^k:={2k}\pi\iv$ $(k\in\N)$, we have $f(x^k)=0$ for all $k\in\N$, but the sequence is not
{inf-stationary}
for \eqref{P} at level $0$.
Indeed, for any $k\in\N$ and $\rho\in(0,\pi\iv)$, set $j:=\lceil\frac1{2\pi\rho}+\frac14\rceil$,
${\de}:=\frac2{(4j-1)\pi}$ and $x'^k:=x^k+\de$.
Then
$\frac1{2\pi+{\rho}\iv}<\de\le\rho$ and $f(x'^k)=\de\sin\frac1{\de}=\de\sin(-\frac\pi{2}+2j\pi)=-\de$,
and consequently,
\begin{gather*}
\limsup_{k\to+\infty,\,\rho\downarrow0} \frac{\inf_{B_\rho(x^k)}f-f(x^k)}\rho\le
\limsup_{k\to+\infty,\,\rho\downarrow0} \frac{f(x'^k)}\rho\le
-\lim_{\rho\downarrow0} \frac1{2\pi\rho+1}=-1.
\end{gather*}
By Definition~\ref{D4.3} \eqref{D4.3.2}, $\{x^k\}$ is not inf-stationary for \eqref{P} at level $0$.

For each $k\in\N$, set $\de_k:=\frac1{2k\pi-\pi/2}$, $u^k:=x^k+\de_k$ and $\rho_k:=\frac1{4\pi k^2}$.
Then $u^k-x^k=\de_k\to0$, $f(u^k)=-\de_k\to0$ and $\rho_k\downarrow0$ as $k\to+\infty$.
Furthermore,
\begin{align*}
\frac1{\de_k+\rho_k} =\pi\Big(\frac1{2k-1/2}+\frac1{4k^2}\Big)\iv &>\pi\Big(\frac1{2k-1/2}+\frac1{(2k-3/2)(2k-1/2)}\Big)\iv \\&
=\pi(2k-3/2)=\de_k\iv-\pi,
\\
\frac1{\de_k-\rho_k} =\pi\Big(\frac1{2k-1/2}-\frac1{4k^2}\Big)\iv
&<\pi\Big(\frac1{2k-1/2}-\frac1{4k^2-1/4}\Big)\iv
\\&
=\pi(2k+1/2)=\de_k\iv+\pi.
\end{align*}
Thus, $\de_k\iv-\pi<(\de_k+\rho_k)\iv<\de_k\iv <(\de_k-\rho_k)\iv<\de_k\iv+\pi$.
In view of the definition of $f$, the point $u^k$ is the minimum of $f$ on $(u^k-\rho_k,u^k+\rho_k)$.
By Definition~\ref{D4.3}\,\eqref{D4.3.3},
$\{x^k\}$ is approximately inf-stationary  for  \eqref{P} at level $0$.
\begin{figure}[H]
\centering
\includegraphics[width=0.5\linewidth]{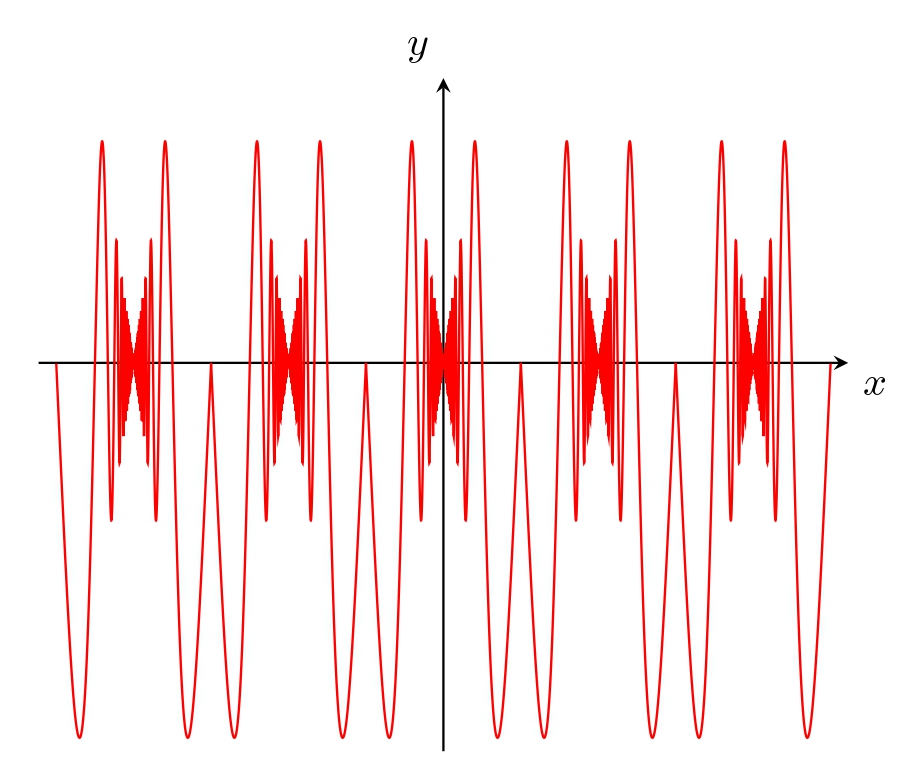}
\caption{Example~\ref{E4.5}}
\label{fig6}
\end{figure}
\end{example}
\if{
\AK{8/02/25.
I think the $y$ axis should be shortened at least twice.
Why two colours?

In this and the previous example, we just replicate the same pattern infinitely many times.
This does not agree well with our aim to characterize special behaviour ``at infinity''.}
\NDC{9/2/25. Yes, I agree. I think that these examples exist but it is not obvious to find.}
\AK{11/02/25.
I have updated the previous example.
It would be good to make the magnitude of the oscillations in the last one go to zero as $x\to+\infty$.}
\NDC{14/2/25. The current number of oscillations in the picture is the largest one that Overleaf allows me to run as a free account.}
}\fi

\section{Sequential necessary optimality and stationarity conditions}
\label{S5}

To embed problem \eqref{P} into the model studied in Section~\ref{S2},
given a $\mu_0\in\R$, we consider a pair of sets in $X\times\R$:
\begin{gather}	\label{Om}
\Omega_1:=\epi f
\;\;
\text{and}
\;\;
\Omega_2:=\Omega\times(-\infty,\mu_0].
\end{gather}
We are going to use the maximum product norms of the type \eqref{norm} and the corresponding dual norms.
\if{
\begin{gather*}
\|(u,\al)\|=\max\{\|u\|,|\al|\}\quad
\text{for all}\;\; u\in X,\;\;\al\in\R,
\\
\vertiii{(w_1,w_2)}=\max\{\|w_1\|,\|w_2\|\}\quad
\text{for all}\;\; w_1,w_2\in X\times\R.
\end{gather*}
}\fi

The next proposition relates the properties in Definition~\ref{D4.1} with the corresponding ones in
Definition~\ref{D3.1}.

\begin{proposition}
\label{P4.2}
Let $\mu_0\in\R$, and the sets $\Omega_1$ and $\Omega_2$ be given by \eqref{Om}.
\begin{enumerate}
\item
\label{P4.2.1}
If
$\{x^k\}\subset\Omega$ is
firmly $\inf$-stationary
 for \eqref{P}
at level $\mu_0$, then
$\{\Omega_1,\Omega_2\}$ is extremal at
$\{(x^k,f(x^k))\}\subset\Omega_1$ and $\{(x^k,\mu_0)\}\subset\Omega_2$.
\item\label{P4.2.2}
If
$\{x^k\}\subset\Omega$ is
$\inf$-stationary
for
\eqref{P} at level $\mu_0$, then
$\{\Omega_1,\Omega_2\}$ is stationary at
$\{(x^k,f(x^k))\}\subset\Omega_1$ and $\{(x^k,\mu_0)\}\subset\Omega_2$.
\item
\label{P4.2.3}
If
$\{x^k\}\subset X$ is
approximately $\inf$-stationary
for
\eqref{P} at level $\mu_0$, then
$\{\Omega_1,\Omega_2\}$ is approximately stationary at
$\{(x^k,\mu_0)\}$.
\end{enumerate}
\end{proposition}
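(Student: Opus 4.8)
The three parts share a single geometric core, so the plan is to isolate that computation once and then feed each stationarity hypothesis into it. The core claim I would establish is an epigraphical separation fact: fix $u\in\Omega$, $\rho\in(0,+\infty]$ and $\beta>0$, take base points $w_1:=(u,f(u))\in\Omega_1$ and $w_2:=(u,\mu_0)\in\Omega_2$, and perturbations $a_1:=(0,0)$, $a_2:=(0,\beta)$ in $X\times\R$. Then
\begin{gather*}
(\Omega_1-w_1-a_1)\cap(\Omega_2-w_2-a_2)\cap(\rho\B)=\es
\end{gather*}
as soon as $\beta>f(u)-\inf_{\Omega\cap B_\rho(u)}f$. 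To verify this I would suppose a point $(y,\gamma)$ lies in the intersection and unwind the three memberships: the first gives $f(y+u)\le\gamma+f(u)$, the second gives $y+u\in\Omega$ and $\gamma\le-\beta$, and the ball constraint gives $\|y\|<\rho$. Setting $v:=y+u$ then yields $v\in\Omega\cap B_\rho(u)$ with $f(v)\le f(u)-\beta$, contradicting the choice of $\beta$. This one computation (with the maximum norm on $X\times\R$ making all diameter and ball bookkeeping transparent) does all the geometric work; it covers $\rho=+\infty$ uniformly.

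With the core in hand, part \eqref{P4.2.1} proceeds as follows. Since $f(x^k)\to\mu_0$, the sequences $w_1^k:=(x^k,f(x^k))\in\Omega_1$ and $w_2^k:=(x^k,\mu_0)\in\Omega_2$ satisfy $\diam\{w_1^k,w_2^k\}=|f(x^k)-\mu_0|\to0$, giving the distance requirement of Definition~\ref{D3.1}\,\eqref{D3.1.1}. I would take $\rho$ to be the radius supplied by firm $\inf$-stationarity, put $m_k:=\inf_{\Omega\cap B_\rho(x^k)}f$, and note $m_k\le f(x^k)$. Combining $\limsup_k m_k=\mu_0$ with $f(x^k)\to\mu_0$ gives $\liminf_k(f(x^k)-m_k)=0$, so for every $\eps>0$ some $k>\eps\iv$ satisfies $f(x^k)-m_k<\eps$; choosing $\beta\in(f(x^k)-m_k,\eps)$ and applying the core with $u:=x^k$ furnishes the emptiness with $(a_1,a_2)\in\eps\B$. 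Part \eqref{P4.2.2} is structurally identical but quantitative: I would first unpack $\inf$-stationarity into the working statement that for every $\eps>0$ there are $k>\eps\iv$ and $\rho\in(0,\eps)$ with $(f(x^k)-\inf_{\Omega\cap B_\rho(x^k)}f)/\rho<\eps$, then choose $\beta\in(f(x^k)-\inf_{\Omega\cap B_\rho(x^k)}f,\,\eps\rho)$ and invoke the core with this small $\rho$, obtaining $(a_1,a_2)\in\eps\rho\B$ as in Definition~\ref{D3.1}\,\eqref{D3.1.2}.

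Part \eqref{P4.2.3} repeats the pattern with a moving base point. Here the hypothesis supplies, for every $\eps>0$, integers $k>\eps\iv$, radii $\rho\in(0,\eps)$ and points $u^k\in\Omega$ with $\|u^k-x^k\|<\eps$, $|f(u^k)-\mu_0|<\eps$ and $(f(u^k)-\inf_{\Omega\cap B_\rho(u^k)}f)/\rho<\eps$. I would set $w_1:=(u^k,f(u^k))$ and $w_2:=(u^k,\mu_0)$; the two smallness estimates place both base points in $\widehat\Omega\cap B_\eps((x^k,\mu_0))$, and the core applied at $u:=u^k$ with $\beta\in(f(u^k)-\inf_{\Omega\cap B_\rho(u^k)}f,\,\eps\rho)$ delivers the emptiness required by Definition~\ref{D3.1}\,\eqref{D3.1.3} for the single sequence $\{(x^k,\mu_0)\}$.

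The calculations are genuinely routine; the only delicate point, and the main obstacle, is reading the one-sided $\limsup$ conditions correctly. Because each difference quotient $(\inf_{\Omega\cap B_\rho(u)}f-f(u))/\rho$ is $\le0$, the statement ``$\limsup=0$'' does not merely bound it from above (which is automatic) but forces it to approach $0$ along arbitrarily large $k$ and arbitrarily small $\rho$ (and, in part \eqref{P4.2.3}, along admissible moving base points $u^k$). Extracting precisely this strict ``$<\eps$'' selection is what converts the stationarity hypotheses into the perturbation $\beta$ needed to trigger the geometric core, and it is the step I would write out most carefully.
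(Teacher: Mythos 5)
Your proposal is correct and follows essentially the same route as the paper's proof: in each part the stationarity hypothesis is unpacked into a strict defect bound $f(u)-\inf_{\Omega\cap B_\rho(u)}f<\eps$ (resp. $<\eps\rho$), and the emptiness of $(\Omega_1-w_1-a_1)\cap(\Omega_2-w_2-a_2)\cap(\rho\B)$ is verified by direct unwinding of the epigraph and level-set memberships, exactly as in the paper. The only cosmetic differences are that you shift by an asymmetric perturbation $a_1=(0,0)$, $a_2=(0,\beta)$ where the paper uses the symmetric pair $a_1=(0,-\al)$, $a_2=(0,\al)$ (or $(0,\mp\al\rho)$), and that you factor the common computation into a single core lemma instead of repeating it inline three times.
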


\begin{proof}
\begin{enumerate}
\item
Let a sequence $\{x^k\}\subset\Omega$ be firmly $\inf$-stationary
for problem \eqref{P} at level $\mu_0$, i.e., $f(x^k)\to\mu_0$ as $k\to+\infty$, and there is a $\rho\in(0,+\infty]$ such that condition \eqref{D4.3-1} is satisfied.
Then
\begin{gather}
\label{P4.2P1}
\lim_{k\to+\infty}\diam\{(x^k,f(x^k)),(x^k,\mu_0)\} =\lim_{k\to+\infty}|f(x^k)-\mu_0|=0,
\\
\label{P4.2P2}
\limsup_{k\to+\infty}\Big(\inf_{\Omega\cap B_\rho(x^k)}f-f(x^k)\Big)= \limsup_{k\to+\infty}\;\inf_{\Omega\cap B_\rho(x^k)}f-\mu_0=0.
\end{gather}
Let $\eps>0$.
By \eqref{P4.2P2}, there exists an integer $k>\eps\iv$ such that $\inf_{\Omega\cap B_\rho(x^k)}f-f(x^k)>-2\eps$.
Choose an $\al\in(0,\eps)$ so that
\begin{gather}
\label{P4.2P3}
\inf_{\Omega\cap B_\rho(x^k)}f-f(x^k)>-2\al.
\end{gather}
Set $a_1:=(0,-\al)$ and $a_2:=(0,\al)$.
Thus, $\vertiii{(a_1,a_2)}=\al<\eps$ and, thanks to \eqref{P4.2P3},
\begin{align*}
&(\Omega_1-(x^k,f(x^k))-a_1)\cap (\Omega_2-(x^k,\mu_0)-a_2)\cap (\rho\B_{X\times\R})
\\
=&\{(u-x^k,\mu)\mid u\in \Omega\cap B_\rho(x^k),\; \mu\ge f(u)-f(x^k)+\al,\; -\rho<\mu\le-\al\}=\es,
\end{align*}
i.e., $\{\Omega_1,\Omega_2\}$ is extremal at sequences $\{(x^k,f(x^k))\}\subset\Omega_1$ and $\{(x^k,\mu_0)\}\subset\Omega_2$.
\item
Let a sequence $\{x^k\}\subset\Omega$ be $\inf$-stationary
for problem \eqref{P} at level $\mu_0$, i.e., $f(x^k)\to\mu_0$ as $k\to+\infty$, and
condition \eqref{D4.1-2} is satisfied.
Then, condition \eqref{P4.2P1} holds true.
Let $\eps>0$.
By \eqref{D4.1-2}, there exist an integer $k>\eps\iv$, and a $\rho\in(0,\eps)$ such that $\inf_{\Omega\cap B_\rho(x^k)}f-f(x^k)> -2\eps\rho$.
Choose an $\al\in(0,\eps)$ so that
\begin{gather}
\label{P4.2P5}
\inf_{\Omega\cap B_\rho(x^k)}f-f(x^k)>-2\al\rho.
\end{gather}
Set $a_1:=(0,-\al\rho)$ and $a_2:=(0,\al\rho)$.
Thus, $\vertiii{(a_1,a_2)}=\al\rho<\eps\rho$ and, thanks to \eqref{P4.2P5},
\begin{align*}
&(\Omega_1-(x^k,f(x^k))-a_1)\cap (\Omega_2-(x^k,\mu_0)-a_2)\cap (\rho\B_{X\times\R})
\\
=&\{(u-x^k,\mu)\mid u\in \Omega\cap B_\rho(x^k),\; \mu\ge f(u)-f(x^k)+\al\rho,\; -\rho<\mu\le-\al\rho\}=\es,
\end{align*}
i.e., $\{\Omega_1,\Omega_2\}$ is stationary at sequences $\{(x^k,f(x^k))\}\subset\Omega_1$ and $\{(x^k,\mu_0)\}\subset\Omega_2$.
		
\item
Let a sequence $\{x^k\}\subset X$ be approximately $\inf$-stationary
for problem \eqref{P} at level $\mu_0$, i.e., condition \eqref{D4.1-3} is satisfied.
Let $\eps>0$.
By \eqref{D4.1-3}, there exist an integer $k>\eps\iv$, a $\rho\in(0,\eps)$ and $x\in\Omega$ such that $\|x-x^k\|<\eps$, $|f(x)-\mu_0|<\eps$, and $\inf_{\Omega\cap B_\rho(x)}f-f(x)>-2\eps\rho$.
Then $\vertiii{(x,f(x))-(x^k,\mu_0)}<\eps$ and $\vertiii{(x,\mu_0)-(x^k,\mu_0)}<\eps$.
Choose an $\al\in(0,\eps)$ so that
\begin{gather}
\label{P4.2P7}
\inf_{\Omega\cap B_\rho(x)}f-f(x)>-2\al\rho.
\end{gather}
Set $a_1:=(0,-\al\rho)$ and $a_2:=(0,\al\rho)$.
Thus, $\vertiii{(a_1,a_2)}=\al\rho<\eps\rho$ and, thanks to \eqref{P4.2P7},
\begin{align*}
&(\Omega_1-(x,f(x))-a_1)\cap (\Omega_2-(x,\mu_0)-a_2)\cap (\rho\B_{X\times\R})
\\
=&\{(u-x,\mu)\mid u\in \Omega\cap B_\rho(x),\; \mu\ge f(u)-f(x)+\al\rho,\; -\rho<\mu\le-\al\rho\}=\es,
\end{align*}
i.e., $\{\Omega_1,\Omega_2\}$ is approximately stationary at the sequence $\{(x^k,\mu_0)\}$.
\end{enumerate}
\end{proof}

In view of Remark~\ref{R4}\,\eqref{R4.1},
the next statement is a consequence of the sequential extremal principle in {Corollary~\ref{C3.10}} applied to the pair of sets $\{\Omega_1,\Omega_2\}$ given by \eqref{Om}.

\begin{theorem}
[Sequential necessary conditions]
\label{T4.6}
Let $X$ be Banach, $f:X\to\R_\infty$ be \lsc, and $\Omega\subset X$ be closed.
Suppose that $\{x^k\}\subset X$ is a minimizing sequence for problem \eqref{P} at level $\mu_0\in\R$.
The following assertions hold true:
\begin{enumerate}
\item
\label{T4.6.1}
for any $\varepsilon>0$, there exist an integer $k>\eps\iv$, and points
$(x_1,\mu_1)\in\epi f\cap B_\eps(x^k,\mu_0)$, $x_2\in\Omega\cap B_\eps(x^k)$, $(x_1^*,\nu_1)\in N^C_{\epi f}(x_1,\mu_1)$ and $x_2^*\in N^C_{\Omega}(x_2)$ such that
\begin{gather*}
\|x^*_1+x^*_2\|<\varepsilon
\;\;
\text{and}
\;\;
\|x^*_1\|+\|x^*_2\|+|\nu_1|=1.
\end{gather*}

\item
\label{T4.6.2}
If $X$ is Asplund, then $N^C$ in \eqref{T4.6.1} can be replaced by $N^F$.
\end{enumerate}
\end{theorem}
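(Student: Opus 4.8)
The plan is to realise Theorem~\ref{T4.6} as a direct application of the sequential extremal principle to the pair of sets in \eqref{Om}, followed by a rescaling of the dual vectors. First I would invoke Remark~\ref{R4}\,\eqref{R4.1}: a sequence that is minimizing for \eqref{P} at level $\mu_0$ (Definition~\ref{D4.1}\,\eqref{D4.1.2}) is in particular firmly $\inf$-stationary at level $\mu_0$. Proposition~\ref{P4.2}\,\eqref{P4.2.1} then turns this into extremality of the collection $\{\Omega_1,\Omega_2\}$, with $\Omega_1=\epi f$ and $\Omega_2=\Omega\times(-\infty,\mu_0]$, at the sequences $\{(x^k,f(x^k))\}\subset\Omega_1$ and $\{(x^k,\mu_0)\}\subset\Omega_2$. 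Throughout, $X\times\R$ and $(X\times\R)^2$ carry the maximum norms, so the relevant dual norms are the corresponding sum norms.

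Next I would apply Corollary~\ref{C3.10}\,\eqref{C3.10.2} (respectively \eqref{C3.10.3} in the Asplund case) to this collection, but with a tolerance $\eps_0\in(0,\eps)$ to be fixed at the end rather than $\eps$ itself. This produces an integer $k>\eps_0\iv$, points $(x_1,\mu_1)\in\epi f$ and $(x_2,\beta_2)\in\Omega_2$ lying in the $\eps_0$-balls about $(x^k,f(x^k))$ and $(x^k,\mu_0)$, respectively, and a normal $\hat x^*\in N^C_{\widehat\Omega}(\hat x)$ with $\vertiii{\hat x^*}=1$ and $\big\|\sum_i x_i^*\big\|<\eps_0$ in the dual of $X\times\R$. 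Since the normal cone to a product of sets (in the maximum norm) is the product of the normal cones, I would write the first component as $(x_1^*,\nu_1)\in N^C_{\epi f}(x_1,\mu_1)$ and use $N^C_{\Omega_2}(x_2,\beta_2)=N^C_\Omega(x_2)\times N_{(-\infty,\mu_0]}(\beta_2)$ to write the second as $(x_2^*,\nu_2)$ with $x_2^*\in N^C_\Omega(x_2)$. Spelling out the two dual sum norms gives $\|x_1^*\|+|\nu_1|+\|x_2^*\|+|\nu_2|=1$ and $\|x_1^*+x_2^*\|+|\nu_1+\nu_2|<\eps_0$; in particular $\|x_1^*+x_2^*\|<\eps_0$ and $|\nu_1+\nu_2|<\eps_0$.

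The main obstacle is the normalization: the extremal principle delivers the symmetric normalization above, which carries the scalar $\nu_2$ coming from the halfline factor of $\Omega_2$, whereas Theorem~\ref{T4.6} asks for $\|x_1^*\|+\|x_2^*\|+|\nu_1|=1$, with $\nu_2$ absent. One cannot expect $\nu_2=0$: in the typical ``vertical'' separation of an epigraph from a sublevel region the halfline constraint is active and $\nu_2\ne0$. The remedy is to rescale. Set $s:=\|x_1^*\|+\|x_2^*\|+|\nu_1|=1-|\nu_2|$. From $|\nu_2|\le|\nu_1+\nu_2|+|\nu_1|<\eps_0+|\nu_1|$ and $|\nu_1|\le s$ one gets $|\nu_2|<1-|\nu_2|+\eps_0$, hence $|\nu_2|<(1+\eps_0)/2$ and $s>(1-\eps_0)/2>0$. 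Replacing $(x_1^*,\nu_1)$ and $x_2^*$ by their quotients by $s$ (legitimate since all the cones involved are cones) yields $\|x_1^*\|+\|x_2^*\|+|\nu_1|=1$ and $\|x_1^*+x_2^*\|<\eps_0/s<2\eps_0/(1-\eps_0)$.

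It remains to pin down $\eps_0$. Choosing $\eps_0\le\min\{\eps/4,1/2\}$ makes $2\eps_0/(1-\eps_0)\le\eps$, so $\|x_1^*+x_2^*\|<\eps$. Since $f(x^k)\to\mu_0$, there is a $K_0$ with $|f(x^k)-\mu_0|<\eps/2$ for $k>K_0$; shrinking $\eps_0$ further so that $\eps_0\iv>K_0$ guarantees that the $k>\eps_0\iv$ produced above satisfies $|f(x^k)-\mu_0|<\eps/2<\eps-\eps_0$, whence $(x_1,\mu_1)\in\epi f\cap B_\eps(x^k,\mu_0)$, while $(x_2,\beta_2)\in\Omega_2\cap B_{\eps_0}(x^k,\mu_0)$ gives $x_2\in\Omega\cap B_\eps(x^k)$ and $k>\eps_0\iv>\eps\iv$. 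This delivers assertion~\eqref{T4.6.1}. For assertion~\eqref{T4.6.2} the same argument runs verbatim using Corollary~\ref{C3.10}\,\eqref{C3.10.3}, with $N^C$ replaced by $N^F$.
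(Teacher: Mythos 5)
Your proposal is correct and takes essentially the same route as the paper's proof: reduce via Remark~\ref{R4}\,\eqref{R4.1} and Proposition~\ref{P4.2}\,\eqref{P4.2.1} to extremality of $\{\epi f,\Omega\times(-\infty,\mu_0]\}$ at the stated sequences, apply Corollary~\ref{C3.10}\,\eqref{C3.10.2} (resp.\ \eqref{C3.10.3}) with an auxiliary tolerance, decompose the dual sum norms, and renormalize by $s=\|x_1^*\|+\|x_2^*\|+|\nu_1|$ to drop $\nu_2$, your bound $s>(1-\eps_0)/2$ being exactly the paper's estimate $2(\|x_1^*\|+\|x_2^*\|+|\nu_1|)>1-\eps'$. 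The only differences are cosmetic: the paper fixes $\eps':=\eps/(2+\eps)$ where you take $\eps_0\le\min\{\eps/4,1/2\}$, and you make explicit (by shrinking $\eps_0$ so that $\eps_0\iv>K_0$) the step ensuring $|f(x^k)-\mu_0|$ is small for the produced $k$, which the paper compresses into ``taking into account that $f(x^k)\to\mu_0$''.
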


\begin{proof}
By Remark~\ref{R4}\,\eqref{R4.1}, $\{x^k\}$ is firmly $\inf$-stationary for problem \eqref{P} at level $\mu_0$.
By the assumptions, the sets $\Omega_1$ and $\Omega_2$ given by \eqref{Om} are closed.
By Proposition~\ref{P4.2}\,\eqref{P4.2.1}, $\{\Omega_1,\Omega_2\}$ is extremal at $\{(x^k,f(x^k))\}\subset\Omega_1$ and $\{(x^k,\mu_0)\}\subset\Omega_2$.
\if{
\NDC{18/2/25.
I think the correct statement should be  `extremal at $\{(x^k,f(x^k))\}\subset\Omega_1$ and $\{(x^k,\mu_0)\}\subset\Omega_2$'.
However, I think we should use Corollary \ref{C3.8} (for approximately $\inf$-stationary) instead.
In fact, if we use Corollary~\ref{C3.10}, additional estimates will be required, as we are working with the sequence
$(x^k,f(x^k))$ where $f(x^k)\to\mu_0$, but $f(x^k)$
does not appear in the conclusion.
}
}\fi

Let $\eps>0$.
Set $\eps':=\frac{\eps}{2+\eps}$ and observe that $\eps'\in(0,1)$ and $\eps'<\eps$.
By Corollary~\ref{C3.10}\,\eqref{C3.10.2} and taking into account that $f(x^k)\to\mu_0$, there exist an integer $k>(\eps')\iv$, and points
$(x_1,\mu_1)\in\epi f\cap B_{\eps'}(x^k,f(x^k))$, $(x_2,\mu_2)\in\Omega_2\cap B_{\eps'}(x^k,\mu_0)$, $(x_1^*,\nu_1)\in N^C_{\epi f}(x_1,\mu_1)$, and $(x_2^*,\nu_2)\in N^C_{\Omega_2}(x_2,\mu_2)$ such that $|f(x^k)-\mu_0|<\eps-\eps'$,
\begin{gather*}
\|x^*_1+x^*_2\|+|\nu_1+\nu_2|<\varepsilon'
\;\;
\text{and}
\;\;
\|x^*_1\|+\|x^*_2\|+|\nu_1|+|\nu_2|=1.
\end{gather*}
Then $(x_1,\mu_1)\in\epi f\cap B_\eps(x^k,\mu_0)$, $x_2\in\Omega\cap B_\eps(x^k)$, $x_2^*\in N^C_{\Omega}(x_2)$ and
\begin{gather*}
2(\|x^*_1\|+\|x^*_2\|+|\nu_1|)\ge \|x^*_1\|+\|x^*_2\|+2|\nu_1|=1+|\nu_1|-|\nu_2|\ge 1-|\nu_1+\nu_2|>1-\eps'>0.
\end{gather*}
Scaling the vectors
$(x_1^*,\nu_1)\in N^C_{\epi f}(x_1,\mu_1)$ and $x_2^*\in N^C_{\Omega}(x_2)$, one can ensure that (keeping the original notations) $\|x^*_1\|+\|x^*_2\|+|\nu_1|=1$ and $\|x^*_1+x^*_2\|<\frac{2\eps'}{1-\eps'}=\eps$.

If $X$ is Asplund, then $N^C$ in the above arguments can be replaced by $N^F$.
\end{proof}	

\begin{corollary}
\label{C4.7}
Under the assumptions of Theorem~\ref{T4.6}, one of the following assertions holds true:
\begin{enumerate}
\item
\label{C4.7.1}
there is an $M>0$ such that,
for any $\varepsilon>0$, there exist an integer $k>\eps\iv$, and points $x_1\in B_\eps(x^k)$, $x_2\in\Omega\cap B_\eps(x^k)$ such that
{$|f(x_1)-\mu_0|<\eps$},
and
\begin{gather}
\label{C4.7-1}
0\in\sd^{C}f(x_1)+N^C_{\Omega}(x_2)\cap (M\B_{X^*})	+\varepsilon\B_{X^*};
\end{gather}	
\item
\label{C4.7.2}
for any $\varepsilon>0$, there exist an integer $k>\eps\iv$, and points
$(x_1,\mu_1)\in\epi f\cap B_\eps(x^k,\mu_0)$, $x_2\in\Omega\cap B_\eps(x^k)$, $(x_1^*,\nu_1)\in N^C_{\epi f}(x_1,\mu_1)$ and $x_2^*\in N^C_{\Omega}(x_2)$ such that $-\eps<\nu_1\le0$
and
\begin{gather*}
\|x^*_1+x^*_2\|<\varepsilon,\;\;
\|x^*_1\|+\|x^*_2\|=1.	
\end{gather*}	
\end{enumerate}

If $X$ is Asplund, then $N^C$ and $\sd^{C}$ in the above assertions can be replaced by $N^F$ and $\sd^{F}$, respectively.
\end{corollary}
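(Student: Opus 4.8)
The plan is to read off both alternatives from the \emph{single} conclusion of Theorem~\ref{T4.6} by tracking the size of the multiplier $\nu_1$ attached to the epigraphical normal. Two elementary properties of the Clarke normal cone to an epigraph underlie everything: for any $(x_1^*,\nu_1)\in N^C_{\epi f}(x_1,\mu_1)$ one has $\nu_1\le0$; and in the nondegenerate case $\nu_1<0$ the base point lies on the graph, so $\mu_1=f(x_1)$, and writing $(x_1^*,\nu_1)=|\nu_1|\,(-x_1^*/\nu_1,-1)$ and using definition \eqref{SC} gives $-x_1^*/\nu_1\in\partial^Cf(x_1)$. To organise the dichotomy, for each $t>0$ I would denote by $D(t)$ the set of tuples $(k,x_1,\mu_1,x_2,x_1^*,\nu_1,x_2^*)$ realising the conclusion of Theorem~\ref{T4.6}\,\eqref{T4.6.1} with $\eps:=t$; by that theorem $D(t)\neq\es$. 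Shrinking $t$ only strengthens every constraint ($k>t\iv$, the balls $B_t$, and the strict inequalities), so the family is nested: $D(t_1)\subseteq D(t_2)$ whenever $t_1<t_2$. Hence $\nu^*(t):=\sup\{|\nu_1|\mid\text{tuple}\in D(t)\}\in[0,1]$ is nondecreasing, and $\lambda:=\lim_{t\downarrow0}\nu^*(t)=\inf_{t>0}\nu^*(t)\in[0,1]$ is well defined. The proof splits according to whether $\lambda>0$ or $\lambda=0$.

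If $\lambda>0$, set $\delta:=\lambda/2>0$ and $M:=1/\delta$. Given a target $\eps>0$, I would apply the definition of $\nu^*$ at $t:=\eps\delta\le\eps$ to select a tuple in $D(\eps\delta)$ with $|\nu_1|>\delta$; then $\nu_1<0$, so $\mu_1=f(x_1)$ and $-x_1^*/\nu_1\in\partial^Cf(x_1)$, while $x_2^*/|\nu_1|\in N^C_\Omega(x_2)$ has norm $\le\norm{x_2^*}/|\nu_1|<1/\delta=M$ (using $\norm{x_2^*}\le1$). Dividing the relation $\norm{x_1^*+x_2^*}<\eps\delta$ by $|\nu_1|>\delta$ yields $0\in\partial^Cf(x_1)+N^C_\Omega(x_2)\cap(M\B_{X^*})+\eps\B_{X^*}$, while $|f(x_1)-\mu_0|=|\mu_1-\mu_0|<\eps\delta<\eps$; together with $x_1\in B_\eps(x^k)$ and $x_2\in\Omega\cap B_\eps(x^k)$ this is exactly assertion \eqref{C4.7.1}.

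If $\lambda=0$, then $\nu^*(t)\to0$, so for small $t$ \emph{every} tuple in $D(t)$ has $|\nu_1|\le\nu^*(t)$ arbitrarily small, whence $s:=\norm{x_1^*}+\norm{x_2^*}=1-|\nu_1|\ge1-\nu^*(t)>0$. Given a target $\eps>0$, I would pick $t<\eps$ so small that $\nu^*(t)/(1-\nu^*(t))<\eps$ and $t/(1-\nu^*(t))<\eps$, take any tuple in $D(t)$, and rescale the normals by $1/s$. As $N^C_{\epi f}$ and $N^C_\Omega$ are cones, $(x_1^*/s,\nu_1/s)\in N^C_{\epi f}(x_1,\mu_1)$ and $x_2^*/s\in N^C_\Omega(x_2)$, and the rescaled data satisfy $\norm{x_1^*}+\norm{x_2^*}=1$, $\norm{x_1^*+x_2^*}<\eps$, and $-\eps<\nu_1\le0$, which is assertion \eqref{C4.7.2}. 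In both branches, when $X$ is Asplund one repeats the argument starting from the $N^F$/$\partial^F$ version supplied by Theorem~\ref{T4.6}\,\eqref{T4.6.2}, and the computations go through verbatim.

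The main obstacle I anticipate is the passage from the epigraphical normal to the subdifferential when $\nu_1\neq0$: one must justify that such a normal is automatically based at a graph point, so that $-x_1^*/\nu_1\in\partial^Cf(x_1)$ is meaningful and $|f(x_1)-\mu_0|<\eps$ holds. For the \Fr\ normal cone this is immediate, since moving slightly downward from a point strictly above the graph stays in $\epi f$ and forces $\nu_1=0$; for the Clarke cone one appeals to the standard relationship between $N^C_{\epi f}$ and $\partial^Cf$ for \lsc\ $f$. The secondary, purely bookkeeping difficulty is securing the ``for every $\eps>0$'' quantifier in each alternative, which is what the monotonicity of $D(t)$ and the resulting well-definedness of $\lambda$ are designed to handle.
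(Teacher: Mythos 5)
Your proposal is correct and takes essentially the same route as the paper's proof: the paper applies Theorem~\ref{T4.6} with $\eps=1/j$ to generate a sequence of tuples and splits on whether $\limsup_{j\to+\infty}|\nu_{1j}|>0$ (rescaling by $|\nu_{1j}|$ to obtain \eqref{C4.7.1} with $M=1/\al$) or $\lim_{j\to+\infty}|\nu_{1j}|=0$ (rescaling by $\ga:=\|x_{1j}^*\|+\|x_{2j}^*\|$ to obtain \eqref{C4.7.2}), which is precisely your $\lambda>0$ versus $\lambda=0$ dichotomy, with your nested family $D(t)$ and monotone $\nu^*(t)$ serving merely as alternative bookkeeping for the paper's single sequence of tuples. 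The step you flag as the main obstacle --- that $\nu_1<0$ forces $\mu_1=f(x_1)$ and $x_1^*/|\nu_1|\in\sd^Cf(x_1)$ --- is asserted in the paper's proof in exactly the same unelaborated way, so your treatment matches the paper's on that point as well.
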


\begin{proof}
By Theorem~\ref{T4.6}, for any $j\in\N$, there exist an integer $k>j$, and points $(x_{1j},\mu_{1j})\in\epi f\cap B_{1/j}(x^k,\mu_0)$, $x_{2j}\in\Omega\cap B_{1/j}(x^k)$, $(x_{1j}^*,\nu_{1j})\in N^C_{\epi f}(x_{1j},\mu_{1j})$ and $x_{2j}^*\in N^C_\Omega(x_{2j})$
such that
$$\|x^*_{1j}+x^*_{2j}\|<1/j, \;\; \|x^*_{1j}\|+\|x^*_{2j}\|+|\nu_{1j}|=1.$$
Note that $\nu_{1j}\le0$ for all $j\in\N$.
We consider two cases.\\
\emph{Case 1.}
$\limsup_{j\to+\infty}|\nu_{1j}|>\al>0$.
Note that $\al<1$.
Set $M:=1/\al$.
Let $\eps>0$.
Choose a number $j\in\N$ so that
$j>(\al\eps)\iv$
and $|\nu_{1j}|>\al$.
Then $\mu_{1j}=f(x_{1j})$ and $x^*_{1j}/|\nu_{1j}|\in\sd^{C}f(x_{1j})$.
Note that $j>\varepsilon\iv$.
Set $x_1:=x_{1j}$, $x_2:=x_{2j}$,
$x_1^*:=x_{1j}^*/|\nu_{1j}|$, and
$x_2^*:=x_{2j}^*/|\nu_{1j}|$.
Then ${x_1\in B_\eps(x^k)}$, $x_2\in\Omega\cap B_\eps(x^k)$, $|f(x_1)-\mu_0|<\eps$,
$x_1^*\in\sd^{C}f(x_{1})$, $x_{2}^*\in N^C_\Omega(x_{2})$, $\|x_2^*\|<1/\al=M$,
and $\|x_1^*+x_2^*\|=\|x_{1j}^*+x_{2j}^*\|/|\nu_{1j}|<1/(\al j)<\eps$.
Hence, condition \eqref{C4.7-1} is satisfied.
\smallskip\\
\emph{Case 2.}
$\lim_{j\to+\infty}|\nu_{1j}|=0$.
Then $x_{1j}^*+x_{2j}^*\to0$ and $1\ge\|x_{1j}^*\|+\|x_{2j}^*\|\to1$ as $j\to+\infty$.
Let $\eps>0$.
Choose a number $j\in\N$ so that $j>\varepsilon\iv$,
\begin{gather*}
\ga:=\|x_{1j}^*\|+\|x_{2j}^*\|>0,
\;\;
|\nu_{1j}|/\ga<\varepsilon, \;\;
\|x_{1j}^*+x_{2j}^*\|/\ga<\varepsilon.
\end{gather*}
Set $x_1:=x_{1j}$, $\mu_1:=\mu_{1j}$, $x_2:=x_{2j}$, $x_1^*:=x_{1j}^*/\ga$, $x_2^*:=x_{2j}^*/\ga$, and $\nu_1:=\nu_{1j}/\ga$.
Then $(x_1,\mu_1)\in\epi f\cap B_\eps(x^k,\mu_0)$,
$x_2\in\Omega\cap B_\eps(x^k)$,
$(x_1^*,\nu_1)\in N^C_{\epi f}(x_1,\mu_1)$,
$x_2^*\in N^C_\Omega(x_{2})$, $-\eps<\nu_1\le0$,
${\|x_1^*\|+\|x_2^*\|}=1$,
and $\|x_1^*+x_2^*\|<\eps$.

If $X$ is Asplund, then
$N^C$ and $\sd^{C}$ in
the above arguments can be replaced by
$N^F$ and $\sd^{F}$, respectively.
\end{proof}

\begin{remark}
\begin{enumerate}
\item
The necessary conditions in Theorem~\ref{T4.6} and Corollary~\ref{C4.7} are applicable to any type of stationary sequences in Definition~\ref{D4.3}.
Moreover, the generalized separation Theorem~\ref{T4.3}, whose Corollary~\ref{C3.10} is the core tool in the proof of Theorem~\ref{T4.6}, allows one to derive necessary conditions for ``almost minimizing'' sequences.
\item
Part \eqref{C4.7.1} of Corollary~\ref{C4.7} gives a kind of multiplier rule (in the normal form), while part \eqref{C4.7.2} corresponds to `singular' behaviour of $f$ on $\Omega$ with the normal vector $(x_1^*,\nu_1)$ to the epigraph of $f$ being ``almost horizontal''.
If $\mu_1>f(x_1)$ in part \eqref{C4.7.2}, then $\nu_1=0$ and $x_1^*$ is normal to $\dom f$ at $x_1$.
\end{enumerate}
\end{remark}

The next qualification condition excludes the singular behavior in Corollary~\ref{C4.7}\,\eqref{C4.7.2}.

\begin{enumerate}
\item [ ]
\begin{enumerate}
\item [$(QC)_C$]
there is an $\varepsilon>0$ such that
$\|x_1^*+x_2^*\|\ge\eps$
for all integers $k>\eps\iv$, and points $(x_1,\mu_1)\in\epi f\cap B_\eps(x^k,\mu_0)$, $x_2\in\Omega\cap B_\eps(x^k)$, $(x_1^*,\nu_1)\in N^C_{\epi f}(x_1,\mu_1)$ and $x_2^*\in N^C_{\Omega}(x_2)$ such that $-\eps<\nu_1\le0$ and $\|x_1^*\|+\|x_2^*\|=1$.
\end{enumerate}
\end{enumerate}

We denote by $(QC)_F$ the analogue of $(QC)_C$ with $N^F$ and $\sd^{F}$ in place of $N^C$ and $\sd^{C}$, respectively.
Clearly, $(QC)_C$\ \folgt\ $(QC)_F$.
\if{
Thanks to the representation of the Clarke normal cone in \cite[Theorem~5.2.18]{BorZhu05},
if $\dim X<+\infty$, the conditions are equivalent.

\AK{16/02/25.
Can anything be said in infinite dimensions?}
\NDC{17/2/25.
I think that not much can be said in infinite dimensions beyond convexity or regularity assumptions, under which the Clarke normal cone and the Fréchet normal cone coincide at the tail of sequences.
}
{
\begin{lemma}
Let $X$ be a vector space,
vectors $x$ and $y$ are convex combination of vectors $x_1,\ldots,x_n$ and $y_1,\ldots,y_m$, respectively, and $ \varepsilon>0$.
Then $x+y$ is an element of the set ${\rm{conv}}\{x_i+y_j\mid i=1,\ldots,n\;\;\text{and}\;\;j=1,\ldots,m\}$.
\end{lemma}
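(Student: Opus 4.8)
The plan is to build convex coefficients for the vectors $x_i+y_j$ out of the \emph{product} of the coefficients used for $x$ and for $y$. First I would fix representations
\[
x=\sum_{i=1}^n\la_i x_i,\qquad y=\sum_{j=1}^m\mu_j y_j,
\]
where $\la_i\ge0$, $\mu_j\ge0$, $\sum_{i=1}^n\la_i=1$ and $\sum_{j=1}^m\mu_j=1$; these exist precisely because $x$ and $y$ are assumed to be convex combinations of the respective families. I would note at the outset that the number $\eps>0$ appearing in the statement plays no role in this conclusion and can be ignored.

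Next I would introduce the coefficients $\ga_{ij}:=\la_i\mu_j$ indexed by the pairs $(i,j)$ with $i\in\{1,\ldots,n\}$ and $j\in\{1,\ldots,m\}$. Each $\ga_{ij}\ge0$, and by distributivity of the product of two finite sums,
\[
\sum_{i=1}^n\sum_{j=1}^m\ga_{ij} =\Big(\sum_{i=1}^n\la_i\Big)\Big(\sum_{j=1}^m\mu_j\Big)=1,
\]
so $\{\ga_{ij}\}$ is a legitimate family of convex weights attached to the vectors $\{x_i+y_j\}$.

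The key step is then the direct algebraic identity
\[
\sum_{i=1}^n\sum_{j=1}^m\la_i\mu_j(x_i+y_j) =\sum_{i=1}^n\la_i x_i\Big(\sum_{j=1}^m\mu_j\Big) +\sum_{j=1}^m\mu_j y_j\Big(\sum_{i=1}^n\la_i\Big) =x+y,
\]
where the two inner sums each equal $1$. This exhibits $x+y$ as a convex combination of the vectors $x_i+y_j$, and hence $x+y\in{\rm conv}\{x_i+y_j\mid i=1,\ldots,n,\;j=1,\ldots,m\}$, as required.

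I do not anticipate any genuine obstacle here: the argument is purely algebraic, relying only on the nonnegativity of the weights and the factorisation of the double sum into a product of the two single sums. The one point worth flagging is that the hypothesis involving $\eps$ is superfluous for the stated conclusion, which suggests it is either a leftover from a different (approximate) formulation or is invoked only at the point where the lemma is applied.
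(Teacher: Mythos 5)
Your proof is correct and follows essentially the same route as the paper's own argument: both exhibit $x+y$ as the convex combination $\sum_{i=1}^{n}\sum_{j=1}^{m}\la_i\mu_j(x_i+y_j)$ using the product weights $\la_i\mu_j$, which sum to $1$ by factoring the double sum. Your observation that $\varepsilon$ is superfluous is also consistent with the paper, whose proof never invokes it.
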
	
\begin{proof}
Under the assumption made, there exist $\la_1\ldots,\la_n\ge 0$ and $\mu_1,\ldots,\mu_m\ge 0$ such that $x=\sum_{i=1}^{n}\la_ix_i$, $y=\sum_{j=1}^{m}\mu_j y_j$, and $\sum_{i=1}^{n}\la_i=\sum_{j=1}^{m}\mu_j=1$.
Then
\begin{align*}
x+y
&=\left(\sum_{i=1}^{n}\la_ix_i\right)\cdot\left(\sum_{j=1}^{m}\mu_j\right)+\left(\sum_{i=1}^{n}\la_i\right)\cdot\left(\sum_{j=1}^{m}\mu_j y_j\right)\\
&=\sum_{i=1}^{n}\sum_{j=1}^{m}\la_i\mu_j x_i+\sum_{i=1}^{n}\sum_{j=1}^{m}\la_i\mu_i x_iy_j\\
&=\sum_{i=1}^{n}\sum_{j=1}^{m}\la_i\mu_j(x_i+v_j).
\end{align*}	
Note that $\sum_{i=1}^{n}\sum_{j=1}^{m}\la_i\mu_j=1$.
The proof is complete.
\end{proof}	
}
{
The next statement is a consequence of \cite[Theorem~5.2.18]{BorZhu05}.
\begin{lemma}
Let $\dim X<+\infty$, $\Omega\subset X$, and $\bx\in\Omega$.
Then
\begin{align*}
N^C_{\Omega}(\bx)={\rm{\overline{conv}}}\{x^*\in X^*\mid \exists \{x^j\}\subset\Omega,&\;x^{*j}\in N^F_{\Omega}(x^j)\;\;\text{for all}\;\; j\in\N,\\
&\text{such that}\;\;
x^j\to\bx\;\text{and}\; x^{*j}\rightarrow x^*\;\text{as}\;j\to+\infty\}.
\end{align*}	
\end{lemma}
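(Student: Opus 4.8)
The plan is to identify the set under the closed convex hull with the \emph{limiting (Mordukhovich) normal cone} and then quote the cited convexification theorem. Write
$$L:=\{x^*\in X^*\mid \exists\,\{x^j\}\subset\Omega,\ x^{*j}\in N^F_\Omega(x^j)\ (j\in\N),\ x^j\to\bx,\ x^{*j}\to x^*\}.$$
Because $\dim X<+\infty$, weak$^*$ and norm convergence in $X^*$ agree, so $L$ is exactly the outer limit of the \Fr\ normal cones $N^F_\Omega(x)$ along sequences $x\to\bx$ in $\Omega$, i.e.\ the limiting normal cone $N_\Omega(\bx)$. Hence the assertion to be proved is the equality $N^C_\Omega(\bx)=\overline{\mathrm{conv}}\,L$, and I would prove it by establishing the two inclusions separately.

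For the inclusion $\overline{\mathrm{conv}}\,L\subseteq N^C_\Omega(\bx)$, recall from the text following \eqref{NCC} that $N^C_\Omega(\bx)$ is a closed convex cone; thus it suffices to show $L\subseteq N^C_\Omega(\bx)$. Given $x^*\in L$ with witnesses $x^j\to\bx$ and $x^{*j}\to x^*$, the inclusion $N^F_\Omega(x^j)\subset N^C_\Omega(x^j)$ places $x^{*j}$ in $N^C_\Omega(x^j)$; passing to the limit and using that the Clarke normal cone map $x\mapsto N^C_\Omega(x)$ has closed graph in finite dimensions (a consequence of the inner semicontinuity of the Clarke tangent cone and polarity) gives $x^*\in N^C_\Omega(\bx)$. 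Taking closed convex hulls then yields the inclusion.

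The reverse inclusion $N^C_\Omega(\bx)\subseteq\overline{\mathrm{conv}}\,L$ is the substantive part and is where \cite[Theorem~5.2.18]{BorZhu05} is used: in finite dimensions it represents the Clarke object as the closed convex hull of the corresponding limiting object, which, applied to the normal cone (equivalently, transferred from the Clarke and limiting subdifferentials of the indicator function of $\Omega$), gives $N^C_\Omega(\bx)\subseteq\overline{\mathrm{conv}}\,N_\Omega(\bx)=\overline{\mathrm{conv}}\,L$, completing the proof. I expect this inclusion to be the main obstacle: unlike the first one, showing that the Clarke normal cone is \emph{no larger} than $\overline{\mathrm{conv}}\,L$ cannot be obtained by elementary limiting arguments and rests on the finite-dimensional mechanism behind the reference (Rademacher's theorem and the gradient formula for the generalized gradient, together with the limiting calculus). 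The care needed is to check that the hypotheses of \cite[Theorem~5.2.18]{BorZhu05} — in particular local closedness of $\Omega$ around $\bx$ and finite dimensionality — are in force, and that its conclusion, possibly phrased for subdifferentials, transfers to normal cones via the indicator-function correspondence.
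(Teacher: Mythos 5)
Your proposal is correct and takes essentially the same route as the paper: the paper offers no argument beyond the single remark that the lemma is a consequence of \cite[Theorem~5.2.18]{BorZhu05}, which is precisely your reduction — recognize the set on the right-hand side as the limiting normal cone (the outer limit of \Fr\ normal cones, where norm and weak$^*$ convergence coincide since $\dim X<+\infty$) and invoke the finite-dimensional representation $N^C_{\Omega}(\bx)=\overline{\mathrm{conv}}\,N_{\Omega}(\bx)$. The only superfluous element is your separate proof of the inclusion $\overline{\mathrm{conv}}\,L\subseteq N^C_{\Omega}(\bx)$ via a claimed closed graph of $x\mapsto N^C_{\Omega}(x)$ — a property that is delicate (convexification can destroy outer semicontinuity, and inner semicontinuity of the Clarke tangent cone mapping is not the same as Rockafellar's formula expressing $T^C_{\Omega}(\bx)$ as a lower limit of \emph{contingent} cones) and in any case unnecessary, since the cited equality already yields both inclusions; your caveat about local closedness of $\Omega$ is well taken, as the lemma as stated omits this hypothesis while the cited theorem requires it.
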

}
{
\begin{proposition}
Suppose that $\dim X<+\infty$.
Then, $(QC)_F$ \folgt $(QC)_C$.
\end{proposition}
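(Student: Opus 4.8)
The plan is to argue by contraposition, converting a failure of $(QC)_C$ into a failure of $(QC)_F$ by resolving each Clarke normal into limiting \Fr\ normals. Since $N^F\subset N^C$, the reverse implication $(QC)_C$~\folgt~$(QC)_F$ is immediate, so only this direction exploits the assumption $\dim X<+\infty$. First I would negate $(QC)_C$: for every $m\in\N$ there exist an integer $k_m>m$, points $(x_1^m,\mu_1^m)\in\epi f\cap B_{1/m}(x^{k_m},\mu_0)$ and $x_2^m\in\Omega\cap B_{1/m}(x^{k_m})$, and Clarke normals $(x_1^{*m},\nu_1^m)\in N^C_{\epi f}(x_1^m,\mu_1^m)$, $x_2^{*m}\in N^C_\Omega(x_2^m)$ with $-1/m<\nu_1^m\le0$, $\|x_1^{*m}\|+\|x_2^{*m}\|=1$ and $\|x_1^{*m}+x_2^{*m}\|<1/m$.

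Next I would invoke the finite-dimensional representation $N^C_\Omega(\bar u)={\rm conv}\,N^L_\Omega(\bar u)$ (the preceding lemma, drawn from \cite[Theorem~5.2.18]{BorZhu05}), where $N^L$ denotes the limiting normal cone, i.e.\ the set of limits of \Fr\ normals at nearby points. By Carath\'eodory's theorem in $X^*\times\R$ and in $X^*$ I would write $(x_1^{*m},\nu_1^m)=\sum_l\la_l^m(y_l^{*m},\eta_l^m)$ and $x_2^{*m}=\sum_p\sigma_p^m z_p^{*m}$ as finite convex combinations of limiting normals $(y_l^{*m},\eta_l^m)\in N^L_{\epi f}(x_1^m,\mu_1^m)$ (with $\eta_l^m\le0$) and $z_p^{*m}\in N^L_\Omega(x_2^m)$, each of which is a limit of \Fr\ normals at base points that, for a large approximating index, lie inside the $\eps$-balls required by $(QC)_F$. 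Applying the preceding lemma on convex combinations, I would then express the small vector as a single convex combination of the pair-sums,
\[
x_1^{*m}+x_2^{*m}=\sum_{l,p}\la_l^m\sigma_p^m\bigl(y_l^{*m}+z_p^{*m}\bigr).
\]

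Because $\nu_1^m=\sum_l\la_l^m\eta_l^m\to0$ with all $\eta_l^m\le0$, the pieces inherit an ``almost horizontal'' character in the weighted sense, and in finite dimensions I would pass to subsequences so that the multipliers, the normalized directions and the constraint $\|x_1^{*m}\|+\|x_2^{*m}\|=1$ stabilize. The intended conclusion is that some pair $(y_l^{*m},z_p^{*m})$, via its approximating \Fr\ normals at nearby base points, furnishes a \Fr\ configuration satisfying the hypotheses of $(QC)_F$ yet with arbitrarily small $\|y_l^{*m}+z_p^{*m}\|$, contradicting $(QC)_F$ and completing the proof of $(QC)_F$~\folgt~$(QC)_C$.

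The \emph{main obstacle} will be exactly this last extraction. The displayed identity only bounds $\|x_1^{*m}+x_2^{*m}\|$ from \emph{above} by the pair-sums, which is the wrong direction: a convex combination $\sum_{l,p}\la_l^m\sigma_p^m(y_l^{*m}+z_p^{*m})$ can have vanishing norm while every individual $y_l^{*m}+z_p^{*m}$ stays bounded away from $0$, since cancellation is precisely what convexification of $N^L$ into $N^C$ permits. Controlling this cancellation is the crux; I expect it to require forcing the individual pieces to satisfy the $\nu$-constraint $|\eta_l^m|$ small (not merely on weighted average) together with a directional-consistency argument, and it is likely that the implication genuinely needs a convexity/regularity assumption under which $N^L=N^C$ and the decomposition step becomes vacuous. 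I would therefore either establish the statement under such regularity or seek an independent argument ruling out the cancellation by exploiting the epigraphical product structure of the sets $\Omega_1,\Omega_2$ in \eqref{Om}.
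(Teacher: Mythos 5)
You have not proved the proposition, and your diagnosis of why is exactly right --- moreover, it agrees with the paper itself, which contains no proof of this statement either. The proposition survives only in draft material that the authors commented out of the source: the proof fragment there breaks off after merely restating $(QC)_F$ and fixing arbitrary Clarke normals, and it is followed by an explicit author's note admitting failure to prove $(QC)_F$ \folgt\ $(QC)_C$ in finite dimensions. The route the authors attempted is the one you reconstruct: represent each Clarke normal through the finite-dimensional formula $N^C_{\Omega}(\bx)=\overline{{\rm conv}}\,N^L_{\Omega}(\bx)$ from \cite[Theorem~5.2.18]{BorZhu05}, where $N^L$ collects limits of \Fr\ normals at nearby points; merge the two convex combinations into a single one for the sum, $x_1^*+x_2^*=\sum_{l,p}\la_l\sigma_p\,(y_l^*+z_p^*)$ (their auxiliary lemma on convex combinations is precisely your displayed identity); and then try to transfer the lower bound from the \Fr\ pair-sums to $\|x_1^*+x_2^*\|$. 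They stop at the very obstruction you single out as the crux: convexification only bounds the combination from \emph{above}, and cancellation can annihilate $\sum_{l,p}\la_l\sigma_p\,(y_l^*+z_p^*)$ while every $\|y_l^*+z_p^*\|$ stays bounded below by $\eps$. The authors even record the same toy witness you allude to: $(0,0)$ is the midpoint of $(-1,0)$ and $(1,0)$, both of norm one.

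So there is a genuine gap, but it lies in the provability of the claim by this method, not in an oversight of yours; accordingly, the published text retains only the trivial implication $(QC)_C$ \folgt\ $(QC)_F$ (from $N^F\subset N^C$) and never asserts the converse. Your two secondary worries are also well placed: negating $(QC)_C$ controls $\nu_1=\sum_l\la_l\eta_l$ only on weighted average, so light-weight pieces may carry large $|\eta_l|$ and fail the $\nu$-constraint of $(QC)_F$, and the pieces must be renormalized to $\|y_l^*\|+\|z_p^*\|=1$, which degenerates for pairs of small combined norm (there is also a closure in $\overline{{\rm conv}}$ to pass through, routine by compactness in finite dimensions). Those wrinkles could conceivably be handled by discarding pieces of small weight; the cancellation obstruction cannot. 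Your proposed fallback coincides with the authors' own marginal assessment that little can be said beyond convexity or regularity assumptions under which the Clarke and \Fr\ cones agree along the tails of the sequences --- exactly the regime in which, as you say, the decomposition step becomes vacuous. If you want a positive statement, that restricted version (convex or normally regular $\epi f$ and $\Omega$) is the one to formulate; as stated, the proposition should be regarded as open, and the cancellation phenomenon suggests looking for a counterexample built from a Clarke normal that is a nontrivial convex combination of limiting normals pointing in opposing directions.
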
	
\begin{proof}
Suppose that  $\dim X<+\infty$ and condition $(QC)_F$ is satisfed.
Then there is an $\varepsilon>0$ such that
$\|x_1^*+x_2^*\|\ge\eps$
for all integers $k>\eps\iv$, and points $(x_1,\mu_1)\in\epi f\cap B_\eps(x^k,\mu_0)$, $x_2\in\Omega\cap B_\eps(x^k)$, $(x_1^*,\nu_1)\in N^F_{\epi f}(x_1,\mu_1)$ and $x_2^*\in N^F_{\Omega}(x_2)$ such that $-\eps<\nu_1\le0$ and $\|x_1^*\|+\|x_2^*\|=1$.\\
Choose an $\varepsilon'\in(0,\varepsilon)$.
Take arbitrary integers $k>{\eps'}\iv$, $(x_1,\mu_1)\in\epi f\cap B_{\eps'}(x^k,\mu_0)$, $x_2\in\Omega\cap B_{\eps'}(x^k)$, $(x_1^*,\nu_1)\in N^C_{\epi f}(x_1,\mu_1)$ and $x_2^*\in N^C_{\Omega}(x_2)$ such that $-\eps'<\nu_1\le0$ and $\|x_1^*\|+\|x_2^*\|=1$.
\end{proof}
}
\NDC{18/2/24.
I have failed to prove $(QC)_F$ \folgt $(QC)_C$ in finite dimensions.
By Lemma~5.6, $x^*_1$ and $x^*_2$ (Clarke dual vectors) are limits of sequences $\{x_1^{*k}\}$ and $\{x_2^{*k}\}$, where each sequence is a convex combination of Fr\'echet dual vectors $x_1^{*k}(i_k)$ and $x_2^{*k}(j_k)$ with a property that
 $\|x_1^{*k}(i_k)+x_2^{*k}(j_k)\|>\varepsilon$ for any $i_k$ and $j_k$.
By Lemma 5.5, the sum sequence $x_1^{*k}+x_2^{*k}$ is a convex combination of vectors $x_1^{*k}(i_k)+x_2^{*k}(j_k)$.
In general, we do not have the lower estimate for $x_1^{*k}+x_2^{*k}$.
For example $(0,0)$ is a convex combination of $(-1,0)$ and $(1,0)$, but $\|(-1,0)\|=\|(1,0)\|=1$ and $\|(0,0)\|=0$.
}
}\fi
\if{
\NDC{26/01/25. Should the above claim be formulated explicitly? Or we mentioned  it in subsequent paper(s).}
\AK{28/01/25.
Corollary~\ref{C4.30}.}
\NDC{26/01/25. Should the convetional nonlinear programming problems with equality and inequality constraints be formulated.}
\AK{28/01/25.
Future work, item (i).}
}\fi

The next statement is a direct consequence of Corollary~\ref{C4.7}.

\begin{corollary}
\label{C4.30}
Suppose the assumptions of Theorem~\ref{T4.6} and condition $(QC)_C$ are satisfied.
Then assertion \eqref{C4.7.1} in Corollary~\ref{C4.7} holds true.
	
If $X$ is Asplund and condition $(QC)_F$ is satisfied,
then assertion \eqref{C4.7.1} in Corollary~\ref{C4.7} holds true with $N^F$ and $\sd^{F}$ in place of $N^C$ and $\sd^{C}$, respectively.
\end{corollary}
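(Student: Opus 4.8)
The plan is to treat Corollary~\ref{C4.7} as a dichotomy---under the hypotheses of Theorem~\ref{T4.6}, at least one of \eqref{C4.7.1} and \eqref{C4.7.2} must hold---and to observe that condition $(QC)_C$ is precisely the logical negation of the second alternative \eqref{C4.7.2}. Once that identification is made, ruling out \eqref{C4.7.2} leaves \eqref{C4.7.1} as the only possibility, which is exactly the desired conclusion. In this way the whole statement reduces to a short quantifier argument; no analytic input beyond Corollary~\ref{C4.7} is required, which is also why the paper calls it a direct consequence.

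First I would argue by contradiction, assuming both $(QC)_C$ and assertion \eqref{C4.7.2} hold. Let $\eps>0$ be the constant furnished by $(QC)_C$. Feeding this same $\eps$ into \eqref{C4.7.2} produces an integer $k>\eps\iv$ together with points $(x_1,\mu_1)\in\epi f\cap B_\eps(x^k,\mu_0)$, $x_2\in\Omega\cap B_\eps(x^k)$, $(x_1^*,\nu_1)\in N^C_{\epi f}(x_1,\mu_1)$ and $x_2^*\in N^C_{\Omega}(x_2)$ satisfying $-\eps<\nu_1\le0$, $\|x_1^*\|+\|x_2^*\|=1$ and $\|x_1^*+x_2^*\|<\eps$. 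These points lie exactly in the range quantified over by $(QC)_C$, so that condition forces $\|x_1^*+x_2^*\|\ge\eps$, contradicting $\|x_1^*+x_2^*\|<\eps$. Hence \eqref{C4.7.2} cannot hold, and the dichotomy of Corollary~\ref{C4.7} yields \eqref{C4.7.1}.

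The one point deserving attention is the matching of quantifiers: in \eqref{C4.7.2} the integer $k$ is produced existentially for the given $\eps$, whereas $(QC)_C$ imposes its bound on \emph{every} $k>\eps\iv$; since the $k$ delivered by \eqref{C4.7.2} does satisfy $k>\eps\iv$, and the side constraints $-\eps<\nu_1\le0$ and $\|x_1^*\|+\|x_2^*\|=1$ coincide verbatim, the two statements dovetail and the contradiction is genuine. I expect no real obstacle here, precisely because $(QC)_C$ was crafted as the negation of the singular alternative. For the Asplund case I would repeat the argument with $N^C$ and $\sd^C$ replaced throughout by $N^F$ and $\sd^F$: Corollary~\ref{C4.7} then supplies the Fr\'echet form of the dichotomy, $(QC)_F$ negates the Fr\'echet version of \eqref{C4.7.2}, and the same reasoning forces the Fr\'echet version of \eqref{C4.7.1}.
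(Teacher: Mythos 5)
Your proof is correct and is exactly the argument the paper intends: the paper states Corollary~\ref{C4.30} as a direct consequence of the dichotomy in Corollary~\ref{C4.7}, with $(QC)_C$ (resp.\ $(QC)_F$) being precisely the negation of alternative \eqref{C4.7.2} in its Clarke (resp.\ Fr\'echet) form. Your quantifier check---feeding the $\eps$ from $(QC)_C$ into \eqref{C4.7.2} and noting that the produced $k>\eps\iv$ and points fall within the range quantified over by $(QC)_C$---is the only substantive step, and you carried it out correctly in both cases.
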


Condition $(QC)_F$ is ensured by the transversality of the sets $\epi f$ and $\Omega\times\R$.

\begin{proposition}
\label{P4.10}
Let $\{x^k\}\subset\Omega$ and $\mu_0\in\R$.
If $\{\epi f,\Omega\times\R\}$ is transversal at $\{(x^k,\mu_0)\}$,
then condition $(QC)_F$ holds true.
\end{proposition}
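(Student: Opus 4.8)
The plan is to reduce $(QC)_F$ to the dual characterization of sequential transversality in Corollary~\ref{C3.12}, applied to the pair $\{\epi f,\Omega\times\R\}$ in the product space $X\times\R$ at the sequence $\{(x^k,\mu_0)\}$. Since only the ``only if'' direction (transversality $\Rightarrow$ dual bound) is needed, by Remark~\ref{R3.11} neither closedness of the sets nor the Asplund property of $X$ is required, which matches the hypotheses of Proposition~\ref{P4.10}. Thus transversality supplies an $\eps_0>0$ such that $\norm{w_1^*+w_2^*}\ge\eps_0$ for all integers $k>\eps_0\iv$ and all $(w_1,w_2)\in(\epi f)\times(\Omega\times\R)$ lying in $B_{\eps_0}((x^k,\mu_0),(x^k,\mu_0))$ and carrying unit-norm \Fr\ normals $(w_1^*,w_2^*)$. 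I would then set $\eps:=\eps_0/2$ and verify that this $\eps$ witnesses $(QC)_F$.

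The key step is a lifting construction. Given data as in $(QC)_F$ --- a point $(x_1,\mu_1)\in\epi f\cap B_\eps(x^k,\mu_0)$, a point $x_2\in\Omega\cap B_\eps(x^k)$, normals $(x_1^*,\nu_1)\in N^F_{\epi f}(x_1,\mu_1)$ and $x_2^*\in N^F_{\Omega}(x_2)$ with $-\eps<\nu_1\le0$ and $\|x_1^*\|+\|x_2^*\|=1$ --- I would form $\hat w:=((x_1,\mu_1),(x_2,\mu_0))$ and $\hat w^*:=((x_1^*,\nu_1),(x_2^*,0))$. Membership $\hat w\in(\epi f)\times(\Omega\times\R)$ is immediate since $(x_2,\mu_0)\in\Omega\times\R$, and the ball membership holds because $\eps\le\eps_0$ together with $\|x_2-x^k\|<\eps$ and $(x_1,\mu_1)\in B_\eps(x^k,\mu_0)$ give $\vertiii{\hat w-((x^k,\mu_0),(x^k,\mu_0))}<\eps\le\eps_0$. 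For the normal vector I would invoke the product rule for \Fr\ normal cones, noting that $N^F_{\Omega\times\R}(x_2,\mu_0)=N^F_{\Omega}(x_2)\times\{0\}$ (the $\R$-factor being the whole line contributes only the zero normal), so that indeed $\hat w^*\in N^F_{(\epi f)\times(\Omega\times\R)}(\hat w)$.

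It remains to handle the normalization and conclude. Using the maximum product norm of Section~\ref{S5} and its dual (sum) norm, one computes $\vertiii{\hat w^*}=(\|x_1^*\|+|\nu_1|)+\|x_2^*\|=1+|\nu_1|\in[1,1+\eps)$. Rescaling $\hat w^*$ by $\vertiii{\hat w^*}$ keeps it in the (conical) normal set and makes it of unit norm, so the transversality bound applies to $\hat w^*/\vertiii{\hat w^*}$ and yields $\|x_1^*+x_2^*\|+|\nu_1|\ge\eps_0(1+|\nu_1|)\ge\eps_0$. Hence $\|x_1^*+x_2^*\|\ge\eps_0-|\nu_1|>\eps_0-\eps\ge\eps_0/2=\eps$. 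The index requirement is compatible: $\eps\le\eps_0$ gives $\eps\iv\ge\eps_0\iv$, so any $k>\eps\iv$ already satisfies $k>\eps_0\iv$ and the transversality bound is available. This establishes $(QC)_F$.

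The main obstacle I anticipate is the mismatch between the two normalizations: $(QC)_F$ normalizes only the ``essential'' part $\|x_1^*\|+\|x_2^*\|=1$, whereas the transversality characterization normalizes the full dual vector, which also carries the components $\nu_1$ and $\nu_2$. The construction is engineered so that $\nu_2=0$ automatically (from the $\R$-factor) and $|\nu_1|<\eps$ is controlled by the restriction $-\eps<\nu_1\le0$; it is precisely this smallness that lets me absorb the discrepancy $\vertiii{\hat w^*}=1+|\nu_1|$ and still extract a strictly positive lower bound on $\|x_1^*+x_2^*\|$ after dividing by the correct constant. Getting the quantitative bookkeeping tight (the choice $\eps=\eps_0/2$ and the chain $\|x_1^*+x_2^*\|>\eps_0-\eps\ge\eps$) is the only delicate point; everything else is routine.
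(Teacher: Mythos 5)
Your proposal is correct and follows essentially the same route as the paper: both invoke the ``only if'' part of Corollary~\ref{C3.12} (via Remark~\ref{R3.11}, so no closedness or Asplund assumptions are needed), lift the $(QC)_F$ data to the pair $\{\epi f,\Omega\times\R\}$ using $N^F_{\Omega\times\R}(x_2,\mu_0)=N^F_\Omega(x_2)\times\{0\}$, rescale to meet the full normalization $\|x_1^*\|+\|x_2^*\|+|\nu_1|=1$, and use the smallness $|\nu_1|<\eps$ to extract the bound on $\|x_1^*+x_2^*\|$ alone. The only difference is bookkeeping: the paper scales by $\al:=1+|\nu_1|$ and keeps the transversality constant $\eps$ unchanged, whereas you work with the sum dual norm explicitly and take $\eps:=\eps_0/2$ to absorb $|\nu_1|$ --- an equally valid choice of constants.
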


\begin{proof}
Let $\{\epi f,\Omega\times\R\}$ be transversal at $\{(x^k,\mu_0)\}$.
If $(x^*,\nu)\in N^F_{\epi f}(x,\mu)$ for some $(x,\mu)\in\epi f$, then $\nu\le0$.
If $(x^*,\nu)\in N^F_{\Omega\times\R}(x,\mu)$ for some $(x,\mu)\in\Omega\times\R$, then $\nu=0$.
By Corollary~\ref{C3.12} and Remark~\ref{R3.11},
there is an $\varepsilon>0$ such that
$\max\{\|x_1^*+x_2^*\|,|\nu_1|\}\ge\eps$
for all integers $k>\eps\iv$, and points
$(x_1,\mu_1)\in\epi f\cap B_\eps(x^k,\mu_0)$, $x_2\in\Omega\cap B_\eps(x^k)$, $(x_1^*,\nu_1)\in N^F_{\epi f}(x_1,\mu_1)$ and $x_2^*\in N^F_{\Omega}(x_2)$ with
$\|x_1^*\|+\|x_2^*\|+|\nu_1|=1$.

Take any integer $k>\eps\iv$, and points
$(x_1,\mu_1)\in\epi f\cap B_\eps(x^k,\mu_0)$, $x_2\in\Omega\cap B_\eps(x^k)$, $(x_1^*,\nu_1)\in N^F_{\epi f}(x_1,\mu_1)$ and $x_2^*\in N^F_{\Omega}(x_2)$ such that $-\eps<\nu_1\le0$ and $\|x_1^*\|+\|x_2^*\|=1$.
Set $\al:=1+|\nu_1|$, $x_1'^*:=x_1^*/\al$, $x_2'^*:=x_2^*/\al$, $\nu_1':=\nu_1/\al$.
Then $(x_1'^*,\nu_1')\in N^F_{\epi f}(x_1,\mu_1)$, $x_2'^*\in N^F_{\Omega}(x_2)$ and
$\|x_1'^*\|+\|x_2'^*\|+|\nu_1'|=1/\al+|\nu_1|/\al=1$.
Hence, $\max\{\|x_1'^*+x_2'^*\|,|\nu_1'|\}\ge\eps$, and consequently, $\max\{\|x_1^*+x_2^*\|,|\nu_1|\}\ge\eps\al\ge\eps$.
Since $|\nu_1|<\eps$, the last inequality yields $\|x_1^*+x_2^*\|\ge\eps$.
Thus, condition $(QC)_F$ holds true.
\end{proof}

The transversality condition in Proposition~\ref{P4.10} is satisfied, for instance, if $f$ is Lipschitz continuous (near a tail of the sequence $\{x^k\}$)
or when (a tail of) the sequence $\{x^k\}$ lies in $\Int\Omega$.
\if{
\NDC{18/2/25.
I think it should be mentioned somewhere that both $(QC)_C$ and $(QC)_F$ hold true when $f$ is Lipschitz continuous (\cite{CuoKru25}).}
}\fi
It is not difficult to show that these conditions ensure both $(QC)_F$ and $(QC)_C$; cf. \cite{CuoKru25}.

The next example illustrates Corollary~\ref{C4.30}.
{It shows, in particular, that, similar to the classical analysis, sequential necessary conditions correspond to stationary sequences, which are not in general minimizing.}

\begin{example}
[Sequential necessary conditions]
{Let $\Omega=\R$, $f(x)=1/x$ for all $x\ne0$, ${f(0)=+\infty}$, and $\mu_0=0$.
The function $f$ is Lipschitz continuous on $(-\infty,1]\cup[1,+\infty)$.
We obviously have $N_\Omega^C(x)=N_\Omega^F(x)=\{0\}$ for all $x\in\Omega$,  $\sd^Cf(x)=\sd^Ff(x)=\{-1/x^2\}$ for all $x\ne0$, and $\sd^Cf(0)=\sd^Ff(0)=\es$.
Hence, given any $\eps>0$, conditions $|f(x)|<\eps$ and $0\in\sd f(x)+(-\eps,\eps)$ are trivially satisfied when $|x|>\max\{\eps\iv,1\}$.
Hence, assertion \eqref{C4.7.1} in Corollary~\ref{C4.7} holds true for any sequence $\{x_k\}\in\R$ with $|x_k|\to+\infty$.
The natural candidates are $x_k=1/k$ and $x_k=-1/k$ $(k\in\N)$.
(One could also consider mixtures of the two sequences, though it does not make much sense.)
Thus, conditions $(QC)_C$ and $(QC)_F$ are satisfied for both sequences.
They both are firmly inf-stationary at level $0$ in the sense of Definition~\ref{D4.3}, but only the first one is minimizing at level $0$ in the sense of Definition~\ref{D4.1}\,\eqref{D4.1.2}.}
\end{example}	

\begin{remark}
\begin{enumerate}
\item
The model studied in Section~\ref{S2} allows one to derive necessary optimality and stationarity conditions in more general than \eqref{P} optimization problems with functional and geometric constraints, and vector or set-valued objectives.
\item
{Similar to the classical necessary optimality conditions transforming the task of solving
an optimization problem to that of a system of equations,
the sequential necessary conditions in Theorem~\ref{T4.6} and its corollaries transform the task of finding a minimizing sequence to that of solving a generalized equation of the type \eqref{C4.7-1}.
This is not an easy job in general as with all sorts of ``fuzzy'' conditions in optimization.
Nevertheless, such conditions can still be helpful for testing particular sequences.
For other necessary and sufficient conditions for minimizing sequences, the reader is referred to \cite[Section~6]{HuaNgPen00} and the references therein.}
\end{enumerate}
\end{remark}

\section{Conclusions}
\label{Conclusions}

The sequential extremality (together with sequential versions of the related concepts of stationarity, approximate stationarity and transversality) of a finite collection of sets are studied.
The properties correspond to replacing a fixed point (extremal point) in the intersection of the sets by a collection of sequences of points in the individual sets with the distances between the corresponding points tending to
zero. This allows one to consider collections of unbounded sets with empty intersection.

The sequential extremal principle extending the conventional one is established in terms of Fréchet and Clarke normal cones.
This result can replace the conventional extremal principle when proving optimality, stationarity, transversality and regularity conditions, and calculus formulas in more general settings involving unbounded sets.
In this paper, as an illustration, it is used to derive sequential necessary conditions for minimizing (and more general firmly stationary, stationary and approximately stationary) sequences in a scalar optimization problem with a geometric constraint.

Other potential applications worth being studied:
\begin{itemize}
\item
sequential necessary optimality and stationarity conditions for optimization problems with scalar, vector and set-valued objectives and several functional and geometric constraints;
\item
sequential transversality and subtransversality properties of collections of sets;
\item
sequential metric regularity and subregularity of set-valued mappings;
\item
sequential error bounds of extended-real-valued functions;
\item
sequential qualification conditions;
\item
sequential extensions of limiting normal cones, subdifferentials and coderivatives, and their calculus.
\end{itemize}

\section*{Acknowledgments}

The authors would like to thank Professor Tien-Son Pham for attracting their attention to optimality concepts on unbounded sets and fruitful discussions.

{We thank the reviewers for their comments and suggestions which helped us improve the manuscript.}

\section*{Disclosure statement}
No potential conflict of interest was reported by the authors.

\section*{Funding}
Nguyen Duy Cuong is supported by Vietnam National Program for the Development of Mathematics 2021-2030 under grant number B2023-CTT-09.

\section*{ORCID}
Nguyen Duy Cuong http://orcid.org/0000-0003-2579-3601
\\
Alexander Y. Kruger http://orcid.org/0000-0002-7861-7380
\def\cprime{$'$} \def\cftil#1{\ifmmode\setbox7\hbox{$\accent"5E#1$}\else
  \setbox7\hbox{\accent"5E#1}\penalty 10000\relax\fi\raise 1\ht7
  \hbox{\lower1.15ex\hbox to 1\wd7{\hss\accent"7E\hss}}\penalty 10000
  \hskip-1\wd7\penalty 10000\box7} \def\cprime{$'$} \def\cprime{$'$}
  \def\cprime{$'$} \def\cprime{$'$} \def\cprime{$'$}
  \def\Dbar{\leavevmode\lower.6ex\hbox to 0pt{\hskip-.23ex \accent"16\hss}D}
  \def\cfac#1{\ifmmode\setbox7\hbox{$\accent"5E#1$}\else
  \setbox7\hbox{\accent"5E#1}\penalty 10000\relax\fi\raise 1\ht7
  \hbox{\lower1.15ex\hbox to 1\wd7{\hss\accent"13\hss}}\penalty 10000
  \hskip-1\wd7\penalty 10000\box7} \def\cprime{$'$}

\end{document}